\newtheorem{theorem}{Theorem}[section]
\newtheorem{lemma}[theorem]{Lemma}
\newtheorem{proposition}[theorem]{Proposition}
\theoremstyle{definition}
\newtheorem{definition}[theorem]{Definition}
\newtheorem{example}[theorem]{Example}
\newtheorem{remark}[theorem]{Remark}
\newtheorem{theoremalphabetic}{Theorem}
\crefname{ex}{Example}{Examples}
\crefname{thm}{Theorem}{Theorems} 
\crefname{lem}{Lemma}{Lemmas}
\crefname{prop}{Proposition}{Propositions}
\crefname{cor}{Corollary}{Corollaries} 
\crefname{con}{Conjecture}{Conjectures} 
\crefname{def}{Definition}{Definitions}
\Crefname{algocf}{Algorithm}{Algorithms}
\crefname{rmk}{Remark}{Remarks}
\crefname{thmalph}{Theorem}{Theorems}
\numberwithin{equation}{section}
\numberwithin{figure}{section}
\DeclareTextFontCommand{\bfemph}{\bfseries\em}
\newcommand{\term}{\bfemph}
\definecolor{specialblue}{rgb}{0.07,0.35,0.57}
\definecolor{specialred}{rgb}{0.7,0.2,0.2}
\pgfplotsset{compat=1.8}
\newcommand{\from}{{\colon}}
\DeclareMathOperator{\sign}{sign} 
\DeclareMathOperator{\row}{row} 
\DeclareMathOperator{\rk}{rk} 
\DeclareMathOperator{\im}{im} 
\DeclareMathOperator{\spn}{span} 
\DeclareMathOperator{\diag}{diag} 
\DeclareMathOperator{\supp}{supp} 
\newcommand{\1}{\mathbbm{1}} 
\renewcommand{\S}{\mathcal{S}}
\newcommand{\Y}{\mathcal{Y}}
\newcommand{\X}{\mathcal{X}}
\newcommand{\K}{\mathcal{K}}
\newcommand{\T}{\mathcal{T}}
\newcommand{\Z}{\mathcal{Z}}
\newcommand{\U}{\mathcal{U}}
\newcommand{\ZZ}{{\mathbb{Z}}}
\newcommand{\QQ}{{\mathbb{Q}}}
\newcommand{\RR}{{\mathbb{R}}}
\newcommand{\CC}{{\mathbb{C}}}
\newcommand{\GG}{{\mathbb{G}}}
\newcommand{\kk}{\mathbbm{k}}
\newcommand{\VV}{\mathbb{V}}
\newcommand{\I}{\mathcal{I}}
\newcommand{\cL}{\mathcal{L}}
\renewcommand{\k}{\kappa}
\title{Toric invariance of vertically parametrized systems}
\author{Elisenda Feliu and 
Oskar Henriksson}
\begin{document}

\begin{abstract}
We consider the problem of deciding whether the solution sets of a parametrized polynomial system are toric in the sense that they admit a monomial parametrization.
We focus on vertically parametrized systems, which are sparse systems where we allow linear dependencies between coefficients in front of the same monomial.
We give a matroid-theoretic characterization of the maximal-dimensional torus for which all solution sets are invariant under componentwise multiplication. 
Building on this, we provide necessary conditions and sufficient conditions for when the solution sets are unions of finitely many or a unique coset  of this torus.
The motivation of this work comes from the theory of reaction networks, where toric structure of the steady state system substantially simplifies the determination of multistationarity; here, we show that this is also the case for absolute concentration robustness and steady state invariants. 
\end{abstract}

\maketitle

\section{Introduction}
\label{sec:intro}

Toric varieties are central objects in algebraic geometry, and appear naturally in many applications, including polynomial system solving \cite{sottile:book,Telen2022}, statistics \cite{GeigerMeekSturmfels2006},  phylogenetics \cite{SturmfelsSullivant2005}, and chemical reaction network theory \cite{Craciun2009toric}. Much is known about the geometry of toric varieties \cite{Cox2011toric},
but effectively deciding whether a given variety is toric from an implicit description remains a hard problem, which has recently been approached from the point of view of, e.g., symbolic computation \cite{Grigoriev2020toricity} and Lie theory \cite{MarajPal2026,KahleVill2025}.

This paper focuses on deciding whether the \emph{positive} zero sets 
of a \emph{parametric} polynomial system have toric structure.
More precisely, 
for a parametric system $F\in\RR[\k_1,\ldots,\k_m,x_1^\pm,\ldots,x_n^\pm]^s$ (with parameters $\k$ and variables $x$)
we consider the set $\Z_{>0}$ of parameters $\k\in \RR^m_{>0}$ for which  $\VV_{>0}(F_\k)$ is nonempty. We say that $F$
is \term{toric} over $\RR_{>0}$   if 
there is an  exponent matrix $A\in\ZZ^{d\times n}$ such that  for all $\k\in \Z_{>0}$, $\VV_{>0}(F_\k)$ admits a monomial parametrization of the form
\[ \RR_{>0}^d\to\VV_{>0}(F_\k),\quad t\mapsto \alpha_\k\circ t^A\, \]
for some $\alpha_\k\in\RR_{>0}^n$ depending on $\k$, and where $\circ$ denotes componentwise multiplication.
In this case, we view $\VV_{>0}(F_\k)$ as a coset of the multiplicative subgroup $\T_A^{>0}=\{t^A:t\in\RR_{>0}^d\}\subseteq\RR^n_{>0}$ and write \[\VV_{>0}(F_\k)=\alpha_\k\circ\T_A^{>0}.\] Detecting {whether a system is toric in this sense}
can be formulated as a quantifier elimination problem \cite{Rahkooy2021parametric}, which gives a detection algorithm, albeit at a substantial computational cost. Another general but computationally intense approach is considered in \cite{sadeghimanesh2019grobner,conradi2019total,rahkooy2021comprehensive}, which give sufficient conditions in terms of Gröbner bases and comprehensive Gröbner systems. 

Here, we focus on the special case when $F$ is \term{vertically parametrized} in the language of \cite{HelminckRen2025,FeliuHenrikssonPascual2025vertical}, in the sense that it can be written as
\[F=C(\k\circ x^M)\, , \]
where each row of $C\in\RR^{s\times m}$ encodes a linear combination of $m$ monomials with exponents given by the columns of $M\in\ZZ^{n\times m}$ and scaled by the parameters $\k=(\k_1,\ldots,\k_m)$.

\begin{example}
\label{ex:IDH}
As a simple running example throughout the paper, we will use the following  vertically parametrized system: 
\[
F=\left[{\small\begin{array}{l}
-
\kappa_{{1}}x_{{1}}x_{{2}}+\kappa_{{2}}x_{{3}}+\kappa_{{3}}x_{{3}}\\
-\kappa_{{1}}x_{{1}}x_{{2}}+\kappa_{{2}}x_{{3}}+\kappa_{{6}}x_{{5}}\\
\kappa_{{4}}x_{{3}}x_{{4}}-\kappa_{{5}}x_{{5}}-\kappa_{{6}}x_{{5}}
\end{array}}\right],\quad  C={\small\begin{bmatrix}
-1&1&1&0&0&0\\
-1&1&0&0&0&1\\ 
0&0&0&1&-1&-1
\end{bmatrix}},\quad M= {\small\begin{bmatrix} 1&0&0&0&0&0\\ 1&0&0&0
&0&0\\ 0&1&1&1&0&0\\0&0&0&1&0&0\\ 0&0&0&0&1&1\end{bmatrix}}.
\]
It is not hard to see that {the system is toric} over $\RR_{>0}$ for the exponent matrix
\[A={\small\begin{bmatrix}1 & 0 & 1 & 0 & 1\\
0 & 1 & 1 & 0 & 1\end{bmatrix}}\  \in \ZZ^{2\times 5}, \]
as, for each $\k\in\RR^6_{>0}$, the positive zero locus admits the parametrization
\[\RR^2_{>0}\to\VV_{>0}(F_\k)\,,\quad
(t_1,t_2)\mapsto \left(t_1,  t_2, \tfrac{\kappa_{1}}{\kappa_{2} + \kappa_{3}}\, t_1 t_2, \tfrac{\kappa_{3} (\kappa_{5} + \kappa_{6})}{\kappa_{4} \kappa_{6}}, \tfrac{\kappa_{1} \kappa_{3} }{\kappa_{6} (\kappa_{2} + \kappa_{3})}\,t_1 t_2\right).\qedhere \] 
\end{example}

\smallskip

Vertically parametrized systems describe the steady states of reaction networks and the critical points of multivariate polynomials. They include sparse polynomial systems with fixed support, but the framework also allows for linear dependencies among the coefficients of the same monomial.  
Previous work on vertically parametrized systems has addressed the generic root count \cite{HelminckRen2025} and tropical homotopies for polynomial system solving \cite{tropicalhomotopy}. Algebraic-geometric properties such as dimension and nondegeneracy were treated in \cite{FeliuHenrikssonPascual2025vertical}, where it was shown that generic solvability, in the sense that
$\Z_{>0}$ has nonempty interior, is equivalent to the linear algebra condition
\begin{equation}\label{eq:Zp}
    \rk(C\diag(w)M^\top)=\rk(C) \:\:\text{for some $w\in\ker(C)$\,. }
\end{equation}

For reaction networks, knowing that the steady state system is toric over $\RR_{>0}$ is especially relevant, since it simplifies the analysis of key problems such as multistationarity. In this context, several \emph{sufficient} conditions to {guarantee that the steady state system $F$ is toric are known}. This includes binomiality of the ideal $\langle F_\k\rangle$ for all $\k$ \cite{Millan2012toricsteadystates,conradi2015detecting}, as well as the concept of \emph{complex-balanced equilibria} \cite{Horn1972general,Craciun2009toric}, and the related notions of deficiency \cite{Feinberg1995class,Boros2012multiple} and disguised toric dynamical systems \cite{Brustenga2022disguised}. 

In this work, we {first investigate}  
the \emph{necessary} condition that $F$ is \term{invariant}  with respect to $A$, meaning that $\VV_{>0}(F_\k)$ is invariant under componentwise multiplication by $\T_A^{>0}$ for each $\k\in\RR_{>0}^m$. 
Equivalently, each $\VV_{>0}(F_\k)$ is a union of (possibly zero or infinitely many) cosets of $\T_A^{>0}$.
{This property can be seen as the positive analog of being a \emph{T-variety} over the complex numbers \cite{AltmannIltenPetersenSussVollmert2012}.} 
When the union consists of finitely many cosets, we say that {the system is \term{locally toric}}. 
 See \Cref{fig:overview} for an illustration of these notions. 

\begin{figure}[b]
    \centering
    \begin{subfigure}{0.25\textwidth}
    \centering
    \includegraphics[height=9em]{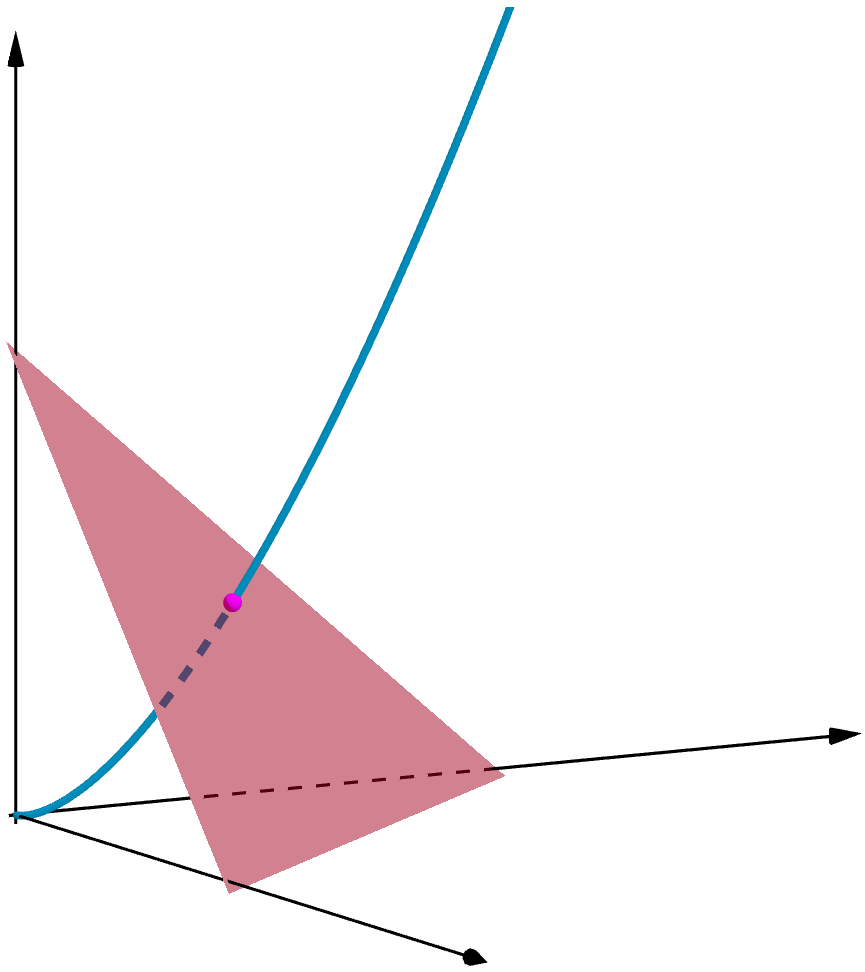}
    \caption{Single coset}
    \end{subfigure}
    \hspace{3em}
    \begin{subfigure}{0.25\textwidth}
    \centering
    \includegraphics[height=9em]{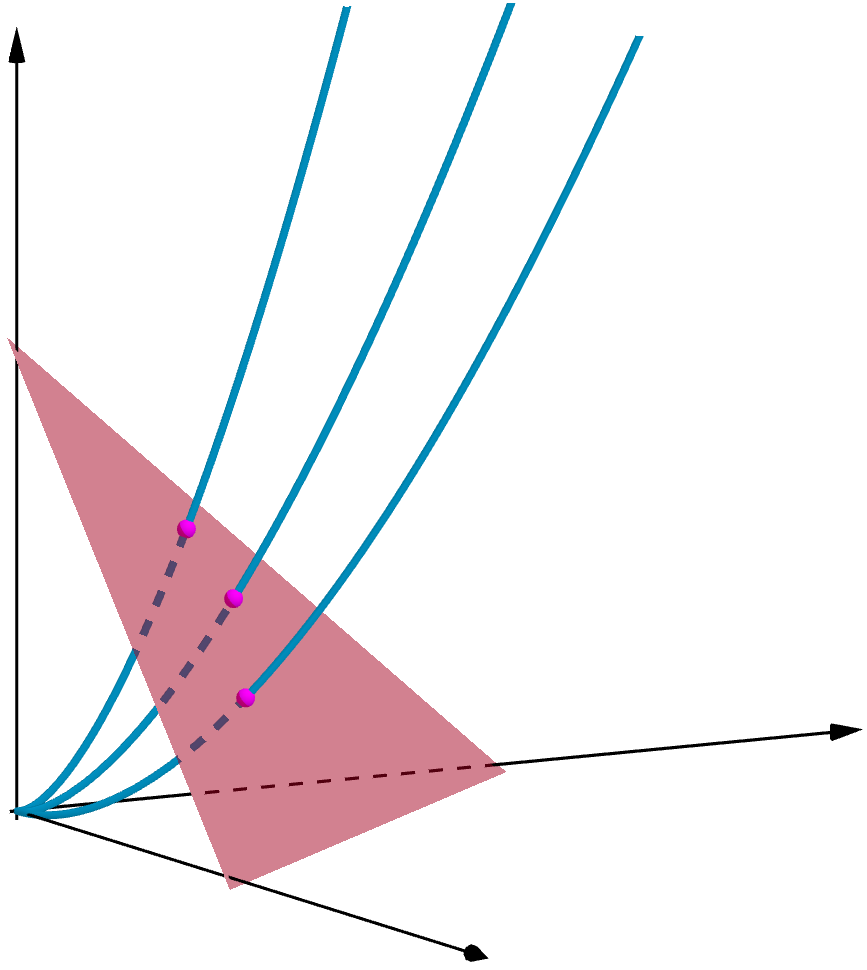}
    \caption{Finitely many cosets}
    \end{subfigure}
    \hspace{3em}
    \begin{subfigure}{0.25\textwidth}
    \centering
    \includegraphics[height=9em]{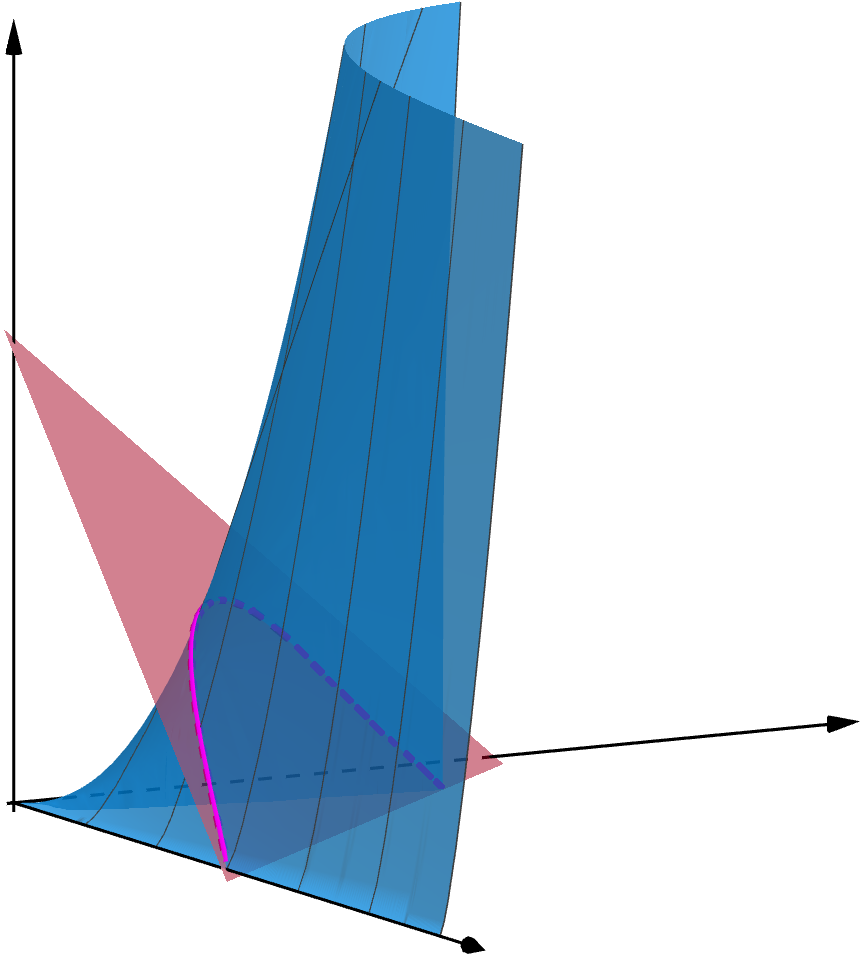}
    \caption{Infinitely many cosets}
    \end{subfigure}
    \caption{Three  invariant varieties (in  blue)  with respect to the exponent matrix $A=[\,1 \:\: 2 \:\: 3\,]$. The cosets are in bijection with the intersections with a parallel translate of the plane $\ker(A)$ (in red).}
    \label{fig:overview}
\end{figure}

Our approach to detecting and ruling out toric structure has the following three steps, the theoretical foundations of which are summarized in parts (i)-(iii) of \Cref{thmA:main_result}.
First, we determine the unique (up to row operations) maximal-rank matrix $A$ for which $\VV_{>0}(F_\k)$ is invariant for all $\k$ through the linear algebra condition in (i). Secondly, in part (ii), we characterize when the dimension of $\T_A^{>0}$ and of $\VV_{>0}(F_\k)$ generically agree, to determine whether $\VV_{>0}(F_\k)$ is generically a finite union of $\T_A^{>0}$-cosets. Importantly, part (i) and (ii) together give a way to rule out $F$ being toric. Finally, we derive a range of sufficient conditions for $\VV_{>0}(F_\k)$ to consist of a single coset. These conditions come from the observation that, by Birch's Theorem, each coset intersects each parallel translate of $\ker(A)$ precisely once, thus giving rise to the \emph{coset counting system} appearing in the disjoint union in (iii).

\begin{theoremalphabetic}[\Cref{thm:invariance_lifts_to_closure,thm:characterization_of_invariance,thm:characterization_generic_local_toricity} and \Cref{prop:coset_counting}]
\label{thmA:main_result}
Let $F=C(\k\circ x^M)$ be a vertically parametrized system defined by $M=[\,M_1\,M_2\cdots M_m\,]\in\ZZ^{n\times m}$ and $C\in\RR^{s\times m}$   with $\ker(C)\cap \RR_{>0}^m\neq\varnothing$, and let $A\in\ZZ^{d\times n}$.

\begin{enumerate}
\item[(i)] (Invariance) 
If $C'$ is the reduced row echelon form of $C$, then $F$ is invariant with respect to $A$ if and only if 
\[
    A(M_{i}-M_j)=0 \quad 
    \text{for all $i,j\in\supp(v)$ and all  rows $v$ of $C'$}.\]
\item[(ii)]   (Locally toric systems) 
Suppose $F$ is invariant with respect to $A$ and that \eqref{eq:Zp} holds. 
Then $\VV_{>0}(F_{\k})$ is a union of finitely many $\T_A^{>0}$-cosets for generic $\k\in \Z_{>0}$  if and only if 
\[{\rk(C)+\rk(A)=n\,.}\]
\item[(iii)] (Counting cosets) Suppose $F$ is invariant with respect to $A$. Then, for each $\k\in\RR^m_{>0}$ and each $b\in A(\RR^n_{>0})$, it holds that
\[\VV_{>0}(F_\k) = \bigsqcup_{\alpha \in \VV_{>0}(F_{\k},\, Ax-b)} \alpha\circ \T_A^{>0}\,.\]
\end{enumerate}
\end{theoremalphabetic}

Invariance is the topic of \Cref{sec:toricity,sec:invariance}. We prove part (i) by characterizing the stronger notion of invariance of the zero locus over $\CC^*$, and showing that invariance of the positive zero locus is equivalent to invariance of the complex zero locus. This is not true for general parametric systems; see \Cref{ex:reductionC}.  Notably, part (i) shows that invariance only depends on the \emph{column matroid} of $C$ (rather than the specific entries of $C$) and on the affine span of subsets of columns of $M$ determined by the \emph{connected components} of this matroid. In particular, invariance can only arise 
if the system, after putting $C$ in reduced row echelon form, has the property that each polynomial has exponent vectors belonging to a parallel translate of $\row(A)^\perp$.  This generalizes the well-known fact that a polynomial is quasihomogeneous if and only if its Newton polytope is contained in an affine hyperplane \cite[Ch.~6.1]{gelfand1994discriminants}. In informal terms, this means that invariance reflects that the system ``effectively depends on fewer than $n$ variables'' as discussed in \Cref{rem:true_dim}. This is not the case for arbitrary parametric systems; see \Cref{ex:true_dim}.

Part (ii) is the topic of \Cref{sec:local} and the coset counting system of part (iii) is discussed in \Cref{sec:counting}. Here, we also derive bounds on the number of cosets and sufficient conditions for $F$ to be toric.  These conditions are gathered in \Cref{alg:summary} and implemented in our Julia package \texttt{ToricVerticalSystems.jl}.   

In \Cref{sec:crnt_perspectives}, we study consequences of invariance and (local) toric structure in the context of reaction network theory and use our approach to provide new mathematical insights to previously established results. After recalling the basic terminology in \Cref{subsec:crnt_intro}, we study in  \Cref{subsec:crnt_applications} the implications of toric structure for three central concepts in reaction network theory: \emph{multistationarity}, \emph{absolute concentration robustness},  and \emph{steady state invariants}. In general, these are computationally  challenging properties to detect or rule out, but with (locally) toric structure for a known exponent matrix $A$,
the analysis simplifies substantially.

In \Cref{subsec:intermediates}, we introduce a model-reduction technique, based on the concept of intermediates {from \cite{Feliu2013intermediates,sadeghimanesh:multi}}, that preserves invariance. In \Cref{subsec:odebase}, we
apply our algorithms to biochemically relevant networks from the database ODEbase \cite{odebase2022} as a case study. The latter shows that our conditions are often enough to conclusively 
{determine whether the steady state system is toric}
for realistic networks.  

Finally, in \Cref{subsec:other_flavors}, we connect our results to {more specialized notions of toric structure} in the  literature: quasithermostatic networks (and the related notion of \emph{toric dynamical systems} of \cite{Craciun2009toric}), and networks with binomial steady state ideal (also known as  \emph{toric steady states} in \cite{Millan2012toricsteadystates}). In particular,  \Cref{thmA:main_result} provides an easy-to-check \emph{necessary} condition for both properties to hold in a region of parameter space with nonempty Euclidean interior.

\subsection*{Notation and conventions}
The cardinality of a set $S$ is denoted by $\# S$.
For   $n\in\ZZ_{>0}$, we  let $[n]=\{1,\ldots,n\}$. For a field $\kk$, we denote $\kk\setminus \{0\}$ by $\kk^*$. 
We write $\circ\from\kk^n\times\kk^n\to\kk^n$ for componentwise multiplication. 
For  $A\in\RR^{n\times m}$, we write $A_i$ for the $i$th column, and $A_{i*}$ for the $i$th row. 
The \emph{support} of $v\in\RR^n$ is the set $\supp(v)=\{ i\in [n] : v_i\neq 0\}$. 
For $x\in\RR^n_{>0}$, we let $x^{-1}$ be defined componentwise, and $x^A\in\RR^m$ be defined by $(x^A)_j=x_1^{a_{1j}}\cdots x_n^{a_{nj}}$ for $j\in [m]$.

\subsection*{Acknowledgements}
We thank two anonymous referees for helpful feedback on earlier versions of the manuscript.
This work has been supported by the Novo Nordisk Foundation, with grant reference  NNF20OC0065582, and by the European Union under the
Grant Agreement number 101044561, POSALG. Views and opinions expressed are those of the
authors only and do not necessarily reflect those of the European Union or European Research
Council (ERC). Neither the European Union nor ERC can be held responsible for them. 

\section{Vertically parametrized systems}
Throughout, we work with Laurent polynomials with coefficients in $\kk\in\{\RR,\CC\}$. 
We consider \term{vertically parametrized systems} (or \term{vertical systems} for short), which are parametric  systems of the form  
\begin{equation}
\label{eq:NkB}
F=C(\k\circ x^M)\in \kk[\k_1,\ldots,\k_m,x_1^\pm,\ldots,x_n^\pm]^s,
\end{equation}
consisting of $s\leq n$ polynomials with parameters $\k=(\kappa_1,\ldots,\kappa_m)$ and variables 
$x=(x_1,\ldots,x_n)$, encoded by a coefficient matrix $ C\in \kk^{s\times m}$ and an exponent matrix  $M \in \ZZ^{n\times m}$.
The component\-wise product $\k\circ x^M$ indicates that the monomial encoded by the $i$th column of $M$ is scaled by $\k_i$, while the rows of $C$ give  linear combinations of the scaled monomials. An important feature is that $F$ is linear in the parameters and that each parameter always accompanies the same monomial (though a monomial is accompanied by different parameters if $M$ has repeated columns).
We also consider \term{augmented vertically parametrized systems} of the form
\[\left(C(\k\circ x^M),\, Lx-b\right)\in \kk[\k,b,x^\pm]^{s+d}, \qquad s+d \leq n\,,\]
where we also allow for $d\geq 0$ affine linear equations, encoded by a coefficient matrix $L\in\kk^{d\times n}$ 
and parametric constant terms $b=(b_1,\ldots,b_d)$. Geometrically, this corresponds to intersecting the variety given by the vertical system $C (\k \circ x^M)$ by a parallel translate of $\ker(L)$. 

For an augmented vertical system $F$, we denote the specialization at $(\k,b)\in\kk^{m+d}$ by 
\[F_{\k,b}=F(\k,b,\cdot) \in\kk[x^\pm]^{s+d}\, . \]
{Although our primary interest is in the positive zeros of $F$, it is convenient to work in a more general setting.  We will therefore consider the set of zeros over a multiplicative group of scalars $\GG\in \{ \RR_{>0},\RR^*,\CC^*\}$, and the corresponding solvability locus, denoted by } 
\begin{equation*}
\VV_{\GG}(F_{\k,b})=\{x \in\GG^n : F_{\k,b}(x)=0 \}\,, \qquad 
\Z_{\GG}=\{(\k,b)  \in\GG^m\times \kk^d : \VV_{\GG}(F_{\k,b}) \neq \varnothing \}\,.
\end{equation*}
We implicitly consider the ground field to be $\kk = \RR$ if $\GG$ is $\RR^*$ or $\RR_{>0}$, and   $\kk=\CC$ if $\GG=\CC^*$.  We will often abbreviate ``$\RR_{>0}$'' by ``${>\!0}$'' in superscripts and subscripts.

Key results from  \cite{FeliuHenrikssonPascual2025vertical} on generic dimension and nondegeneracy for augmented vertical systems  are reviewed next, as they play an important role later on. 

For an augmented vertical system $F\in \kk[\k,b,x^\pm]^{s+d}$, we  say that a zero $x\in \GG^n$ of $F_{\k,b}$    is \term{nondegenerate}  if the Jacobian $J_{F_{\k,b}}(x)$ has rank $s+d$.  A  nondegenerate zero of $F_{\k,b}$ is in particular a nonsingular point of  $\VV_{\CC^*}(F_{\k,b})$. 
We recall that $\dim(\VV_{\CC^*}(F_{\k,b}))\geq n-s-d$ for all $(\k,b)\in(\CC^*)^m\times \CC^d$ such that $\VV_{\CC^*}(F_{\k,b})\neq\varnothing$, 
and if in addition, all irreducible components contain a nondegenerate zero, then  $\VV_{\GG}(F_{\k,b})$ has pure dimension $n-s-d$  (cf. \cite[§9.6,~Thm.~9]{cox2015ideals}). 

A summary of the main conclusions from \cite{FeliuHenrikssonPascual2025vertical} is given in the next proposition. 

\begin{proposition}\label{prop:vertical}
Let $\GG \in\{ \RR_{>0},\RR^*,\CC^*\}$ and $F\in \kk[\k,b,x^\pm]^{s+d}$ be an augmented vertical system with defining matrices  $M\in\ZZ^{n\times m}$, $C\in \kk^{s\times m}$ of rank $s$ with $\ker(C)\cap \GG^m \neq \varnothing$, and $L\in\kk^{d\times n}$.  
Consider the rank condition
\begin{equation}\label{eq:nondeg_conditionA}
\rk\begin{bmatrix}C\diag(w)M^\top\diag(h)\\ L \end{bmatrix}= s+d \quad  \text{for some } w\in \ker(C) \text{ and }h\in (\CC^*)^n.
\end{equation}
Then one of the following two scenarios occurs:
\begin{itemize}
\item[(i)] If \eqref{eq:nondeg_conditionA} holds, then 
$\Z_{\GG}$ has nonempty Euclidean interior. Furthermore, 
there exists a nonempty Zariski open subset $\U$ of $\Z_{\GG}$ such that for all $(\k,b)\in \U$, it holds that 
\[ \dim(\VV_{\CC^*}(F_{\k,b}))=\dim(\VV_{\GG}(F_{\k,b}))=n-s-d\]  
and that all zeros of $F_{\k,b}$ in $(\CC^*)^n$ are nondegenerate. If in addition \eqref{eq:nondeg_conditionA} holds for all $w\in \ker(C)\cap \GG^m$ and $h\in \GG^n$, then we can take $\U=\Z_\GG$.
\item[(ii)] If \eqref{eq:nondeg_conditionA} does not hold, then $\Z_{\GG}$ is contained in a hypersurface, and, for all $(\k,b)\in \Z_{\GG}$, it holds that $\dim(\VV_{\CC^*}(F_{\k,b}))>n-s-d$ and that all zeros of $F_{\k,b}$ in $(\CC^*)^n$ are degenerate. 
\end{itemize}
\end{proposition}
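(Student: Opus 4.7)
The plan is to parametrize $\Z_\GG$ by an explicit polynomial map whose Jacobian is controlled by the rank condition \eqref{eq:nondeg_conditionA}, and to deduce both cases from standard facts (the open mapping theorem, generic smoothness, and semicontinuity of rank). Fix a basis $B\in\kk^{m\times(m-s)}$ of $\ker(C)$, and observe that $(\k,b,x)$ solves $F_{\k,b}(x)=0$ precisely when $w:=\k\circ x^M\in\ker(C)\cap\GG^m$ and $b=Lx$. Hence the image of
\[\Phi\from(\ker(C)\cap\GG^m)\times\GG^n\to \GG^m\times\kk^d,\qquad (w,x)\mapsto (w\circ x^{-M},\,Lx),\]
is exactly $\Z_\GG$. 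Writing $w=Bu$, the Jacobian of $\Phi$ takes the block form
\[J_\Phi=\begin{bmatrix}\diag(x^{-M})B & -\diag(\k)M^\top\diag(x^{-1})\\ 0 & L\end{bmatrix}\in\kk^{(m+d)\times((m-s)+n)}.\]

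The core computational step is to establish the identity $\rk(J_\Phi)=(m-s)+\rk(N(w,x))$ for
\[N(w,x):=\begin{bmatrix}C\diag(w)M^\top\diag(x^{-1})\\ L\end{bmatrix}.\]
This is obtained by computing the left null space of $J_\Phi$: a left null vector $(u,v)\in\kk^m\times\kk^d$ must annihilate the first block column, forcing $u=y^\top C\diag(x^M)$ for some $y\in\kk^s$ (using that the annihilator of $\col(B)=\ker(C)$ is $\row(C)$, together with the fact that $C$ has full row rank $s$); substituting into the constraint from the second block column reduces to $(-y^\top,v)$ being a left null vector of $N(w,x)$, yielding the identity. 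With $h=x^{-1}$, $N(w,x)$ coincides with the matrix in \eqref{eq:nondeg_conditionA}, and at any zero $(\k,b,x)$ of $F$ it equals the augmented Jacobian $J_{F_{\k,b}}(x)$. Hence $J_\Phi$ has full row rank $m+d$ at $(w,x)$ iff \eqref{eq:nondeg_conditionA} holds at $(w,x^{-1})$, and when it does, the corresponding zero $x$ of $F_{\k,b}$ is nondegenerate.

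Under the hypothesis of case (i), the full-rank locus of $N$ is a nonempty Zariski open subset of $\ker(C)\times(\CC^*)^n$, and therefore meets the Zariski dense subset $(\ker(C)\cap\GG^m)\times\GG^n$ (density uses $\ker(C)\cap\GG^m\neq\varnothing$). So $\Phi$ is a submersion on a nonempty Euclidean open subset $W$ of its domain; the open mapping theorem gives $\Z_\GG\supseteq\Phi(W)$ with nonempty Euclidean interior, and generic smoothness applied to the projection of the incidence variety $V=\{(\k,b,x):F_{\k,b}(x)=0\}$ onto $\Z_\GG$ produces a Zariski open $\U\subseteq\Z_\GG$ whose fibers have the expected dimension $n-s-d$ and, by the rank identity, consist of nondegenerate zeros. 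If in addition \eqref{eq:nondeg_conditionA} holds at every $(w,h)\in(\ker(C)\cap\GG^m)\times\GG^n$, Zariski density upgrades this to all of $\ker(C)\times(\CC^*)^n$, so every complex zero has full-rank Jacobian and one may take $\U=\Z_\GG$. In case (ii), the rank identity forces $\rk(J_\Phi)<m+d$ everywhere, so $\Z_\GG=\im(\Phi)$ is contained in a proper subvariety, hence in a hypersurface, of $\GG^m\times\kk^d$; moreover every zero $x$ of $F_{\k,b}$ satisfies $\rk J_{F_{\k,b}}(x)<s+d$, making $x$ singular on $\VV_{\CC^*}(F_{\k,b})$, which by generic smoothness of irreducible complex varieties forces every component through $x$ to have dimension $>n-s-d$. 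The principal obstacle is the clean left-kernel computation yielding the rank identity, which pivots on identifying the annihilator of $\ker(C)$ with $\row(C)$; the rest is a standard interplay between open mapping, generic smoothness, and fiber dimension.
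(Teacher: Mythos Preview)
The paper does not give a self-contained proof of this proposition; it simply cites results from \cite{FeliuHenrikssonPascual2025vertical}. Your approach via the explicit parametrization $\Phi$ and the rank identity $\rk(J_\Phi)=(m-s)+\rk N(w,x)$ is the natural one, and your derivation of that identity through the left-kernel computation is correct and clean. Two steps, however, do not go through as written.

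\medskip
\textbf{The ``in addition'' clause in (i).} Your claim that ``Zariski density upgrades this to all of $\ker(C)\times(\CC^*)^n$'' is false: a Zariski closed rank-drop locus can be nonempty while still missing the Zariski dense subset $(\ker(C)\cap\GG^m)\times\GG^n$. For a concrete failure take $C=(1,-1)$, $M=\left[\begin{smallmatrix}4&0\\0&2\end{smallmatrix}\right]$, $L=(1,1)$, $\GG=\RR_{>0}$: then $\det N((t,t),h)=4th_1+2th_2>0$ for all $t,h_1,h_2>0$, yet $\det N((1,1),(1,-2))=0$. In fact, for $\kappa_1=\kappa_2=1$ and $b=\tfrac14$ one checks that $(\kappa,b)\in\Z_{>0}$ (there is a positive zero near $(0.207,0.043)$) while $x=(\tfrac12,-\tfrac14)\in(\RR^*)^2$ is a \emph{degenerate} zero of $F_{\kappa,b}$. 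So your argument cannot establish the nondegeneracy of \emph{all} complex zeros for every $(\kappa,b)\in\Z_\GG$; what it does give (and what the applications in the paper actually use, cf.\ the proof of \Cref{prop:nondeg}) is that every zero in $\GG^n$ is nondegenerate, since for $x\in\GG^n$ one has $(w,h)=(\kappa\circ x^M,x^{-1})\in(\ker(C)\cap\GG^m)\times\GG^n$.

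\medskip
\textbf{The dimension bound in (ii).} The implication ``$\rk J_{F_{\kappa,b}}(x)<s+d$ $\Rightarrow$ $x$ is a singular point of $\VV_{\CC^*}(F_{\kappa,b})$'' is not valid: a degenerate zero need not be a singular point of the underlying reduced variety (e.g.\ $f=x^2$ at $x=0$), so your appeal to generic smoothness of components does not yield the conclusion. The correct route stays with your map $\Phi_\CC$: since $\rk J_{\Phi_\CC}<m+d$ everywhere on the irreducible domain of dimension $(m-s)+n$, one has $\dim\overline{\im\Phi_\CC}\le m+d-1$, and then the fiber-dimension theorem gives, for every $(\kappa,b)\in\im\Phi_\CC=\Z_{\CC^*}$,
\[
\dim \VV_{\CC^*}(F_{\kappa,b})\;\ge\;(m-s)+n-(m+d-1)\;=\;n-s-d+1\;>\;n-s-d.
\]
This both yields the hypersurface containment of $\Z_\GG$ and the strict dimension inequality for all fibers, replacing your singular-locus argument.
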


\begin{proof}
This follows from Theorem 3.7, together with Propositions 2.11, 3.2 and 3.11, and Remark~3.6 in   \cite{FeliuHenrikssonPascual2025vertical}. 
\end{proof}

In view of \Cref{prop:vertical}, we say that 
an augmented vertical system $F\in \kk[\k,b,x^\pm]^{s+d}$   is \term{nondegenerate over $\GG$} if $\ker(C)\cap \GG^m \neq \varnothing$ and \eqref{eq:nondeg_conditionA} holds. This is equivalent to $F$ having a nondegenerate zero in $\GG$ for some parameter choice and also to 
 $\Z_{\GG}$ being Zariski dense in $\CC^m$. 
 If \eqref{eq:nondeg_conditionA} does not hold, then we say that $F$ is \term{degenerate}. 
Observe that nondegeneracy of $F$ over $\GG$ is equivalent to nondegeneracy over $\CC^*$, as long as $\ker(C)\cap \GG^m \neq \varnothing$. 

When $F$ is a vertical system, condition  \eqref{eq:nondeg_conditionA} reduces to
\begin{equation}\label{eq:nondeg_condition}
\rk(C \diag(w) M^\top)=s \quad \text{for some }w\in \ker(C)\,.
\end{equation}

\begin{example}
For the vertical system in \Cref{ex:IDH}
and $w=(2,1,1,2,1,1) \in \ker(C)\cap \RR^6_{>0}$, 
the matrix in \eqref{eq:nondeg_condition} becomes
\[C \diag(w) M^\top ={\small \left[\begin{array}{ccccc}
-3 & -3 & 3 & 0 & 0 
\\
 -3 & -3 & 1 & 0 & 2
\\
 0 & 0 & 3 & 3 & -3 
\end{array}\right]},\]
which has  rank $3$. Hence $F$ is nondegenerate, $\Z_{>0}$ has nonempty Euclidean interior, and $\VV_{>0}(F_\k)$ has  dimension $2$ for generic $\k\in \Z_{>0}$. 
\end{example}

\begin{remark}[Freely parametrized systems]
\label{rmk:freely_parametrized_systems}
As discussed in \cite[§3.6]{FeliuHenrikssonPascual2025vertical}, vertical systems  include \term{freely parametrized systems}, obtained by fixing the support and letting all coefficients vary freely. Given support sets $\mathcal{S}_1,\ldots,\mathcal{S}_s\subseteq\ZZ^n$, the corresponding freely parametrized system is the vertical system given by
\begin{equation*}
    C = \begin{bmatrix}
    C_1 & \dots & 0 \\ \vdots & \ddots & \vdots \\ 0 & \dots & C_s
    \end{bmatrix}\quad \text{with}\quad C_i=[ 1 \ \cdots\ 1] \in \CC^{1\times \#\mathcal{S}_i}, \quad\text{and}\quad M= [ M_1 \ \cdots \ M_s]\,,
 \end{equation*}  
where the columns $M_i\in \ZZ^{n\times \#\mathcal{S}_i}$ are the elements of $\mathcal{S}_i$ (in some fixed order). 
Restricting to $\GG=\RR_{>0}$ allows us to consider systems with fixed support and coefficients with fixed sign by specifying the signs of the $C_i$ in the construction above (coefficients of free sign are included by repeating the monomial with opposite signs). For instance, the vertical system
\begin{equation}
\label{eq:triangle}
F=(\kappa_1-\kappa_2)x_1^3x_2^2+\kappa_3x_2^4-2\kappa_4x_1^6\quad \in\RR[\kappa_1,\kappa_2,\kappa_3,\kappa_4,x_1^\pm,x_2^\pm]
\end{equation}
consisting of a single polynomial
can be thought of as a generic system with support $x_1^3x_2^2$, $x_2^4$ and $x_1^6$, where the coefficient of $x_1^3x_2^2$ may  take arbitrary signs, and the coefficients of $x_2^4$ and $x_1^6$ are fixed to $+$ and $-$, respectively. (We will revisit this system in \Cref{ex:trianglenetwork_revisited}.)
\end{remark}

\section{{Toric invariance} and cosets}
\label{sec:toricity}

In this section, we define the various forms of {toric invariance} that we study in this work.

\begin{definition}\label{def:postorus}
The  \term{$\GG$-torus} and \term{torus}  associated with $A\in\ZZ^{d\times n}$ are, respectively, 
\[\T_A^\GG=\{t^A: t \in \GG^d\}\subseteq\GG^n, \qquad \T_A=\T_A^{\CC^*} =\{t^A: t \in (\CC^*)^d\}\subseteq\CC^n  \,.\]
\end{definition}

We view $\T_A^{\GG}$ as a multiplicative subgroup of $\GG^n$ and consider the cosets $\alpha\circ\T_A^{\GG}$ for $\alpha\in\GG^n$.
Note that the exponent matrix $A$ can be replaced by any other matrix with the same row lattice $\row_{\ZZ}(A)$ (and over $\CC^*$ and $\RR_{>0}$, it suffices that the row spans over $\QQ$ agree). 
Also note that over $\CC^*$, a coset $\alpha\circ\T_A$ is the same as a \emph{very affine scaled toric variety}. 
 
We now use cosets to define the three notions of invariance illustrated in \Cref{fig:overview}.

\begin{definition}
\label{def:toricity}
For $A\in\ZZ^{d\times n}$, $\GG\in \{ \RR_{>0},\RR^*,\CC^*\}$, and a set $X\subseteq\GG^n$, we say that:
\begin{itemize}
    \item  $X$  is \term{$\T_A$-invariant over $\GG$} if $X\circ\T_A^{\GG}\subseteq X$ 
    (i.e., $x\circ t^A\in X$ for all $x\in X$ and  $ t\in\GG^d$). In this case, $X$ is a union of $\T_A^{\GG}$-cosets. We denote the set of these cosets by $X/\T_A^{\GG}$. 
    \item $X$ is \term{locally $\T_A$-toric over $\GG$} if $X$ is  
    $\T_A$-invariant over $\GG$ and $1\leq\#(X/\T_A^{\GG})<\infty$.
    \item $X$ is \term{$\T_A$-toric over $\GG$}  if it is $\T_A$-invariant over $\GG$ and  $\#(X/\T_A^{\GG})=1$.
\end{itemize}
\end{definition}

\begin{definition}
\label{def:toricity_vertical}
For $A\in\ZZ^{d\times n}$, $\GG\in \{ \RR_{>0},\RR^*,\CC^*\}$, and a vertical system $F\in \kk[\k,x^\pm]^s$, we say:
\begin{itemize}
    \item $F$ is \term{$\T_A$-invariant over $\GG$} if $\VV_{\GG}(F_\k)$ is $\T_A$-invariant over $\GG$ for all $\k\in \GG^m$, that is, 
\begin{equation}\label{eq:toric_explained}
F_\k(x\circ t^A)=0, \qquad \text{for all}\quad \k\in \GG^m,\  x\in\VV_{\GG}(F_\k), \text{ and } t\in\GG^d\,.
\end{equation}

\item $F$ is \term{(generically) (locally) $\T_A$-toric over $\GG$}  if  $F$ is nondegenerate over $\GG$ and $\VV_{\GG}(F_\k)$ is (locally) $\T_A$-toric over $\GG$ (generically) for  $\k\in \Z_\GG$.  
\end{itemize}
If $\GG$ is omitted, it is implicitly assumed that $\GG=\CC^*$. 
 \end{definition}

\begin{example}

The vertical system $F$ in \Cref{ex:IDH} satisfies $\Z_{>0}=\RR^6_{>0}$ and is $\T_A$-toric over $\RR_{>0}$ for the matrix $A$ given in the example.  For this particular example,  one can find a rational function $\alpha_\k$ in $\k$ for which it holds that $\VV_{>0}(F_\k)=\alpha_\k\circ\T_{A}^{>0}$. However, this is not required in our definition of $F$ being $\T_A$-toric, and such a choice of $\alpha_\k$ might not exist (cf. \Cref{ex:trianglenetwork_revisited}). 
\end{example}

\begin{example}
\label{ex:square_network}
The vertical system defined by the polynomial
\[ F =-2\kappa_{1}x_{1}^9 - \kappa_{2}x_{1}^3x_{2}^4 + 2\kappa_{3}x_{2}^6 + 2\kappa_{4}x_{1}^6x_{2}^2\,\]
is  $\T_A$-invariant over $\RR_{>0}$ for 
$A=\begin{bmatrix}2 & 3\end{bmatrix}$. {It is not $\T_A$-toric, as 
 the number of $\T_A^{>0}$-cosets of $\VV_{>0}(F_\k)$ can be larger than one}: there are three for $\k=(0.01,3,1,1)$ and one for $\k=(0.01,1,1,1)$; see \Cref{fig:toric_components2}. {It is, however, locally $\T_A$-toric, as it is clear from the defining polynomial that $\VV_{>0}(F_\k)$ is 1-dimensional for all $\k\in\RR_{>0}^4$, meaning that the number of cosets is always finite.}
\end{example}

\begin{example}
\label{ex:infinitely_many_cosets}
Consider the following vertical system with $s=1$:
\[F=-\kappa_{1} x_{1}^{2} x_{3}+\kappa_{2} x_{1} x_{2}^{2}+\kappa_{3} x_{2}^{2} x_{3}\quad \in\RR[\kappa_1,\kappa_2,\kappa_3,x_1^\pm,x_2^\pm,x_3^\pm]\,.\]
Since $F_\k$ is homogeneous in $x$ for all $\k\in\RR^3_{>0}$, it is clear that $\VV_{>0}(F_\k)$ is a ruled surface, which in our language implies that $F$ is $\T_A$-invariant over $\RR_{>0}$ for $A=\begin{bmatrix}1&1&1\end{bmatrix}$ and that $\#(\VV_{>0}(F_\k)/\T_A^{>0})=\infty$.  
Hence $F$ is not locally $\T_A$-toric.
\end{example}

In the light of \Cref{def:toricity_vertical}, we take the following two-step approach to {decide whether a vertical  system $F$ is toric over $\GG$}: 
\begin{enumerate}
    \item Find a maximal-rank matrix $A$ such that $F$ is $\T_A$-invariant over $\GG$, i.e., satisfies \eqref{eq:toric_explained}. Under mild assumptions, this comes down to a linear algebra condition, which we discuss in \Cref{sec:invariance}.
    \item Bound $\#(\VV_{\GG}(F_\k)/\T_A^{\GG})$ by using concepts from 
    polyhedral and real algebraic geometry; 
       this is the topic of \Cref{sec:local,sec:counting}.
\end{enumerate}

\medskip
A key observation is that the study of  invariance can   be reduced to the case $\GG=\CC^*$.

\begin{theorem}
\label{thm:invariance_lifts_to_closure}
Let $\GG\in \{ \RR_{>0},\RR^*,\CC^*\}$  and consider a vertical system $F\in \kk[\k,x^\pm]^s$ as in \eqref{eq:NkB}. Let $\K\subseteq\GG^m$ be a Euclidean open set such that $\VV_{\GG}(F_\k)\neq\varnothing$ for some $\k\in\K$. For $A\in\ZZ^{d\times n}$, the following are equivalent:
\begin{enumerate}[label=(\roman*)]
\setlength{\itemsep}{0.2em}
    \item $\VV_{\GG}(F_\k)$ is $\T_A$-invariant over $\GG$ for all $\k\in \K$. 
    \item   $F$ is $\T_A$-invariant over $\GG$. 
    \item $F$ is $\T_A$-invariant (over $\CC^*$). 
\end{enumerate}
\end{theorem}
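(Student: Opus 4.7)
The plan is to establish the cycle of implications (iii) $\Rightarrow$ (ii) $\Rightarrow$ (i) $\Rightarrow$ (iii). The first two are essentially bookkeeping: for (iii) $\Rightarrow$ (ii), one notes that for $\k\in\GG^m$, $x\in\VV_\GG(F_\k)$ and $t\in\GG^d$, the product $x\circ t^A$ lies in $\GG^n$ and satisfies $F(\k,x\circ t^A)=0$ by (iii), hence lies in $\VV_\GG(F_\k)$; the implication (ii) $\Rightarrow$ (i) is immediate since $\K\subseteq\GG^m$. The real content lies in (i) $\Rightarrow$ (iii), where I must promote an invariance statement valid only on a Euclidean open set of $\GG$-parameters to a Zariski identity on the full complex vertical variety.

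For (i) $\Rightarrow$ (iii), my first move is to separate the $t$-dependence from the $(\k,x)$-dependence. Using $(x\circ t^A)^M=x^M\circ t^{AM}$ together with the linearity of $F$ in $\k$, one can expand
\[F_i(\k,x\circ t^A)=\sum_{e\in E}t^e\,H^i_e(\k,x),\qquad H^i_e(\k,x):=\sum_{j:\,(AM)_j=e}C_{ij}\,\k_j\,x^{M_j},\]
where $E$ denotes the set of distinct columns of $AM$. For each fixed $(\k,x)$ with $\k\in\K$ and $x\in\VV_\GG(F_\k)$, hypothesis (i) makes this a Laurent polynomial in $t$ vanishing on $\GG^d$; Zariski density of $\GG^d$ in $(\CC^*)^d$ then forces each coefficient $H^i_e(\k,x)$ to be zero. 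The problem thus reduces to showing that each $H^i_e$ vanishes on the whole complex vertical variety $\{(\k,x)\in(\CC^*)^{m+n}:F(\k,x)=0\}$, not only on $\{(\k,x):\k\in\K,\,x\in\VV_\GG(F_\k)\}$.

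The key trick for this extension is to reparametrize the complex vertical variety via $\Phi(x,w)=(x^{-M}\circ w,\,x)$, a biregular isomorphism from $(\CC^*)^n\times(\ker(C)\cap(\CC^*)^m)$ onto $\VV_{\CC^*}(F)$; the source (and hence target) is irreducible because $\ker(C)\cap(\CC^*)^m$ is a nonempty Zariski open of the linear space $\ker(C)$. A short computation shows that $H^i_e\circ\Phi$ simplifies to the \emph{linear} function $\widetilde{H}^i_e(w)=\sum_{j:\,(AM)_j=e}C_{ij}w_j$, which no longer depends on $x$. Choosing a witness $(\k_0,x_0)$ to the hypothesis and setting $w_0=\k_0\circ x_0^M\in\ker(C)\cap\GG^m$, openness of $\K$ together with continuity of $\Phi$ ensures that a Euclidean neighborhood of $(x_0,w_0)$ inside $\GG^n\times(\ker(C)\cap\GG^m)$ maps into $\{(\k,x):\k\in\K,\,x\in\VV_\GG(F_\k)\}$. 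Projecting this neighborhood onto the $w$-coordinate yields a Euclidean open subset of $\ker(C)\cap\GG^m$ on which $\widetilde{H}^i_e$ vanishes; since $\widetilde{H}^i_e$ is linear and $\ker(C)\cap\GG^m$ is Zariski dense in $\ker(C)$ (using that $C$ has real entries), we conclude $\widetilde{H}^i_e\equiv 0$ on $\ker(C)$. Pulling back through $\Phi$ gives $H^i_e\equiv 0$ on $\VV_{\CC^*}(F)$, and substituting back into the expansion above yields (iii).

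The step I anticipate as the main obstacle is the density argument in the preceding paragraph: one must verify uniformly for $\GG\in\{\RR_{>0},\RR^*,\CC^*\}$ both that the chosen Euclidean neighborhood really maps into the ``good'' set of parameters in $\K$, and that its projection onto $w$ is Zariski dense in the complex linear space $\ker(C)$. These rely only on standard facts about Euclidean open subsets of real forms of complex varieties being Zariski dense, but they need some care, since $\GG=\RR^*$ decomposes into multiple sign components and the assumption $\ker(C)\cap\GG^m\neq\varnothing$ is critical both for producing a witness $w_0$ and for the irreducibility of $\VV_{\CC^*}(F)$.
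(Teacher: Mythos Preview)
Your proof is correct, and it takes a genuinely different route from the paper's. The paper argues directly with Zariski closures: it introduces the incidence variety $\E=\{(\k,x):F(\k,x)=0\}\subseteq(\CC^*)^{m+n}$, invokes an external smoothness result for $\E$ to deduce that the Euclidean open set $\E\cap(\K\times\GG^n)$ is Zariski dense in $\E$, and then applies the continuous action map $(t,(\k,x))\mapsto(\k,x\circ t^A)$ together with a one-line closure chase to get $\Phi((\CC^*)^d\times\E)\subseteq\E$. By contrast, you unpack the $t$-dependence explicitly, isolate the coefficients $H^i_e$, and then use the biregular parametrization $(x,w)\mapsto(x^{-M}\circ w,x)$ to reduce the vanishing problem to a statement about \emph{linear} functionals on $\ker(C)$, for which density of a Euclidean open subset of $\ker(C)\cap\GG^m$ in $\ker_\CC(C)$ is completely elementary. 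Your approach is more hands-on but also more self-contained: it avoids citing the smoothness theorem and the general fact about real points of smooth complex varieties, and the decomposition by level sets of $AM$ that you introduce here is exactly the mechanism behind the later characterization of the toric invariance group (\Cref{thm:characterization_of_invariance}). The paper's approach, on the other hand, is shorter and would generalize more readily to settings where no convenient global parametrization of $\E$ is available.
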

\begin{proof}
The implications (iii) $\Rightarrow$ (ii) $\Rightarrow$ (i) are clear. For the implication (i) $\Rightarrow$ (iii), form the complex incidence variety
\[\I=\{(\k,x)\in(\CC^*)^m\times(\CC^*)^n:F_\k(x)=0\}\,, \]
{which by \cite[Thm.~3.1]{FeliuHenrikssonPascual2025vertical} is irreducible and smooth.}
Taking Zariski closures in the complex torus, we have 
$\overline{\GG^d}=(\CC^*)^d$ and $\overline{\I\cap(\K\times\GG^n)}=\I$; this is obvious in the complex case, whereas in the real case, {we use that
a Euclidean open subset of the real part of  an irreducible smooth complex variety defined by polynomials with real coefficients is dense} (see \cite[Thm.~6.5]{Pascualescudero2022local} and \cite[Prop.~3.3.6]{BCR}). 
With this in place, for 
the map
\[\Phi\from (\CC^*)^d\times\I\to(\CC^*)^m\times(\CC^*)^n\,,\quad ( t,(\k,x))\mapsto(\k,  x\circ t^A )\,.\]
we have, using (i), that
\begin{align*}
\Phi((\CC^*)^d\times\I)&=\Phi\left(\overline{\GG^d\times (\I\cap(\K\times\GG^n))}\right)
\subseteq \overline{\Phi\left(\overline{\GG^d\times (\I\cap(\K\times\GG^n))}\right)} \\
&=\overline{\Phi(\GG^d\times (\I\cap(\K\times\GG^n)))}
\subseteq\overline{\I\cap(\K\times\GG^n)} 
\subseteq \I\,,
\end{align*}
which is equivalent to (iii) by \eqref{eq:toric_explained}. 
\end{proof}

For common parameter sets $\K$, checking that $\VV_{\GG}(F_\k)\neq\varnothing$ for some $\k\in\K$ boils down to a simple computation, as the next lemma indicates. 

\begin{lemma}
\label{lem:positive_kernel_nonempty}
Let $\GG\in \{ \RR_{>0},\RR^*,\CC^*\}$ and  $F$ be a vertical system defined by matrices $M\in\ZZ^{n\times m}$ and $C\in\kk^{s\times m}$ of rank $s$. {Let $\K\subseteq \GG^m$. }
We have that (i) $\Rightarrow$ (ii) for the following statements:
\begin{enumerate}[label=(\roman*)]
    \item $\VV_{\GG}(F_\k)\neq\varnothing$ for some $\k\in\K$.
    \item $\ker(C)\cap\GG^m\neq\varnothing$.
\end{enumerate}
Furthermore, if $\K$ is such that 
\begin{equation}\label{eq:enough_parameters}
\ker(C)\cap\GG^m\subseteq\{\k\circ x^M:\k\in\K,x\in\GG^n\}\,,
\end{equation}
then it also holds that (ii) $\Rightarrow$ (i).
In particular, (i) $\Leftrightarrow$ (ii) when $\K=\GG^m$.
\end{lemma}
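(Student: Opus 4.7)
The plan is to unpack the definition of a vertical system directly, since both implications follow immediately from the relation $F_\k(x)=C(\k\circ x^M)$, combined with the fact that $\GG^n$ is closed under componentwise multiplication and under the monomial map $x\mapsto x^M$.

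For the implication (i) $\Rightarrow$ (ii), I would argue as follows. Suppose $x\in\VV_{\GG}(F_\k)$ for some $\k\in\K\subseteq\GG^m$. Then $C(\k\circ x^M)=0$, i.e., $\k\circ x^M\in\ker(C)$. Because $\GG$ is a multiplicative group, $x^M\in\GG^m$, and hence $\k\circ x^M\in\GG^m$. Thus $\k\circ x^M$ is an element of $\ker(C)\cap\GG^m$, which is therefore nonempty.

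For the reverse implication under the additional assumption \eqref{eq:enough_parameters}, suppose $w\in\ker(C)\cap\GG^m$. By \eqref{eq:enough_parameters}, we can write $w=\k\circ x^M$ for some $\k\in\K$ and $x\in\GG^n$. Then
\[F(\k,x)=C(\k\circ x^M)=Cw=0,\]
so $x\in\VV_{\GG}(F_\k)$, and in particular $\VV_{\GG}(F_\k)\neq\varnothing$ for this $\k\in\K$. For the ``in particular'' claim, it suffices to verify \eqref{eq:enough_parameters} when $\K=\GG^m$: given $w\in\ker(C)\cap\GG^m$, take $\k=w\in\GG^m=\K$ and $x=(1,\ldots,1)\in\GG^n$, so that $x^M=(1,\ldots,1)\in\GG^m$ and $\k\circ x^M=w$.

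There is no real obstacle here; the statement is essentially a bookkeeping lemma ensuring that the existence of a positive kernel element of $C$ is both necessary and (for sufficiently rich parameter sets) sufficient for the parametric variety to be nonempty. The only mildly subtle point is that \eqref{eq:enough_parameters} is genuinely needed for (ii) $\Rightarrow$ (i): without it, $\K$ might avoid all parameter values for which a positive kernel element is realized as $\k\circ x^M$.
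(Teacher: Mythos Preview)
Your proof is correct and follows essentially the same approach as the paper's own proof: both directions use the identity $F_\k(x)=C(\k\circ x^M)$ directly, and the ``in particular'' is handled by taking $\k=w$ and $x=(1,\ldots,1)$.
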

\begin{proof}
(i)$\Rightarrow$(ii): If $x\in\VV_{\GG}(F_\k)$ for  $\k\in\K$, then $\k\circ x^M \in\ker(C)\cap\GG^m$. 
(ii)$\Rightarrow$(i): Let $v\in\ker(C)\cap\GG^m$. By \eqref{eq:enough_parameters}, there exists $\k\in\K$ and $x\in\GG^n$, such that $\k\circ x^M=v$, which implies $x\in \VV_{\GG}(F_\k)$.
Finally if $\K=\GG^m$, for any $v\in \ker(C)\cap\GG^m$, 
we have $v=v\circ \1^M$, with $\1$ the vector of all ones.
\end{proof}

\begin{remark}\label{rmk:equivalent_descriptions_of_cosets}
  {Over $\GG=\RR_{>0}$}, it is a well-known fact (see, e.g., \cite{Feinberg1995class}, \cite[§3.2]{Muller2015injectivity}) that
\[\alpha\circ\T_{A}^{>0}=\{x \in\RR^n_{>0}:\log(x)-\log(\alpha)\in\ker(A)\}=\{x\in\RR^n_{>0}:x^B=\alpha^B\}\,,\]
where $B\in\ZZ^{n\times(n-\rk(A))}$ is a matrix whose columns form a basis for $\ker_{\QQ}(A)$. Cosets of this form are  sometimes called \emph{log-parametrized} sets in the context of reaction networks \cite{Hernandez2023independent} and \emph{log-linear} or \emph{log-affine} sets in the context of algebraic statistics, see, e.g., \cite[Ch.~6--7]{Sullivant2018}.
\end{remark}

\begin{remark}
\label{rmk:binomial_implies_toric}
When  $\GG=\RR_{>0}$, any ideal $I\subseteq\RR[x]$ generated by binomials $ax^u-bx^v$ for $a,b\in\RR_{>0}$ and $u,v\in\ZZ_{\geq 0}^n$ gives a toric zero locus $\VV_{>0}(I)$ (see, e.g., \cite[Prop.~5.2]{conradi2019total}). 
When $M\in\ZZ_{\geq 0}^{n\times m}$, a computational sufficient condition for 
a vertical system $F$ {to be \emph{generically} toric} over $\RR_{>0}$ is therefore that a reduced Gröbner basis for the ideal $\langle F\rangle\subseteq\RR(\k)[x]$ is binomial. If the binomial Gröbner basis specializes for each $\k\in\RR^m_{>0}$, the conclusion can be strengthened to {$F$ being toric} 
\cite[Rk.~2.4]{sadeghimanesh:multi}. A particularly simple special case where we can immediately assert {that $F$ is toric} 
is when putting $C$ in reduced row  echelon form gives a matrix where all rows have supports of size 2.
\end{remark}

We conclude the section with some additional examples to illustrate some subtleties of the concepts defined above.

\begin{example}
To see that {a system can be generically toric but not toric},
 consider the linear vertical system 
\[F = \left[\begin{array}{l}
\kappa_{1} x_{1} - \k_2 x_2 \\
\kappa_{1} x_{1} - \k_3 x_2
\end{array}\right]\quad \in \CC[\kappa_1,\k_2,\k_3,x_1^\pm,x_2^\pm]^2.\]
As the zero set is generically one point, $F$ is generically $\T_A$-toric with $A$ the empty matrix. However, when $\k_2=\k_3=1$, the number of $\T_A$-cosets is infinite as $\VV_{\CC^*}(F_\k)$ is one dimensional. 
\end{example}

\begin{example}
\label{ex:different_dependencies}
The linear dependencies among the coefficients of a vertically parametrized system play an important role in whether the system {is toric}. Consider   the following two systems, with the same support but different dependencies:
\[\left[{\small\begin{array}{l}-3 \kappa_{1} x_{1}^6 + 3 \kappa_{2} x_{1}^3 x_{2}^2  + 3 \kappa_{3} x_{2}^4 - \kappa_{4} x_{1} + \kappa_{5} x_{3}^5\\
\kappa_{1} x_{1}^6 - \kappa_{2} x_{1}^3 x_{2}^2 - \kappa_{3} x_{2}^4 + \kappa_{4} x_{1} - \kappa_{5} x_{3}^5\end{array}}\right]\quad
\text{and}
\quad
\left[{\small\begin{array}{l} -\kappa_{1} x_{1}^6 + \kappa_{2} x_{1}^3 x_{2}^2 + \kappa_{3} x_{2}^4  - \kappa_{4} x_{1} + \kappa_{5} x_{3}^5\\
\kappa_{6} x_{1}^6 - \kappa_{7} x_{1}^3 x_{2}^2 - \kappa_{8} x_{2}^4  + \kappa_{9} x_{1} - \kappa_{10} x_{3}^5\end{array}}\right]\,.\]
The first system {is $\T_A$-toric} over $\RR_{>0}$ for $A=[\,10\ \ 15\ \ 2\,]$, whereas the second  is not toric.
\end{example}

\begin{example}\label{ex:reductionC}
A key ingredient in the proof of
\Cref{thm:invariance_lifts_to_closure} is that the incidence variety of a vertical system is irreducible. Failure of this condition may give rise to systems displaying invariance over $\RR_{>0}$ but not over $\CC^*$. A simple example  is the system with one entry
\[ F = (\k_1 x_1 - \k_2 x_2)(\k_3x_1^2+\k_4x_2^4)\,,\]
which is $\T_A$-invariant over $\RR_{>0}$ for $A=\begin{bmatrix}1 & 1\end{bmatrix}$, but $\VV_{\CC^*}(F_\k)$ consists of three connected components, only one of which  has $\T_A$-invariance.
\end{example}

\begin{remark}
We only focus on the zeros with nonzero or positive coordinates as the behavior on the coordinate hyperplanes might vary drastically. If such solutions are of interest, one may perform a systematic case-by-case analysis, where different combinations of variables are set to zero, and the resulting system is studied with the methods of this paper. 
\end{remark}

\section{Characterization of toric invariance}
\label{sec:invariance}

We now give a characterization of toric invariance of a vertically parametrized system $F$.
The key object is the \term{toric invariance lattice}, defined as
\begin{equation*}
\cL_F =\left\{a \in\ZZ^{n}: F \text{ is }\T_{a}\text{-invariant}\right\}\,,
\end{equation*}
which is an abelian subgroup of $\ZZ^n$. In the language of toric geometry, $\cL_F$ is the \emph{cocharacter lattice} of the maximal subtorus of $(\CC^*)^n$ that stabilizes all of the varieties $\VV_{\CC^*}(F_\k)$. In concrete terms, we obtain the unique maximal-dimensional torus $\T_A$ for which the system $F$ is invariant by taking the rows of $A$ to be any $\ZZ$-basis for $\cL_F$.

\begin{proposition}
\label{prop:characterization_of_invariance}
Let $\GG\in \{ \RR_{>0},\RR^*,\CC^*\}$, $A\in\ZZ^{d\times n}$, and $F$ be a vertical system 
such that $\Z_{\GG}\neq \varnothing$. 
Then $F$ is $\T_A$-invariant over $\GG$ if and only if $\row_{\ZZ}(A)\subseteq \cL_F$.
\end{proposition}
\begin{proof}
This is a direct consequence of \Cref{thm:invariance_lifts_to_closure}, \Cref{lem:positive_kernel_nonempty}, and the discussion above. 
\end{proof}

A {stronger} condition for a row vector $a\in\ZZ^n$ to belong to $\cL_F$ is that  every entry of $F$ is \term{quasihomogeneous} with weights given by $a$, in the sense that there exists $b\in\ZZ^{s}$ such that
\[{F_\k(x\circ t^a)=  F_\k(x)  \circ t^b \,\quad \text{holds in } \CC[ t^\pm,x^\pm]\, \text{ for all $\k\in (\CC^*)^m$}.}\]

\begin{lemma}\label{lem:quasihomogeneous}
  A vertical system $F$ with defining matrices $M\in\ZZ^{n\times m}$ and $C\in \CC^{s\times m}$ of  rank $s$ is quasihomogeneous with weights given by a row vector $a\in\ZZ^n$  if and only if $a$ is perpendicular to the affine hull of the Newton polytope of each of the $s$ polynomials in $F$: 
\begin{equation}
\label{eq:condition_for_quasihomogeneity}
    a\, M_j=a\, M_{j_0} \quad \text{for all }j,j_0\in\supp(C_{i*}) \text{ and all }i\in[s]\,.
\end{equation}
\end{lemma}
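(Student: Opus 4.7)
The plan is to expand both sides of the defining equation $F(x\circ t^a)=F(x)\circ t^b$ componentwise and then exploit the algebraic independence of the parameters $\kappa_1,\ldots,\kappa_m$ (viewed as indeterminates) to compare coefficients. Fix $i\in[s]$; the $i$th component of the left-hand side is
\[F_i(x\circ t^a)=\sum_{j=1}^m C_{ij}\,\kappa_j\,x^{M_j}\,t^{aM_j},\]
where $aM_j\in\ZZ$ denotes the pairing of the row $a\in\ZZ^{1\times n}$ with the column $M_j$, while the $i$th component of the right-hand side equals $F_i(x)\,t^{b_i}=\sum_{j=1}^m C_{ij}\,\kappa_j\,x^{M_j}\,t^{b_i}$.

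The quasihomogeneity identity for $F_i$ is therefore equivalent to
\[\sum_{j=1}^m C_{ij}\,\kappa_j\,x^{M_j}\,(t^{aM_j}-t^{b_i})=0\qquad\text{in } \kk[\kappa][t^\pm,x^\pm].\]
Since the monomials $\kappa_j\,x^{M_j}$ indexed by $j\in[m]$ are linearly independent over $\kk[t^\pm]$ (the $\kappa_j$'s are distinct indeterminates, even if columns of $M$ repeat), each term must vanish, forcing $aM_j=b_i$ for every $j\in\supp(C_{i*})$. This proves both directions: for the \emph{only if} part, $a$ takes the constant value $b_i$ on $\{M_j:j\in\supp(C_{i*})\}$, which is exactly \eqref{eq:condition_for_quasihomogeneity}; for the \emph{if} part, given \eqref{eq:condition_for_quasihomogeneity}, I would define $b_i$ to be the common value of $aM_j$ on $\supp(C_{i*})$ (choosing $b_i$ arbitrarily if $C_{i*}=0$), and the same display shows that this $b\in\ZZ^s$ witnesses quasihomogeneity.

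Finally, the geometric reformulation is immediate: condition \eqref{eq:condition_for_quasihomogeneity} is equivalent to saying that $a$ annihilates every difference $M_j-M_{j_0}$ with $j,j_0\in\supp(C_{i*})$, and these differences span the linear subspace parallel to the affine hull of the Newton polytope of $F_i$. No substantial obstacle is anticipated; the only step requiring care is the coefficient comparison, which I would carry out in the enlarged ring $\kk[\kappa][t^\pm,x^\pm]$, rather than after specializing $\kappa$, so that the monomials $\kappa_j\,x^{M_j}$ remain linearly independent irrespective of whether columns of $M$ coincide.
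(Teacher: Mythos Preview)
Your argument is correct. The paper does not actually supply a proof of this lemma; it just cites \cite[Prop.~6.1.2(a)]{gelfand1994discriminants}. What you have written is precisely the standard direct verification underlying that reference, carried out in the ring $\kk[\kappa][t^\pm,x^\pm]$ so that the coefficient comparison goes through even when $M$ has repeated columns. One tiny remark: since $C$ has full row rank $s$, no row $C_{i*}$ is zero, so the parenthetical about choosing $b_i$ arbitrarily is never needed.
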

\begin{proof}
  Follows from  \cite[Prop.~6.1.2(a)]{gelfand1994discriminants}.
\end{proof}
    
\Cref{lem:quasihomogeneous}  {gives a sufficient condition for invariance, which depends on the  supports of the rows of $C$. We might obtain additional sufficient conditions by performing row operations  on $C$, as these preserve $\VV_{>0}(F_\k)$ but may change the support of the rows.  The main   message of this section is that these sufficient conditions, relying on quasihomogeneity, together completely characterize $\T_a$-invariance. In practice, it is enough to consider quasihomogeneity
when $C$ is in reduced row echelon form. }

The structure of $\ker(C)$ is captured by the  column matroid $\mathcal{M}_C$ of $C$, which is the matroid on $[m]$  whose circuits are the minimal nonempty index sets of linearly dependent columns of $C$.

\begin{definition}
The \term{matroid partition} defined by a matrix $C\in\CC^{s\times m}$ is the partition on $[m]$ whose blocks are the connected components of the column matroid $\mathcal{M}_C$ of $C$. In other words, the blocks  are the equivalence classes of the equivalence relation $\sim$ on $[m]$ generated by
 \[j\sim j_0 \quad \text{if }j,j_0 \in c \quad  \text{ for some  circuit $c$ of $\mathcal{M}_C$}\, .\]
 \end{definition}

By \cite[Thm.~5.2.1]{Welsh1976} the blocks of the matroid partition are also    the connected components of the {dual matroid of $\mathcal{M}_C$}, whose circuits are the minimal supports of nonzero vectors in $\row(C)$. 
The following easy lemma tells us that we do not need to find all circuits  to determine the matroid partition: a basis consisting of vectors with minimal support, for example given by the rows of $C$ if it is in  reduced row echelon form, suffices.
We include the proof for completeness.

Note that  a nonzero vector $v\in V$ in a vector space $V$ having
\emph{minimal support} means that the zero vector is the only vector of $V$ with support strictly contained in $\supp(v)$.

\begin{lemma}
\label{lem:partition_computable_from_generating_set}
Let $C\in\CC^{s\times m}$  and $\mathcal{V}$ be 
  a basis of $\ker(C)$ or of $\row(C)$ consisting of vectors of minimal support. 
The blocks of the matroid partition are the equivalence classes of the equivalence relation $\sim$  on $[m]$ generated by
\begin{equation}\label{eq:partition_small}
    j\sim j_0 \quad \text{if }j,j_0 \in \supp(v) \quad \text{ for some  $v\in \mathcal{V}$}\, .
 \end{equation}
\end{lemma}
\begin{proof}
Let $V=\row(C)$ or $V=\ker(C)$ such that $\mathcal{V}=\{v_1,\dots,v_\ell\}$ is a basis of $V$. 
For a nonzero vector  $w\in V$ of minimal support, let $\rho$ be a block
of the  partition defined  by \eqref{eq:partition_small} such that $\supp(w)\cap \rho\neq \varnothing$. It is enough to show that $\supp(w)\subseteq \rho$. 
As  $\mathcal{V}$ is a basis of $V$, we can write 
\[ w = \sum_{\supp(v_j) \subseteq \rho} \lambda_j v_j+ \sum_{\supp(v_j) \cap  \rho =\varnothing} \lambda_j v_j\,. \]
The two summands have disjoint index sets and define vectors in $V$. 
As $w$ has minimal support and the first summand is nonzero,  the second summand must vanish. 
In particular, 
$\supp(w)\subseteq \rho$ as desired.
\end{proof}

\begin{remark}
\label{rmk:other_partitions}
The matroid partition has previously appeared in the reaction networks literature under the term \emph{the finest independent decomposition} \cite[Thm.~3.3]{hernandez2021}, referring to Feinberg's notion of  \emph{independent decompositions} of a network \cite{FEINBERG19872229}.  
\end{remark}

\begin{remark}\label{rem:intersection}
If 
we write the matroid partition as $[m]=\rho_1\sqcup \dots \sqcup \rho_{\theta}$ and for $v\in \ker(C)$, we let $v^{(i)}\in \CC^m$, for all $i\in [\theta]$, be the unique vectors with $v=v^{(1)}+\dots + v^{(\theta)}$ and $\supp(v^{(i)}) \subseteq \rho_i$, then  $v^{(i)} \in \ker(C)$ as well for all $i$. In other words, 
$\ker(C)$ is, after a suitable coordinate permutation,  the direct {product} of the kernels of the matrices $C^{(i)}$ obtained from $C$ by considering the columns  with indices in $\rho_i$.
In particular    
\[{\VV_{\CC^*}(F_\k)= \VV_{\CC^*}(F_\k^{(1)}) \cap \dots \cap \VV_{\CC^*}(F_\k^{(\theta)})\,}\]
where the vertical system $F^{(i)}$ is defined by considering $C^{(i)}$ and the exponent matrix $M^{(i)}$ defined analogously.
 \end{remark}

\begin{theorem}
\label{thm:characterization_of_invariance}
Let $F$ be a vertical system with defining matrices $M\in\ZZ^{n\times m}$ and $C\in \CC^{s\times m}$ of  rank $s$ with $\ker(C)\cap (\CC^*)^m\neq\varnothing$. 
Let  $[m]=\rho_1 \sqcup \dots \sqcup \rho_\theta$ be its  matroid partition and $\cL_F$ the toric invariance lattice.
Then $a\in\cL_F$ if and only if $a M_{j}=a M_{j_0}$ for all $j,j_0\in \rho_i$ and all $i=1,\dots,\theta$.
Equivalently, it holds that
\[\cL_F=\bigcap_{i=1}^\theta \spn_{\ZZ}\{M_j-M_{j_0}:j,j_0\in\rho_i\}^\perp\, . \]
\end{theorem}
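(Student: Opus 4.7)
The plan is to translate the polynomial statement of $\T_a$-invariance into a torus-representation condition on $\ker(C)$ and then extract the combinatorial condition on matroid partition blocks via circuit vectors. First I would compute
\[
F(\kappa, x\circ t^a) \;=\; C\diag(t^{aM})(\kappa\circ x^M), \qquad (t^{aM})_j = t^{aM_j}.
\]
Since the map $(\kappa,x)\mapsto \kappa\circ x^M$ sends $(\CC^*)^m\times(\CC^*)^n$ onto $(\CC^*)^m$, and the hypothesis $\ker(C)\cap(\CC^*)^m\neq\varnothing$ makes this intersection Zariski dense in $\ker(C)$, the $\T_a$-invariance of $F$ is equivalent to $\diag(t^{aM})y \in \ker(C)$ for all $y\in\ker(C)$ and $t\in\CC^*$. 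Writing $y = \sum_\lambda y^{(\lambda)}$, where $y^{(\lambda)}$ is the restriction of $y$ to the index set $\{j : aM_j = \lambda\}$, gives $\diag(t^{aM})y = \sum_\lambda t^\lambda y^{(\lambda)}$; a Vandermonde argument on the coefficients of $t$ then rewrites the condition as: $v^{(\lambda)}\in\ker(C)$ for every $v\in\ker(C)$ and every integer $\lambda$. Equivalently, $\ker(C)$ is the direct sum of its weight subspaces under this one-parameter torus action.

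For the \emph{if} direction, I would apply \Cref{lem:partition_computable_from_generating_set} to fix a basis $w_1,\ldots,w_s$ of $\row(C)$ consisting of circuit vectors, and let $C'$ be the matrix with those rows. Then $\row(C') = \row(C)$, so $F' = C'(\kappa\circ x^M)$ has the same zero locus as $F$ for every $\kappa$, and the two systems are $\T_a$-invariant simultaneously. Because each $\supp(w_k)$ sits inside a single block $\rho_i$, the hypothesis $aM_j = aM_{j_0}$ for $j,j_0\in\rho_i$ makes $j\mapsto aM_j$ constant on each row support of $C'$. By \Cref{lem:quasihomogeneous}, $F'$ is quasihomogeneous with weights $a$, hence $\T_a$-invariant, and so is $F$.

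For the \emph{only if} direction, I would take any circuit vector $v\in\ker(C)$. The weight decomposition gives $v^{(\lambda)}\in\ker(C)$ with $\supp(v^{(\lambda)})\subseteq\supp(v)$; minimality of the support of a circuit vector forces $v^{(\lambda)} = 0$ or $\supp(v^{(\lambda)}) = \supp(v)$, and since the nonzero components have pairwise disjoint supports, only one is nonzero. Thus $v$ is itself a weight vector and $j\mapsto aM_j$ is constant on $\supp(v)$. Because the matroid partition is the transitive closure of the relation ``lying in the support of a common circuit vector of $\ker(C)$'' (\Cref{lem:partition_computable_from_generating_set}), $j\mapsto aM_j$ is constant on each block $\rho_i$. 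The reformulation $\I = \bigcap_i \spn_{\ZZ}\{M_j-M_{j_0} : j,j_0\in\rho_i\}^\perp$ is then immediate.

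The main obstacle is reformulating polynomial invariance as linear invariance of $\ker(C)$ under the one-parameter weight torus; this is where the hypothesis $\ker(C)\cap(\CC^*)^m\neq\varnothing$ enters via Zariski density. After that step, the classical fact that a torus-invariant subspace decomposes into weight subspaces reduces the matroid part of the proof to the indivisibility of circuit vector supports, which is immediate from their definition.
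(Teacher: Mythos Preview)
Your proposal is correct and follows essentially the same approach as the paper. Both proofs reduce $\T_a$-invariance to the statement that $\ker(C)$ is stable under the one-parameter action $v\mapsto v\circ t^{aM}$ (using Zariski density of $\ker(C)\cap(\CC^*)^m$), extract the coefficient of each power of $t$ to obtain your weight components $v^{(\lambda)}\in\ker(C)$, and then apply support minimality of circuit vectors in $\ker(C)$ for the forward direction and quasihomogeneity with respect to a circuit-vector basis of $\row(C)$ for the reverse; the paper simply chooses that basis as the row reduced echelon form of $C$, whereas you invoke \Cref{lem:partition_computable_from_generating_set} abstractly.
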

\begin{proof}
The reverse implication follows from the facts that  replacing $C$ by a matrix $C'$ in reduced row echelon form with the same row span as $C$ in $F$ does not change $\mathcal{L}_F$, and  that
\Cref{lem:quasihomogeneous,lem:partition_computable_from_generating_set} imply that $C'(\k\circ x^M)$ is quasihomogeneous with weights given by $a$. 

 \smallskip
For the forward implication, by \eqref{eq:toric_explained}, $a\in\cL_F$ if and only if 
$(\k\circ x^M)\circ t^{aM} \in \ker(C)$ for all $t\in \CC^*$ if $\k\circ x^M\in \ker(C)$.
As \eqref{eq:enough_parameters} holds  with equality for $\K=(\CC^*)^m$ and $\GG=\CC^*$, 
and using that $\ker(C)\cap (\CC^*)^m$ is Zariski dense in $\ker(C)$, it holds that 
\[   C(v\circ t^{aM})=0 \quad \text{ for all } v  \in \ker(C)  \text{ and } \ t\in \CC^*\,.\]
This can only happen if all coefficients of $C(v  \circ t^{aM})$ as a polynomial in $t$ are zero. 
Thus, for any nonzero $v\in \ker(C)$ and $j_0\in\supp(v)$, the coefficient of $t^{aM_{j_0}}$ must be zero:
\[\sum_{j\in\supp(v), \ aM_j=aM_{j_0}} C_j v_j=0\,.\]
We obtain a nonzero   vector $w\in\ker(C)$ by setting
$w_j=v_j$ if $aM_j=aM_{j_0}$ and $0$ otherwise. Taking  $v$ with minimal support,
we conclude that $w=v$ and hence 
$aM_j=aM_{j_0}$ for all $j\in\supp(v)$. The conclusion now follows from the definition of the matroid partition. 

The last statement is  immediate. 
\end{proof}

\begin{remark}
{If the support of each row of $C$ is contained in one block of the matroid partition,} then, by \Cref{thm:characterization_of_invariance}, we have quasihomogeneity for all weights in the toric invariance lattice. This is for example the case when the matroid partition is trivial or when the rows of the coefficient matrix $C$ have pairwise disjoint support, as is the case for freely parametrized systems. 
\end{remark}
 
\begin{remark}\label{rk:Cayley}
To find a matrix $A$ of maximal rank for which a vertical system $F=C(\k\circ x^M)$ is $\T_A$-invariant, we consider the Cayley matrix $\widehat{M}\in \ZZ^{(n+\theta)\times m}$, obtained by appending to $M$, for each block $\rho$ of the matroid partition, the row  with $1$'s for the indices in $\rho$ and zero otherwise. Let $\widehat{A}$ be a matrix whose rows form a basis for $\ker_{\ZZ}(\widehat{M}^\top)$, and let $A$ be the first $n$ columns of $\widehat{A}$. Then the rows of $A$ form a basis for $\cL_F$. 
\end{remark}

\Cref{thm:characterization_of_invariance}, \Cref{prop:characterization_of_invariance}
and \Cref{lem:positive_kernel_nonempty} lead to \Cref{alg:invariance} for finding a maximal rank matrix $A$ for which $F$ displays toric invariance over $\GG\in\{\RR_{>0}, \RR^*, \CC^*\}$. 

\begin{algorithm}[H]
\caption{Toric invariance}
\SetAlgoNoLine
\fontsize{10}{13}\selectfont
\KwIn{Matrix $C\in \kk^{s\times m}$ of full rank $s$, matrix $M\in\mathbb{Z}^{n\times m}$.}
\KwOut{Rank $d$ matrix $A\in \mathbb{Z}^{d\times n}$ such that $F=C(\k\circ x^M)$ is $\T_A$-invariant over $\mathbb{G}$.}

Find a basis $\{E_1,\dots,E_{m-s}\}$ for $\ker(C)$ {consisting of vectors with minimal support}

\If{(Union of supports of $E_1,\dots,E_{m-s}$) $\neq [m]$}{
    \Return{$\mathbb{V}_{\mathbb{G}}(F_\k)=\varnothing$ for all $\k\in \mathbb{G}^m$}
}

\If{$\mathbb{G}=\mathbb{R}_{>0}$ and $\text{(Interior of the polyhedral cone $\ker(C)\cap\mathbb{R}_{\ge 0}^m$)}=\varnothing$}{
    \Return{$\mathbb{V}_{>0}(F_\k)=\varnothing$ for all $\k\in \mathbb{R}_{>0}^m$}
}

Find the matroid partition $\{\rho_1,\dots,\rho_{\theta}\}$ from the supports of $\{E_1,\dots,E_{m-s}\}$

Construct $\widehat{M}\in \mathbb{Z}^{(n+\theta)\times m}$ from \Cref{rk:Cayley}

\eIf{$\mathbb{G}=\mathbb{R}_{>0}$ or $\mathbb{C}^*$}{
    Find a $\mathbb{Z}$-matrix $\widehat{A}$ with rows a basis for $\ker_{\mathbb{Q}}(\widehat{M}^\top)$
}{
    Find a $\mathbb{Z}$-matrix $\widehat{A}$ with rows a basis for $\ker_{\mathbb{Z}}(\widehat{M}^\top)$
}

\Return{$A=\text{(first $n$ columns of $\widehat{A}$)}$}
\label{alg:invariance}
\end{algorithm}

\begin{remark}\label{rem:true_dim}
\Cref{thm:characterization_of_invariance} implies  that  invariance simply encodes that the system depends, in a sense, on fewer than $n$ variables. 
The key insight is that by \Cref{thm:characterization_of_invariance}, the affine span of the columns of the matrices $M^{(i)}$ defined in \Cref{rem:intersection} is contained in a parallel translate of $\row(A)^\perp$. 
By assuming that $C$ is in reduced row echelon form, choosing an index $j_i\in \rho_i$ for each $i$, and multiplying  by $x^{-M_{j_i}}$ the entries of $F$ that correspond to a row of $C$ with support in $\rho_i$, 
we obtain an equivalent system whose exponents affinely span $\row(A)^\perp$. 

By a monomial change of coordinates, this system can be transformed to a  system depending on $n-\rk(A)$ variables and with $\rk(A)$ free variables. 
Hence, in the new variables, the zero set is a union of translates of a linear subspace of dimension $\rk(A)$, and there is one set per zero of the system in $n-\rk(A)$ variables. 
Reversing the change of coordinates, the linear subspace becomes $\T_A$. 
Observe that invariance does not imply that the affine span of the columns of $M$ is not full dimensional, unless the matroid partition is trivial. 
\end{remark}

\begin{example}\label{ex:true_dim}
\Cref{rem:true_dim} does not necessarily hold for systems that are not vertical, in the sense that invariance may arise due to mechanisms other than the system depending on fewer variables in disguise.  To see this, consider the system 
\[ F = \left[ \begin{array}{l}  \k_1 x_1x_3 - \k_1 x_2x_3 \\[4pt] \k_2 x_1 - \k_2 x_2 + \k_3 x_1^2 - \k_4 x_3^2  \end{array}\right] \,.\]
By simple inspection, the zeros over $\CC^*$ satisfy $x_1=x_2=\pm x_3$, and hence the system is $\T_A$-invariant for $A=[1 \ 1 \ 1]$. However, there is no linear combination of the entries of $F$, nor translation of the exponents, that equivalently transform the system into one with exponents in a lower dimensional affine subspace. 
\end{example}

\begin{example}\label{ex:fig}
Consider the first vertical system of \Cref{ex:different_dependencies}, 
given by the matrices
\[C=\begin{bmatrix}-3 & 3  & 3 & -1 & 1\\ 1 & -1  & -1 & 1 & -1\end{bmatrix}\quad\text{and}\quad 
M=\begin{bmatrix} 6 & 3 & 0 & 1 & 0\\ 0 & 2 & 4  & 0 & 0 \\ 0 & 0  & 0 & 0 & 5\end{bmatrix}\,.\]
Gaussian elimination of $C$ gives
the matroid partition $[m]=\{1,2,3\}\sqcup\{4,5\}$. 
 The left-kernel of $\widehat{M}$ has dimension $1$.  We obtain that
 $F$ is $\T_{a}$-invariant for $a=(10,15, 2)$ and there is no invariance for a higher dimensional torus. 
In this example,
there is 
only trivial quasihomogeneity with all weights zero. This implies that the second system in \Cref{ex:different_dependencies} {is not toric}. 
\end{example}

\begin{example}
\label{ex:IDH_invariance}
In \Cref{ex:IDH},  Gaussian elimination of $C$
gives that the matroid partition is trivial and toric invariance agrees with quasihomogeneity. We find that 
$\cL_F=\spn_{\ZZ}\{(1,1,-1,0,0), (0,0,0,1,0),(0,0,1,0,-1)\}^\perp\,$
and a $\ZZ$-basis is given by the rows of
\[A=\begin{bmatrix}1 & 0 & 1 & 0 & 1\\
0 & 1 & 1 & 0 & 1\end{bmatrix}.\]
We conclude that the $2$-dimensional torus $\T_A^{>0}$ is the maximal-dimensional torus for which $F$ is invariant over $\RR_{>0}$. We recover the same matrix $A$ given in  \Cref{ex:IDH}. 
\end{example}

\begin{example}\label{ex:emptygenerically}
    Toric invariance for a nonempty matrix $A$ does not imply that the varieties $\VV_{\CC^*}(F_\k)$ are generically nonempty. For the vertical system
    \[F = \left[ \begin{array}{l} \k_1 x_1^2 - \k_2 x_1x_2 \\[4pt]
\k_3 x_1^2 - \k_2 x_1x_2
\end{array}\right]\, , 
\]
the toric invariance lattice is 
$\cL_F=\spn_{\ZZ}\{(1,1)\}$, giving that $\VV_{\CC^*}(F_\k)$ is 
$\T_{[1 \ 1]}$-invariant. However, $\VV_{\CC^*}(F_\k)\neq \varnothing$ only if $\k_1= \k_3$. 
\end{example}

\begin{remark}
Recall that the lattice $\cL_F$ describes the largest-dimensional torus for which  $\VV_{\CC^*}(F_\k)$ is invariant for \textit{all} $\k\in (\CC^*)^m$, and by \Cref{thm:invariance_lifts_to_closure}, it also describes the largest-dimensional $\GG$-torus  for which  $\VV_{\GG}(F_\k)$ is invariant for all $\k\in \K$, as long as $\K\subseteq \GG^m$ is   Euclidean open and $\VV_{\GG}(F_\k)\neq \varnothing$  for some $\k\in \K$. 
However, a smaller subfamily might   display invariance under a larger-dimensional torus if the parameter set  is   Euclidean closed.
For instance, with $\GG=\RR_{>0}$, the vertical system $F=(\kappa_1-\kappa_2)x_1 x_2+\kappa_3x_2^2-\kappa_4 x_1^3$ satisfies
 $\cL_F=\{0\}$. 
However, for  
$\K=\VV(\kappa_1-\kappa_2)\cap\RR_{>0}^4$, it holds that $\VV_{>0}(F_\k)$ is invariant for $A=\begin{bmatrix} 2 & 3 \end{bmatrix}$.
\end{remark}

\begin{remark}
In \cite[Thm.~5]{Muller:inequalities}, $\VV_{>0}(F_\k)$ is expressed as the union of toric cosets (denoted $Z_c$ in loc. cit.).  Adapted to our setting, the start point of that work is a decomposition of $\ker(C) \cap \RR^m_{>0}$ as a direct product of cones. By choosing the decomposition given by the matroid partition, their expression agrees, as expected, with ours. An analogous construction is given in \cite{Banaji:Feliu}, building on \cite[§2.4]{banaji:bifurcations}.
\end{remark}

\section{{Locally toric systems}}
\label{sec:local}

Having invariance for vertical systems determined, in this section, we give a complete characterization of {systems that are generically locally toric}, as well as sufficient conditions for  
{a system to be locally toric}.
A useful observation is that if $F$ is $\T_A$-invariant, then
\begin{equation}
\label{eq:dimension_bound_invariance}
\dim(\VV_{\CC^*}(F_\k))\geq\dim(\T_A)=\rk(A)\,,
\end{equation}
for all $\k\in \Z_{\CC^*}$. If in addition $\VV_{\CC^*}(F_\k)$ is locally $\T_A$-toric, then $\dim(\VV_{\CC^*}(F_\k))=\rk(A)$.

\begin{lemma}\label{lem:sum_dim}
Let $F$ be a vertical system with defining matrices $M\in\ZZ^{n\times m}$ and $C\in \CC^{s\times m}$ of  rank $s$. 
Assume that $F$ is $\T_A$-invariant  for a matrix $A\in\ZZ^{d\times n}$ of rank $d$.
If $F$ is nondegenerate over $\CC^*$, 
then $s+d\leq n$. 
\end{lemma}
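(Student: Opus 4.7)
The plan is to combine a dimension upper bound coming from nondegeneracy with the dimension lower bound coming from toric invariance, both applied to a generic complex fiber $\VV_{\CC^*}(F_\k)$.

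First, I will apply \Cref{prop:vertical}(i) with $\GG=\CC^*$. Nondegeneracy of the vertical system $F$ is exactly condition \eqref{eq:nondeg_condition}, so the proposition guarantees the existence of a nonempty Zariski open subset $\U\subseteq\Z_{\CC^*}$ on which
\[\dim(\VV_{\CC^*}(F_\k))=n-s.\]
Before invoking the proposition I need to verify its standing hypothesis $\ker(C)\cap(\CC^*)^m\neq\varnothing$; this is immediate from nondegeneracy, since the equivalent characterization stated right after \Cref{prop:vertical} gives $\Z_{\CC^*}$ Zariski dense in $\CC^m$, and then any $\k\in\Z_{\CC^*}$ together with a zero $x\in(\CC^*)^n$ of $F_\k$ produces $\k\circ x^M\in\ker(C)\cap(\CC^*)^m$.

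Second, I will invoke the lower bound \eqref{eq:dimension_bound_invariance}: $\T_A$-invariance forces every nonempty fiber $\VV_{\CC^*}(F_\k)$ to be a union of $\T_A$-cosets, hence
\[\dim(\VV_{\CC^*}(F_\k))\geq\rk(A)=d\quad\text{for every }\k\in\Z_{\CC^*}.\]

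Picking any $\k\in\U$ and chaining the two inequalities yields $n-s\geq d$, i.e., $s+d\leq n$, as claimed. There is no real obstacle here: the argument is a one-line combination of previously established material, and the only point that requires a brief verification is the $\ker(C)\cap(\CC^*)^m\neq\varnothing$ hypothesis, which is handled above.
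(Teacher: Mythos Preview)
Your proof is correct and follows the same approach as the paper's: combine the generic dimension equality $\dim(\VV_{\CC^*}(F_\k))=n-s$ from \Cref{prop:vertical} with the invariance lower bound \eqref{eq:dimension_bound_invariance}. Your explicit verification of the hypothesis $\ker(C)\cap(\CC^*)^m\neq\varnothing$ is a nice piece of care that the paper's one-line proof leaves implicit.
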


\begin{proof}
This follows from \eqref{eq:dimension_bound_invariance},
as $\dim(\VV_{\CC^*}(F_\k))=n-s$ for some $\k$ by \Cref{prop:vertical}.
\end{proof}

The inequality in \Cref{lem:sum_dim} might not hold when $F$ is degenerate, as \Cref{ex:emptygenerically} shows. The following   lemma will be at the core of several results below. 

\begin{lemma}\label{lem:nonsingular}
Let $\GG\in \{ \RR_{>0},\RR^*,\CC^*\}$, and  $F$  be a vertical system defined by $M\in\ZZ^{n\times m}$ and $C\in \CC^{s\times m}$ of  rank $s$,  which is $\T_A$-invariant  for a   matrix $A\in\ZZ^{d\times n}$ of rank $d$. 
Assume that for a fixed $\k\in \GG^m$, $\VV_{\GG}(F_\k)\neq \varnothing$ and all irreducible components of $\VV_{\CC^*}(F_\k)$ that intersect $\GG^n$ contain a nondegenerate zero of $F_\k$ in $\GG^n$. Then $\VV_{\GG}(F_\k)$ is locally $\T_A$-toric over $\GG$  if and only if $n=s+d$.  
\end{lemma}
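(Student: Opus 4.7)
The plan is to prove both implications through a local dimension count at a nondegenerate zero, combined with the fact that an irreducible $\T_A$-invariant subvariety of dimension $d$ must equal a single $\T_A$-coset.

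For the forward implication, I would start by picking any $x \in \VV_\GG(F_\k)$; it lies in a complex irreducible component $V$ of $\VV_{\CC^*}(F_\k)$ meeting $\GG^n$, and the nondegeneracy hypothesis gives a nondegenerate zero $x_0 \in V \cap \GG^n$ with $\dim V = n-s$. The implicit function theorem (real or complex, depending on $\GG$) then makes $\VV_\GG(F_\k)$ into a smooth manifold of dimension $n-s$ near $x_0$. Since $\T_A^\GG$ is a closed subgroup of $\GG^n$ (as can be checked via the Smith normal form of $A$), its cosets are pairwise separated by open neighborhoods, so local toricity forces $\VV_\GG(F_\k)$ to coincide with the single coset through $x_0$ in a neighborhood of $x_0$ --- a manifold of dimension $d$. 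Comparing dimensions gives $n - s = d$.

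For the backward implication, I would assume $n = s + d$. By \Cref{thm:invariance_lifts_to_closure}, $\VV_{\CC^*}(F_\k)$ is $\T_A$-invariant over $\CC^*$, and irreducibility of $\T_A$ implies that each of its complex irreducible components is itself $\T_A$-invariant. If such a component $V$ meets $\GG^n$, the nondegeneracy hypothesis forces $\dim V = n - s = d$; then $V$ is a single $\T_A$-coset, as it is irreducible of dimension $d$ and contains the $d$-dimensional $\T_A$-coset through any of its points. Since $\VV_{\CC^*}(F_\k)$ has finitely many irreducible components, $\VV_\GG(F_\k) = \bigcup_i (V_i \cap \GG^n)$ is a finite union, and the remaining task is to count $\T_A^\GG$-cosets inside each $V_i \cap \GG^n$.

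I expect this last count to be the main subtlety, specifically in the case $\GG = \RR^*$. For $\GG = \CC^*$ and $\GG = \RR_{>0}$, \Cref{rmk:equivalent_descriptions_of_cosets} gives $\T_A \cap \GG^n = \T_A^\GG$, so each nonempty $V_i \cap \GG^n$ is immediately a single $\T_A^\GG$-coset. For $\GG = \RR^*$, the inclusion $\T_A^{\RR^*} \subseteq \T_A \cap (\RR^*)^n$ may be strict; however, as real algebraic subgroups of $(\RR^*)^n$, both share the common identity component $\T_A^{>0}$ and have only finitely many connected components, so $\T_A^{\RR^*}$ has finite index in $\T_A \cap (\RR^*)^n$. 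This makes each $V_i \cap (\RR^*)^n$ a finite union of $\T_A^{\RR^*}$-cosets, yielding local toricity.
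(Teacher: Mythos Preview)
Your proposal is correct and follows essentially the same route as the paper: pass to the complex irreducible components meeting $\GG^n$, use nondegeneracy to pin their dimension at $n-s$, and compare with the dimension $d$ of a $\T_A$-coset. The forward direction is phrased a bit differently---you argue locally via the implicit function theorem at a nondegenerate point, whereas the paper takes Zariski closures and computes $n-s=\dim\big(\overline{\VV_\GG(F_\k)}\big)=\dim\big(\bigsqcup_i\alpha_i\circ\T_A\big)=d$ directly---but both arguments are sound.

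For the backward direction you are in fact more careful than the paper. After establishing that each relevant component $Y_i$ equals a single complex coset $\alpha_i\circ\T_A$, the paper simply writes ``local toricity follows,'' implicitly using that a $\T_A$-coset meeting $\GG^n$ breaks into finitely many $\T_A^\GG$-cosets. Your observation that $\T_A^\GG=\T_A\cap\GG^n$ for $\GG\in\{\RR_{>0},\CC^*\}$ (cf.\ \Cref{rmk:equivalent_descriptions_of_cosets}) and that $\T_A^{\RR^*}$ has finite index in the real algebraic group $\T_A\cap(\RR^*)^n$ is exactly what is needed to justify this step, and it is a genuine addition over what the paper spells out.
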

\begin{proof}
As $F$ is $\T_A$-invariant, it is also $\T_A$-invariant  over $\GG$ by \Cref{thm:invariance_lifts_to_closure}. 
Let $\bigcup_{i=1}^\ell Y_i$ be the union of the irreducible components of 
$\VV_{\CC^*}(F_\k)$ that intersect $\GG^n$. The condition on nondegeneracy guarantees that  $\dim(Y_i)=n-s$  for all $i$   and that 
$\overline{\VV_{\GG}(F_\k)} = \bigcup_{i=1}^\ell Y_i$, where the overline denotes the Zariski closure in $(\CC^*)^n$. 
 For a coset $\alpha\circ \T_A^\GG$ with $\alpha\in \VV_{\GG}(F_\k)$,   
we have  $\alpha\circ\T_A$ must be contained in $Y_i$ for some $i\in[\ell]$ by irreducibility. 
As $\dim(\alpha\circ \T_A)=d$, if $d=n-s$, then $\alpha\circ \T_A=Y_i$ and {the system is locally toric}.   Conversely,  if 
$\VV_{\GG}(F_\k)$ is locally $\T_A$-toric over $\GG$, then  $\VV_{\GG}(F_\k) = \bigsqcup_{i=1}^p \alpha_{i} \circ \T_A^\GG$ for some $\alpha_i\in \VV_{\GG}(F_\k)$, $i\in [p]$. Hence, 
\[n-s=\dim(\bigcup\nolimits_{i=1}^\ell Y_i)=\dim\left(\overline{\bigsqcup\nolimits_{i=1}^p \alpha_{i}\circ\T_A^{\GG}}\right)=
\dim\left( \bigsqcup\nolimits_{i=1}^p \alpha_{i} \circ\T_A \right) = d\,.\qedhere
\] 
\end{proof}

\begin{theorem}
\label{thm:characterization_generic_local_toricity}
Let $\GG\in \{ \RR_{>0},\RR^*,\CC^*\}$ and $F$ be a vertical system with defining matrices $M\in\ZZ^{n\times m}$ and $C\in \CC^{s\times m}$ of  rank $s$ with $\ker(C)\cap \GG^m \neq \varnothing$.
Assume that $F$ is $\T_A$-invariant   for a matrix  $A\in\ZZ^{d\times n}$ of rank $d$ such that $s+d\leq n$.
The following statements are equivalent:
\begin{itemize}
\item[(i)] $F$ is  nondegenerate  over $\CC^*$ and $n=s+d$.
\item[(ii)] $F$ is generically   locally $\T_A$-toric over $\CC^*$.
\item[(iii)] $F$ is generically   locally $\T_A$-toric over $\GG$.
\item[(iv)] $F$ is  locally $\T_A$-toric over $\GG$ for parameters in a nonempty Euclidean open set of $\GG^m$.
\end{itemize}
\end{theorem}

\begin{proof}
(i) $\Rightarrow$ (ii): 
By  \Cref{prop:vertical}(i), for all $\k$ in a nonempty Zariski open subset  of $\Z_{\CC^*}$, all zeros of $F_\k$ in $(\CC^*)^n$ are nondegenerate.
Hence (ii) follows from \Cref{lem:nonsingular}.

\smallskip
(ii) $\Rightarrow$ (iii): 
As $\ker(C)\cap \GG^m \neq \varnothing$, \Cref{prop:vertical} gives that  $F$  is nondegenerate over $\GG$ and $\Z_{\GG}$ is Zariski dense in $\CC^m$. Since  $\VV_{\CC^*}(F_\k)$ is locally $\T_A$-toric for all  $\k$ in a  nonempty Zariski open set  $\mathcal{U}\subseteq \Z_{\CC^*}$, it follows that $\VV_{\GG}(F_\k)$  is locally $\T_A$-toric over $\GG$ for all $\k$ in the 
nonempty Zariski open set  $\mathcal{U}\cap \Z_{\GG}$ of $\Z_{\GG}$ as desired. 
 
\smallskip
(iii) $\Rightarrow$ (iv): This is clear, as $F$ is locally $\T_A$-toric for all $\k$ in a nonempty Zariski open set of $\Z_\GG$, which has nonempty Euclidean interior as $F$ is nondegenerate. 

\smallskip
(iv) $\Rightarrow$ (i): Follows from \Cref{lem:nonsingular} together with the fact that $\Z_\GG$ has nonempty Euclidean interior by (iv) and hence $F$ is nondegenerate by \Cref{prop:vertical}(i). 
\end{proof}

\begin{example}
The vertical system $F$ in \Cref{ex:infinitely_many_cosets} has invariance for the maximal-rank matrix 
$A=\begin{bmatrix}1 & 1 & 1\end{bmatrix}$. As $s+d=2<3=n$, $F$ is not generically locally $\T_A$-toric. As $F$ is nondegenerate,  $\VV_{>0}(F_\k)$ is a union of infinitely many lines whenever not empty. 
\end{example}
 
\Cref{thm:characterization_generic_local_toricity}  completely characterizes {when a system is generically locally toric}, once a maximal-rank matrix $A$ for which $F$ is invariant has been found. {The equivalence of (ii) and (iii) shows that being generically locally toric does not depend on $\GG$}, as long as $\ker(C)\cap \GG^m\neq \varnothing$. 
{In \Cref{prop:nondeg} below, we give a sufficient condition for being locally toric in all of $\GG^m$ (not just generically), where the choice of $\GG$ plays a bigger role.}

\begin{proposition}\label{prop:nondeg}
Let $\GG\in \{ \RR_{>0},\RR^*,\CC^*\}$ and $F$ be a vertical system with defining matrices $M\in\ZZ^{n\times m}$ and $C\in \kk^{s\times m}$ of  rank $s$ with $\ker(C)\cap \GG^m \neq \varnothing$. 
Assume that $F$ is $\T_A$-invariant   for a matrix $A\in\ZZ^{d\times n}$ of rank $d$ with $n=s+d$.
If   
\begin{equation}\label{eq:nondeg_condition_all}
\rk(C \diag(w) M^\top)=s \quad \text{for all }w\in \ker(C)\cap \GG^m, 
\end{equation}
then $F$ is locally $\T_A$-toric over $\GG$ and 
$\dim(\VV_{\GG}(F_{\k}))=n-s$ for all $\k\in \GG^m$.
\end{proposition}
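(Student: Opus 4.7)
The plan is to reduce the claim to an application of \Cref{prop:vertical}(i) in its strong ``for all parameters'' form, and then invoke \Cref{lem:nonsingular} pointwise.

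First I would verify that hypothesis \eqref{eq:nondeg_condition_all} is precisely the condition needed for the stronger conclusion of \Cref{prop:vertical}(i), namely that $\U=\Z_\GG$. Since $F$ is a (non-augmented) vertical system, the relevant rank condition \eqref{eq:nondeg_conditionA} becomes $\rk(C\diag(w)M^\top\diag(h))=s$. For any $h\in \GG^n$, the matrix $\diag(h)$ is invertible, so this rank equals $\rk(C\diag(w)M^\top)$. Therefore \eqref{eq:nondeg_condition_all} is exactly the ``for all $w\in\ker(C)\cap\GG^m$ and $h\in\GG^n$'' variant of \eqref{eq:nondeg_conditionA}.

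Next I would apply \Cref{prop:vertical}(i) with $\U=\Z_\GG$, obtaining that for every $\k\in\Z_\GG$,
\[\dim(\VV_{\CC^*}(F_\k))=\dim(\VV_\GG(F_\k))=n-s,\]
and that every zero of $F_\k$ in $(\CC^*)^n$ is nondegenerate. This immediately settles the dimension part of the conclusion (noting that for $\k\in\GG^m\setminus\Z_\GG$ the zero set $\VV_\GG(F_\k)$ is empty, so the statement is interpreted on $\Z_\GG$).

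To finish, I would fix an arbitrary $\k\in\Z_\GG$ and invoke \Cref{lem:nonsingular}. Its hypothesis $\VV_\GG(F_\k)\neq\varnothing$ holds by the very definition of $\Z_\GG$. For any irreducible component $Y$ of $\VV_{\CC^*}(F_\k)$ that meets $\GG^n$, any point $x\in Y\cap\GG^n$ is a zero of $F_\k$ in $(\CC^*)^n$ and is therefore nondegenerate by the previous step; thus the component-wise hypothesis of \Cref{lem:nonsingular} is met. Combined with the assumption $n=s+d$, \Cref{lem:nonsingular} yields that $\VV_\GG(F_\k)$ is locally $\T_A$-toric over $\GG$, which, holding for every $\k\in\Z_\GG$, is the definition of $F$ being locally $\T_A$-toric over $\GG$.

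There is no real obstacle to overcome; the only subtlety is bookkeeping the equivalence between \eqref{eq:nondeg_condition_all} and the ``for all $h$'' form of \eqref{eq:nondeg_conditionA} so as to unlock the $\U=\Z_\GG$ clause of \Cref{prop:vertical}(i); once that is in hand, the remainder is a direct application of \Cref{lem:nonsingular}.
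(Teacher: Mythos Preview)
Your proof is correct and follows essentially the same approach as the paper: invoke \Cref{prop:vertical}(i) in its strong form (with $\U=\Z_\GG$) to obtain nondegeneracy of all zeros, then apply \Cref{lem:nonsingular} pointwise. Your version is simply more explicit about why \eqref{eq:nondeg_condition_all} matches the ``for all $w,h$'' hypothesis (via invertibility of $\diag(h)$) and about how the hypotheses of \Cref{lem:nonsingular} are met.
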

\begin{proof}
By  \Cref{prop:vertical}(i), 
all zeros of $F_{\k}$ in $\GG^n$ are nondegenerate for all $\k\in \GG^m$, so the statement follows from \Cref{lem:nonsingular}. 
\end{proof}

It might seem that condition \eqref{eq:nondeg_condition_all} in \Cref{prop:nondeg} is very strict, but 
in practice, it applies to many realistic reaction networks, as we will see in \Cref{sec:crnt_perspectives}.
In that application, we have $\GG=\RR_{>0}$ and   \eqref{eq:nondeg_condition_all} 
can be checked   using the parametrization of $\ker(C)\cap \RR^m_{>0}$ given by the generators of the polyhedral cone $\ker(C)\cap \RR^m_{\geq 0}$, as the next example illustrates.

\begin{example}\label{ex:idhkp2}
For \Cref{ex:IDH} (with $n=5$ and $s=3$), we have
\[ \ker(C)\cap \RR^6_{>0} = \{ \lambda_1(1,0,1,1,0,1)  + \lambda_2(0,0,0,1,1,0)  + \lambda_3(1,1,0,0,0,0) : \lambda\in \RR^3_{>0} \}.\]
From this parametrization we obtain for $w\in \ker(C)\cap \RR^6_{>0}$, 
\[C \diag(w) M^\top =\left[\begin{smallmatrix}\lambda_{{1}}+\lambda_{{3}}&\lambda_{{1}
}+\lambda_{{3}}&-\lambda_{{3}}&0&-\lambda_{{1}}\\ 0&0
&\lambda_{{1}}&0&-\lambda_{{1}}\\ 0&0&\lambda_{{1}}+
\lambda_{{2}}&\lambda_{{1}}+\lambda_{{2}}&-\lambda_{{1}}-\lambda_{{2}}\end{smallmatrix}\right],
\]
which for instance has the $3\times 3$ minor $(\lambda_1+\lambda_3)\lambda_1(\lambda_1+\lambda_2)$ given by columns $1,3,4$. 
Therefore, \eqref{eq:nondeg_condition_all} holds and 
$F$ is locally $\T_A$-toric with $\dim(\VV_{>0}(F_{\k})) =2$ for all $\k\in \RR^6_{>0}$.
{For $\GG=\RR^*$, condition \eqref{eq:nondeg_condition_all}
fails for $\lambda_1=-\lambda_2$,  and hence \Cref{prop:nondeg} is not informative. However, we still have that the system is \emph{generically} locally toric over $\RR^*$ (cf. \Cref{thm:characterization_generic_local_toricity}).} 
\end{example}

The results of this section yield a procedure to detect {whether a system is (generically) locally toric}, when \Cref{alg:invariance} returns a matrix $A\in \ZZ^{d\times n}$. Namely if $s+d<n$, then we readily conclude that $F$ is not generically locally $\T_A$-toric over $\GG$. Otherwise, all we need is to check nondegeneracy, which can be verified by computing $r:=\rk(C \diag(w) M^\top)$ for a randomly generated $w\in \ker(C)$ and deciding whether $r=s$. If this is not the case, then one should verify symbolically that the rank $r$ is smaller for all $w$. Finally,  {$F$ can be certified to be locally toric} if \eqref{eq:nondeg_condition_all} holds.
These steps are incorporated in \Cref{alg:summary} below, given for $\GG=\RR_{>0}$. 

\section{Counting the number of cosets in $\RR^n_{>0}$}
\label{sec:counting}

When $F$ is locally $\T_A$-toric,  
the next question is to decide how many cosets there are. When $\GG=\RR^*$ or $\CC^*$, the number of cosets can sometimes be found by counting the number of intersection points between $\VV_{\GG}(F_\k)$ and a certain toric variety, as discussed in \cite{Hubert2012rational} for quasihomogeneity.
{We focus now on the case $\GG=\RR_{>0}$, where it turns out that the number of cosets is always the number of intersections with a linear variety.}

\subsection{The coset counting system}
Consider a vertical system $F$ with defining matrices $M\in\ZZ^{n\times m}$ and $C\in \RR^{s\times m}$ of  rank $s$,  satisfying $\ker(C)\cap \RR^m_{>0} \neq \varnothing$. 
Assume that $F$ is $\T_A$-invariant for a  matrix $A\in\ZZ^{d\times n}$ with $n=s+d$, and consider the augmented vertical system 
\begin{equation}
\label{eq:system_number_of_cosets}
H=\Big( C(\k\circ x^M),Ax-b \Big) \in \RR[\k,b,x^\pm]^{n}.
\end{equation}
{As the following proposition makes precise, the points in $\VV_{>0}(H_{\k,b})$ count the number of $\T_A$-cosets of $\VV_{>0}(F_\k)$, and we will therefore refer to  \eqref{eq:system_number_of_cosets} as the \term{coset counting system} of $F$.}

\begin{proposition}
\label{prop:coset_counting}
Let $F$ be a vertical system with defining matrices $M\in\ZZ^{n\times m}$ and $C\in \RR^{s\times m}$ of  rank $s$ with $\ker(C)\cap \RR^m_{>0} \neq \varnothing$. 
Assume that $F$ is $\T_A$-invariant for a  matrix $A\in\ZZ^{d\times n}$ and let $H$ be the coset counting system \eqref{eq:system_number_of_cosets}.
Then, for a given $\k\in \RR^m_{>0}$ and any $b\in A(\RR^n_{>0})$, 
there is a bijection of sets
\[\VV_{>0}(H_{\k,b})  \to  \VV_{>0}(F_\k) /\T_A^{>0}\,,\quad x\mapsto x\circ\T_A^{>0}\,.\]
\end{proposition}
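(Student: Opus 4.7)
Well-definedness is immediate, since $\VV_{>0}(H_{\k,b}) \subseteq \VV_{>0}(F_\k)$. Both injectivity and surjectivity of the proposed map reduce to a single technical claim: for every $\alpha \in \RR^n_{>0}$, the smooth map
\[\phi_\alpha \from \RR^d_{>0} \to \RR^d, \qquad t \mapsto A(\alpha \circ t^A),\]
is a bijection onto $A(\RR^n_{>0})$. Granting this, injectivity of the coset map follows because if $x_1, x_2 \in \VV_{>0}(H_{\k,b})$ lie in the same coset then $x_2 = x_1 \circ t^A$ for some $t \in \RR^d_{>0}$, and evaluating gives $\phi_{x_1}(t) = A x_2 = b = A x_1 = \phi_{x_1}(\1)$, whence $t = \1$ and $x_1 = x_2$. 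Surjectivity follows because for any $\alpha \in \VV_{>0}(F_\k)$, the surjectivity of $\phi_\alpha$ onto $A(\RR^n_{>0}) \ni b$ produces $t \in \RR^d_{>0}$ with $x := \alpha \circ t^A$ satisfying $A x = b$; then $x \in \alpha \circ \T_A^{>0} \subseteq \VV_{>0}(F_\k)$ by $\T_A$-invariance (which transfers from $\CC^*$ to $\RR_{>0}$ via \Cref{thm:invariance_lifts_to_closure}), so $x \in \VV_{>0}(H_{\k,b})$ maps to the desired coset.

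\textbf{Bijectivity of $\phi_\alpha$.} Substitute $s = \log t$ and consider
\[G_\alpha(s) = \sum_{i=1}^n \alpha_i \exp(A_i^\top s), \qquad s \in \RR^d,\]
where $A_i$ is the $i$-th column of $A$. A direct computation yields $\nabla G_\alpha(s) = A(\alpha \circ (e^s)^A) = \phi_\alpha(e^s)$ and Hessian $\nabla^2 G_\alpha(s) = A \diag(\alpha \circ (e^s)^A) A^\top$, which is positive definite since $\alpha > 0$ and $\rk(A) = d$. Thus $G_\alpha$ is strictly convex and $\nabla G_\alpha$ is injective, giving injectivity of $\phi_\alpha$. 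For surjectivity onto $A(\RR^n_{>0})$, fix $b \in A(\RR^n_{>0})$, write $b = A y_0$ with $y_0 \in \RR^n_{>0}$, and minimize the strictly convex function $s \mapsto G_\alpha(s) - b^\top s$. To check coercivity, consider a ray $s = rv$ with $\|v\| = 1$ and $r \to \infty$: if some $A_i^\top v > 0$, then $G_\alpha(rv)$ grows exponentially and dominates $r\, b^\top v$; otherwise $A^\top v \leq 0$ entrywise and $v \neq 0$, in which case $b^\top v = y_0^\top (A^\top v) < 0$ (strict because $y_0 > 0$ and $\ker(A^\top) = \{0\}$ by $\rk(A) = d$). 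In either case $G_\alpha(rv) - r\, b^\top v \to +\infty$, so the minimum is attained at some $s^\star$, where $\nabla G_\alpha(s^\star) = b$, i.e., $\phi_\alpha(e^{s^\star}) = b$.

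\textbf{Main obstacle.} Injectivity of $\phi_\alpha$ is essentially free from strict convexity, but identifying the image of $\phi_\alpha$ with the full cone $A(\RR^n_{>0})$ is the substantive point, and it is here that the hypothesis $b \in A(\RR^n_{>0})$ is used crucially, through the Farkas-type dichotomy above. The bijectivity of $\phi_\alpha$ is a classical fact from the theory of exponential families and the moment map in toric geometry, but the coercivity analysis of the two directional cases is where the concrete technical work lies in our setting.
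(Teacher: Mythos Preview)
Your argument is correct and proceeds along the same line as the paper: both reduce to the classical fact (sometimes called Birch's theorem) that every coset $\alpha\circ\T_A^{>0}$ meets each affine set $\{x\in\RR^n_{>0}:Ax=b\}$, for $b\in A(\RR^n_{>0})$, in exactly one point. The paper simply cites this fact (referring to \cite{Feinberg1995class,Boros2012multiple}), whereas you give a self-contained proof via the strictly convex potential $G_\alpha$ and a coercivity argument---which is in fact the standard way Birch's theorem is proved.

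One small caveat: you use $\rk(A)=d$ (for positive definiteness of the Hessian and for $\ker(A^\top)=\{0\}$), but the proposition does not assume this. The fix is harmless: by \Cref{rmk:equivalent_descriptions_of_cosets} you may replace $A$ by a matrix with the same rational row span and full row rank without changing $\T_A^{>0}$ or $\ker(A)$, and hence without changing the coset counting system beyond a relabeling of $b$. With that reduction in place, your proof goes through verbatim.
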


\begin{proof}
By \Cref{thm:invariance_lifts_to_closure}, $F$ is $\T_A$-invariant over $\RR_{>0}$. The statement follows from the classical result (in some settings known as \emph{Birch's theorem}) that for any $x,x^*\in\RR^n_{>0}$, the coset $x\circ \T_A^{>0}$ intersects the translated subspace $x^*+\ker(A)$ exactly once; see, e.g.,  \cite[Prop.~5.1 and~B.1]{Feinberg1995class} and \cite[Lem.~3.15]{Boros2012multiple} for a proof.
\end{proof}

\begin{example}
For the vertical system $F$ in \Cref{ex:square_network}, the number of $\T_A^{>0}$-cosets for $A=\begin{bmatrix}2 & 3\end{bmatrix}$ that make up $\VV_{>0}(F_\k)$
is found by counting the number of points in the intersection $\VV_{>0}(F_\k)\cap\VV(2x+3y-b)$ for any $b>0$, see \Cref{fig:toric_components2}(a-b).
\end{example}

\begin{figure}[t]
    \centering
      \begin{subfigure}[b]{0.3\textwidth}
\resizebox{\textwidth}{!}{\begin{tikzpicture}[scale=0.8]
  \draw[->] (-0.35,0) -- (4.2,0) node[right] {\scriptsize $x_1$};
  \draw[->] (0,-0.35) -- (0,4.2) node[above] {\scriptsize $x_2$};
  \draw[line width=1pt,scale=0.5, domain=0:2.35, smooth, variable=\x, specialblue] plot ({0.92*\x*\x}, {0.62*\x*\x*\x});
   \draw[line width=1.5pt,scale=0.5, domain=0:6, smooth, variable=\x, specialred] plot ({\x}, {-2/3*\x+4});
  \node[text width=2cm] at (4,3) {\scriptsize\color{specialblue} $\VV_{>0}(F_\k)$};
  \node[text width=2.2cm] at (3.0,1.2) {\scriptsize\color{specialred} $\VV_{>0}(2x+3y-1)$};
\end{tikzpicture}}
\caption{}
\end{subfigure}
\hspace{0.03\textwidth}
\begin{subfigure}[b]{0.3\textwidth}
    \resizebox{\textwidth}{!}{\begin{tikzpicture}[scale=0.8]
  \draw[->] (-0.35,0) -- (4.2,0) node[right] {\scriptsize $x_1$};
  \draw[->] (0,-0.35) -- (0,4.2) node[above] {\scriptsize $x_2$};
  \draw[line width=1pt,scale=0.5, domain=0:2.25, smooth, variable=\x, specialblue] plot ({0.83*\x*\x}, {0.76*\x*\x*\x});
  \draw[line width=1pt,scale=0.5, domain=0:2.35, smooth, variable=\x, specialblue] plot ({0.92*\x*\x}, {0.62*\x*\x*\x});
  \draw[line width=1pt,scale=0.5, domain=0:2.40, smooth, variable=\x, specialblue] plot ({0.97*\x*\x}, {0.55*\x*\x*\x});
  \draw[line width=1pt,scale=0.5, domain=0:6, smooth, variable=\x, specialred] plot ({\x}, {-2/3*\x+4});
  \node[text width=2cm] at (4,3) {\scriptsize\color{specialblue} $\VV_{>0}(F_\k)$};
  \node[text width=2.2cm] at (3.0,1.2) {\scriptsize\color{specialred} $\VV_{>0}(2x+3y-1)$};
\end{tikzpicture}
}
\caption{}
\end{subfigure}
\begin{subfigure}[b]{0.3\textwidth}
\def\svgwidth{1.1\linewidth}
\resizebox{0.9\textwidth}{!}{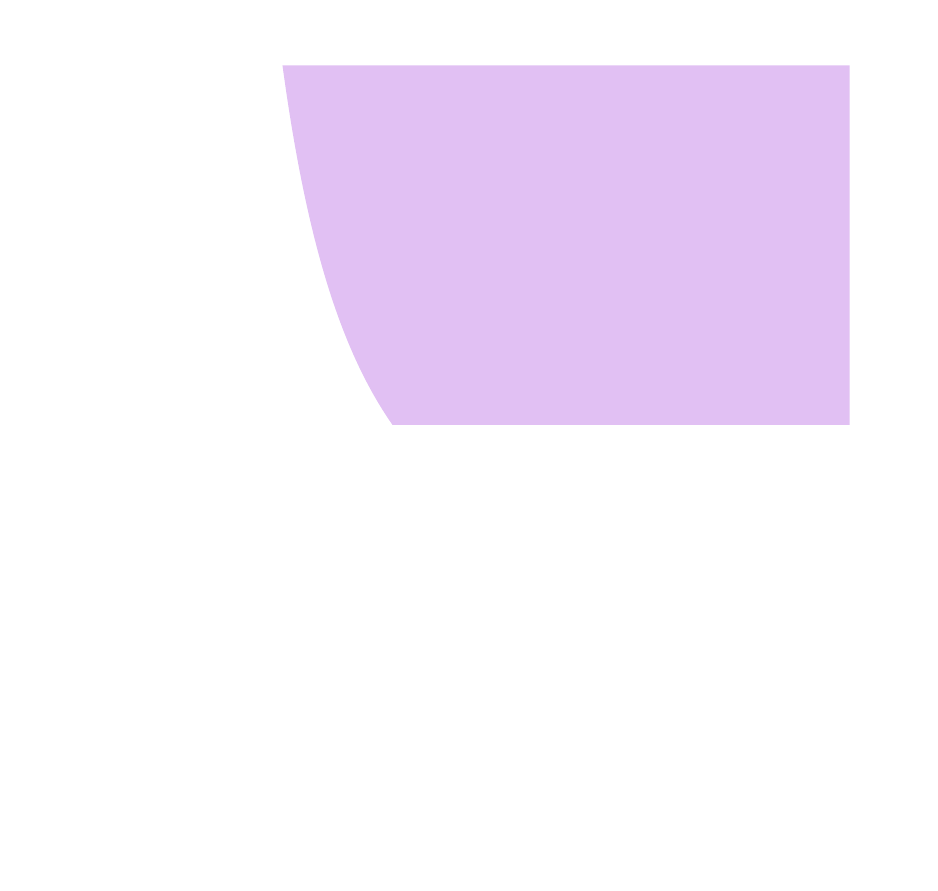}
\caption{}
\end{subfigure}
    \caption{
    (a)-(b) Positive zero locus $\VV_{>0}(F_\k)$ for the system in \Cref{ex:square_network} and geometric interpretation of the coset counting system, for different parameter values: (a)~$\k=(0.01,3,1,1)$ and (b)~$\k=(0.01,1,1,1)$.
 (c) The slice $\kappa_3=\kappa_4=1$ of the parameter space for the coset counting system.} 
    \label{fig:toric_components2}
\end{figure}

If $F$ is generically locally $\T_A$-toric over $\RR_{>0}$, then the coset counting system has generically a finite number of zeros and hence the generic number of cosets is bounded above by the mixed volume by Bernstein's theorem \cite{bernstein} (see also \cite[§7.5]{cox2005using}, as well as \cite{Gross2011degree} where Bernstein's theorem is applied to a similar system). Sharper bounds that take into account the dependencies among the parametric coefficients arise from Newton--Okunkov bodies \cite{obatake2022newton} and tropical methods \cite{HelminckRen2025,tropicalhomotopy}. This type of bounds can be quite far from the number of positive solutions, as they count solutions over $\CC^*$, but if the bound is $1$, then we conclude that $F$ is generically $\T_A$-toric.

In the next two subsections, we will discuss two other approaches to assert the system is toric,  that is, to confirm that the coset counting system has at most one solution for all $\k$.
We end this subsection with a short discussion on nondegeneracy of the coset counting system. 

\begin{lemma}
\label{lem:rank_of_coset_counting_jacobian}
Let $F$ be a vertical system that is $\T_A$-invariant for a matrix $A$ of full row rank, and let $H$ be the coset counting system \eqref{eq:system_number_of_cosets}. Let $\k\in(\CC^*)^m$ and $x^*\in\VV_{\CC^*}(F_\k)$ be such that $A\diag(x^*)A^\top$ is invertible (this is true for any $x^*\in\VV_{>0}(F_\k)$). Then
\[\rk(J_{H_{\k,Ax^*}}(x^*))=\rk(J_{F_{\k}}(x^*))+\rk(A)\,.\]
In particular, 
$x^*$ is a nondegenerate zero of $F_\k$ if and only if it is a nondegenerate zero of $H_{\k,Ax^*}$.
\end{lemma}
\begin{proof}
The assumed invertibility gives that we have a direct sum decomposition
\begin{equation}\label{eq:direct_sum}
\RR^n=\ker(A)\oplus \im(\diag(x^*) A^\top)\,.
\end{equation}
Furthermore, by invariance, it holds that $F(x^*\circ t^A)=0$ for all $t\in(\CC^*)^d$, which gives
\[0=\tfrac{\partial}{\partial t}F(x^*\circ t^A)\vert_{t=(1,\dots,1)}=J_{F_{\k}}(x^*)\diag(x^*)A^\top.\]
The lemma now follows by intersecting both sides of \eqref{eq:direct_sum} with $\ker(J_{F_{\k}}(x^*))$, which yields
\[\ker(J_{F_{\k}}(x^*))=\ker(J_{H_{\k,Ax^*}}(x^*))\oplus  \im(\diag(x^*) A^\top),\]
as $\ker(J_{H_{\k,Ax^*}}(x^*))=\ker(J_{F_{\k}}(x^*))\cap\ker(A)$.
\end{proof}

\Cref{lem:rank_of_coset_counting_jacobian} has the following interesting consequence for generic nondegeneracy.

\begin{proposition}
\label{prop:F_nondegenerate_iff_H_nondegenerate}
Let $F$ be a vertical system that is $\T_A$-invariant for a matrix $A$ of full row rank, and let $H$ be the coset counting system \eqref{eq:system_number_of_cosets}. Then $F$ is nondegenerate over $\CC^*$ if and only if $H$ is nondegenerate over $\CC^*$.
\end{proposition}

\begin{proof}
The reverse  implication is immediate, so we focus on  the forward implication. Nondegeneracy of $F$ implies that there is a nonempty Zariski open subset $\mathcal{U}$ of the incidence variety $\I=\{(\k,x)\in(\CC^*)^m\times(\CC^*)^n:F_\k(x)=0\}$ consisting of pairs $(\k,x)$ such that $x$ is a nondegenerate zero of $F_\k$. 
Since the projection $p\from\I\to(\CC^*)^n$ is surjective, it follows that $p(\mathcal{U})$ is a nonempty Zariski open subset of  $(\CC^*)^n$. By denseness,  $p(\mathcal{U})$  contains some $x^*\in (\CC^*)^n$ such that $A\diag(x^*) A^\top$ is invertible. The desired conclusion now follows from \Cref{lem:rank_of_coset_counting_jacobian}.
\end{proof}

\subsection{Injectivity}
Chemical reaction network theory has provided several methods to decide whether an augmented vertically parametrized system $(C(\k\circ x^M),Lx-b)$ has  two or more positive zeros for a choice of parameter values. 
One of the simplest methods decides whether the map $x\mapsto (C(\k\circ x^M),Lx)$ is injective on $\RR^n_{>0}$ for all  positive $\k$ (see \cite{wiuf2013determinant,Muller2015injectivity}), and  gives rise to the following sufficient  criterion {for a vertical system to be toric} over $\RR_{>0}$. Here, we formulate it in terms of a symbolic determinant, but we note that it can equivalently be formulated in terms of sign vectors, as discussed in \cite{Muller2015injectivity}.

\begin{theorem}\label{thm:injectivity}
Let $F$ be a vertical system with defining matrices $M\in\ZZ^{n\times m}$ and  $C\in\RR^{s\times m}$ of  rank $s$  with $\ker(C)\cap \RR^m_{>0}\neq \varnothing$. 
Assume that $F$ is $\T_A$-invariant for a full rank matrix $A\in\ZZ^{(n-s) \times n}$.  
For variables $\mu=(\mu_1,\dots,\mu_m)$ and $\lambda=(\lambda_1,\dots,\lambda_n)$, form the symbolic matrix 
\[{Q}_{\mu,\lambda}:=\begin{bmatrix}C\diag(\mu)M^\top\diag(\lambda)\\A\end{bmatrix}\,.\]
If $\det(Q_{\mu,\lambda})$ is a nonzero polynomial in $\RR[\mu,\lambda]$, with all nonzero coefficients having the same sign, 
 then $F$ is $\T_A$-toric over $\RR_{>0}$. 
\end{theorem}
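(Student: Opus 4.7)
The plan is to combine \Cref{prop:coset_counting} with a classical sign-based injectivity criterion in the spirit of \cite{wiuf2013determinant,Muller2015injectivity}. By \Cref{prop:coset_counting}, once local toricity is in place, $F$ will be $\T_A$-toric over $\RR_{>0}$ as soon as the map
\[\Phi_\k\from \RR^n_{>0} \to \RR^n\,, \quad x\mapsto \bigl(C(\k\circ x^M),\, Ax\bigr)\]
is injective for every $\k\in\RR^m_{>0}$. I would first verify the nondegeneracy needed to invoke \Cref{thm:characterization_local_toricity}: the sign hypothesis forces $\det(\mathcal{L}_{\mu,\alpha})$ to be nonvanishing on $\RR^{m+n}_{>0}$, so specializing $\mu$ to any vector in $\ker(C)\cap\RR^m_{>0}$ (which exists by assumption) yields \eqref{eq:nondeg_conditionA}. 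Hence $F$ is generically locally $\T_A$-toric over $\RR_{>0}$.

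The heart of the argument is injectivity. Assume $\Phi_\k(x)=\Phi_\k(y)$ with $x,y\in\RR^n_{>0}$ and set $r_j=\log x_j-\log y_j$. Introduce the strictly positive correction factors
\[\phi_k=\frac{e^{\langle M_k,\,r\rangle}-1}{\langle M_k,\,r\rangle}\,,\qquad \psi_j=\frac{e^{r_j}-1}{r_j}\,,\]
with the convention that these equal $1$ wherever the denominator is zero. Using the identities $x_j-y_j=y_j\psi_j r_j$ and $x^{M_k}-y^{M_k}=y^{M_k}\phi_k\langle M_k,r\rangle$, a direct computation yields
\[C\diag(\mu)M^\top r=0\quad\text{and}\quad A\diag(y\circ\psi)\,r=0\,,\]
where $\mu=\k\circ y^M\circ\phi\in\RR^m_{>0}$. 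Setting $\alpha=(y\circ\psi)^{-1}\in\RR^n_{>0}$, we recognize
\[\mathcal{L}_{\mu,\alpha}\diag(\alpha^{-1})=\begin{bmatrix}C\diag(\mu)M^\top\\ A\diag(y\circ\psi)\end{bmatrix},\]
so $r\in\ker(\mathcal{L}_{\mu,\alpha})$. The sign hypothesis gives $\det(\mathcal{L}_{\mu,\alpha})\neq 0$, forcing $r=0$ and hence $x=y$.

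Combining injectivity of $\Phi_\k$ with the bijection of \Cref{prop:coset_counting} gives $\#(\VV_{>0}(F_\k)/\T_A^{>0})\leq 1$ for every $\k\in\RR^m_{>0}$; together with the nondegeneracy established in the first step and \Cref{prop:vertical}(i), this yields $\T_A$-toricity of $F$ over $\RR_{>0}$. The main technical obstacle is the mean-value factorization: one has to recognize that both $F_\k(x)-F_\k(y)$ and $Ax-Ay$ can be written as $N\cdot r$ for a single matrix $N$ whose block structure matches $\mathcal{L}_{\mu,\alpha}$ up to a nonsingular diagonal scaling. Once this identification is made, the sign hypothesis closes the argument.
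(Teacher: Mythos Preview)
Your argument follows the same overall architecture as the paper's proof: establish nondegeneracy of $F$, then show that the coset counting system has at most one positive solution via injectivity of $x\mapsto(F_\k(x),Ax)$, and conclude using \Cref{prop:coset_counting}. The paper simply cites \cite[Thm.~2.13]{Muller2015injectivity} for the injectivity step, whereas you unpack that citation with a direct mean-value factorization. This is the classical argument behind \cite{wiuf2013determinant,Muller2015injectivity}, so your proof is a self-contained version of theirs rather than a genuinely different route.

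One small slip: from the displayed identity $\mathcal{L}_{\mu,\alpha}\diag(\alpha^{-1})=\begin{bmatrix}C\diag(\mu)M^\top\\A\diag(y\circ\psi)\end{bmatrix}$ you obtain $\bigl(\mathcal{L}_{\mu,\alpha}\diag(\alpha^{-1})\bigr)r=0$, so it is $\diag(\alpha^{-1})r$, not $r$, that lies in $\ker(\mathcal{L}_{\mu,\alpha})$. The conclusion is unaffected, since the sign hypothesis makes $\det(\mathcal{L}_{\mu,\alpha})\neq 0$ for positive $\mu,\alpha$, and $\diag(\alpha^{-1})$ is invertible, so $r=0$ either way. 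Also, the detour through ``generically locally $\T_A$-toric'' in your first paragraph is unnecessary: once nondegeneracy is checked and the coset count is at most one for every $\k$, the definition of $\T_A$-toric is satisfied directly (for $\k\in\Z_{>0}$ the count is automatically at least one).
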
 
\begin{proof}
{It follows from \Cref{prop:vertical} combined with \Cref{prop:F_nondegenerate_iff_H_nondegenerate} that $F$ is nondegenerate over $\RR_{>0}$.}
By \cite[Thm.~2.13]{Muller2015injectivity}, the assumption on  $\det(Q_{\mu,\lambda})$ is equivalent to 
the polynomial map $F_{\k}$ being injective on $(x^*+\ker(A))\cap\RR^n_{>0}$ for all $x^*\in\RR^n_{>0}$ and $\k\in\RR^m_{>0}$. This implies that for the coset counting system $H$ in \eqref{eq:system_number_of_cosets}, $\#\VV_{>0}(H_{\k,b})\leq 1$ for all $\k$ and $b$, and the statement follows from \Cref{prop:coset_counting}.
\end{proof}

\begin{example}\label{ex:idhkp3}
The vertical system $F$ in \Cref{ex:IDH} satisfies the conditions in \Cref{thm:injectivity} for the matrix $A$   in \Cref{ex:IDH_invariance}, since 
\begin{align*}\det(Q_{\mu,\lambda})=&
-\lambda_{1}\lambda_{3}\lambda_{4}\mu_{1}\mu_{3}\mu_{4}-
\lambda_{1}\lambda_{4}\lambda_{5}\mu_{1}\mu_{4}\mu_{6}-
\lambda_{2}\lambda_{3}\lambda_{4}\mu_{1}\mu_{3}\mu_{4}\\[-0.5em]
&-\,\lambda_{2}\lambda_{4}\lambda_{5}\mu_{1}\mu_{4}\mu_{6}-
\lambda_{3}\lambda_{4}\lambda_{5}\mu_{2}\mu_{4}\mu_{6}
-\lambda_{3}\lambda_{4}\lambda_{5}\mu_{3}\mu_{4}\mu_{6}\,
\end{align*}
is a nonzero polynomial with all coefficients of the same sign. Hence
  $F$ is $\T_A$-toric over $\RR_{>0}$. 
\end{example}

\subsection{Constant number of cosets.}
In practice, it often holds that 
$\#(\VV_{>0}(F_\k)/\T_A^{>0})$ is constant with respect to $\k\in\RR^m_{>0}$. When this is the case, for all $\k$, $\#(\VV_{>0}(F_\k)/\T_A^{>0})$  can be inferred from 
the number of zeros of the coset counting system $H$ from  \eqref{eq:system_number_of_cosets} for any fixed  $\k\in\RR^m_{>0}$. 
 
The following is a sufficient criterion for $\#(\VV_{>0}(F_\k)/\T_A^{>0})$ to be constant for all $\k\in\RR^m_{>0}$. 
In intuitive terms, it rules out the three ways in which the number of zeros of $H$ can change as the parameters vary: 
(i) a zero crossing a coordinate hyperplane, (ii) a degenerate zero arising, or (iii) a zero going to infinity.

\begin{proposition}
\label{prop:constant_number_of_cosets}
Let $F$ be a vertical system with defining matrices $M\in\ZZ_{\geq 0}^{n\times m}$ and $C\in\RR^{s\times m}$ of rank $s$   with $\ker(C)\cap\RR^m_{>0} \neq \varnothing$. Assume that $F$ is $\T_A$-invariant  for a matrix $A\in\ZZ^{(n-s)\times n}$ of rank $n-s$. Suppose the following hold:\\[-0.8em]
\begin{enumerate}[label=(\roman*)]
\setlength{\itemsep}{0.2em}
    \item\label{it:no_boundary_zeros}  $\VV_{\RR}{(C(\k\circ x^M),Ax-b)}\cap(\RR^n_{\geq 0}\setminus\RR^n_{>0})=\varnothing$ for all $\k\in\RR^m_{>0}$ {and some $b\in A(\RR^n_{>0})$}.
   
    \item\label{it:nondegeneracy} 
    {$\rk(C\diag(w)M^\top) = s$ for all $w\in \ker(C)\cap \RR^m_{>0}$}.
    
    \item\label{it:compactness} $\row(A)\cap\RR^n_{>0}\neq\varnothing$.
\end{enumerate}
\vspace{0.2em}
Then $\#(\VV_{>0}(F_\k)/\T_A^{>0})$ is constant with respect to $\k\in\RR^m_{>0}$, and in particular $\Z_{>0} = \RR^m_{>0}$.
\end{proposition}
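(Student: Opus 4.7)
The plan is to reformulate the claim via the coset counting system and then establish constancy through a covering space argument on an incidence variety. By \Cref{prop:coset_counting}, for each $\k\in\RR^m_{>0}$ we have $\#(\VV_{>0}(F_\k)/\T_A^{>0})=\#\VV_{>0}(H_{\k,b})$, so it suffices to show that $\#\VV_{>0}(H_{\k,b})$ is a nonzero constant as $\k$ varies over $\RR^m_{>0}$. To this end, introduce the incidence variety
\[W=\{(\k,x)\in\RR^m_{>0}\times\RR^n_{>0}:H_{\k,b}(x)=0\}\]
and study the projection $\pi\from W\to\RR^m_{>0}$. I will show $\pi$ is a proper local diffeomorphism, hence a covering map of the connected manifold $\RR^m_{>0}$, yielding a constant fiber cardinality.

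For the local diffeomorphism property, a direct computation of the $x$-Jacobian of $H_{\k,b}$ at a positive zero $x$ gives
\[J_x H_{\k,b}(x)=\begin{bmatrix}C\diag(\k\circ x^M)M^\top\diag(x^{-1})\\ A\end{bmatrix}.\]
Since $x\in\VV_{>0}(F_\k)$ forces $w:=\k\circ x^M\in\ker(C)\cap\RR^m_{>0}$, and $h:=x^{-1}\in\RR^n_{>0}$, hypothesis \ref{it:nondegeneracy} says this matrix has rank $n$. The implicit function theorem then gives that $W$ is a smooth $m$-manifold and $\pi$ is a local diffeomorphism.

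The properness argument is the main technical point. Take a sequence $(\k_k,x_k)\in W$ with $\k_k\to\k^*\in\RR^m_{>0}$; I want to extract a subsequence converging inside $W$. By \ref{it:compactness}, pick $u\in\RR^{n-s}$ with $v:=A^\top u\in\RR^n_{>0}$, so that $v\cdot x_k=u^\top b$ for all $k$. Hence $\{x_k\}$ lies in the compact set $K=\{x\in\RR^n_{\geq 0}:v\cdot x=u^\top b\}$ (here the assumption $M\in\ZZ_{\geq 0}^{n\times m}$ is used so that $F$ extends continuously to $\RR^n_{\geq 0}$). A subsequence converges to some $x^*\in K\subseteq\RR^n_{\geq 0}$, and by continuity $F_{\k^*}(x^*)=0$ and $Ax^*=b$. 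Hypothesis \ref{it:no_boundary_zeros} then rules out $x^*$ lying on the boundary, so $x^*\in\RR^n_{>0}$ and $(\k^*,x^*)\in W$. Thus $\pi$ is proper, and being a proper local diffeomorphism onto the connected base $\RR^m_{>0}$, it is a finite covering map with constant fiber cardinality.

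It remains to see that this constant is nonzero. By the hypothesis $\ker(C)\cap\RR^m_{>0}\neq\varnothing$ and the equivalence in \Cref{lem:positive_kernel_nonempty} (with $\K=\RR^m_{>0}$), there exists $\k^*\in\RR^m_{>0}$ with $\VV_{>0}(F_{\k^*})\neq\varnothing$. By $\T_A$-invariance, this variety contains at least one $\T_A^{>0}$-coset, which by the Birch-type bijection of \Cref{prop:coset_counting} contributes one positive zero to $H_{\k^*,b}$. Thus $\#\VV_{>0}(H_{\k^*,b})\geq 1$, and by the just-established constancy, $\#\VV_{>0}(H_{\k,b})\geq 1$ for every $\k\in\RR^m_{>0}$, giving $\Z_{>0}=\RR^m_{>0}$. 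The main obstacle is properness of $\pi$: bounded fibers come from \ref{it:compactness}, but without \ref{it:no_boundary_zeros} a sequence of positive zeros could limit to the boundary of $\RR^n_{>0}$ and be lost.
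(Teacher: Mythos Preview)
Your proof is correct and follows essentially the same strategy as the paper's: both set up the incidence variety for the coset counting system $H$, use hypothesis~\ref{it:nondegeneracy} and the implicit function theorem to obtain a local diffeomorphism, and combine~\ref{it:compactness} (compactness of the affine slice in $\RR^n_{\geq 0}$) with~\ref{it:no_boundary_zeros} (no boundary zeros) to control limits. The only difference is packaging: you invoke the standard fact that a proper local diffeomorphism over a connected base is a covering map, whereas the paper argues openness and closedness of $\pi(\E)$ separately and then proves local constancy of the fiber cardinality by hand.
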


\begin{proof}
Let $H$ be the coset counting system from \eqref{eq:system_number_of_cosets} and consider the incidence correspondence 
\[\I=\{(\k,x)\in\RR^m_{>0}\times\RR^n_{\geq 0}:H_{\k, b}(x)=0\}\]
as well as the projection $\pi\from\I\to\RR^m_{>0}$ to parameter space. Condition (i) gives that $\I\subseteq\RR_{>0}^m\times\RR^n_{>0}$.
{Condition (ii) implies that $\rk(J_{F_\k}(x^*))=s$ for all $(\k,x^*)\in \I$
and hence, by \Cref{lem:rank_of_coset_counting_jacobian},  all zeros of $H_{\k,b}$ in $\RR^n_{>0}$ are nondegenerate.} It follows now from \cite[Prop.~3.3]{FeliuHenrikssonPascual2025vertical} that $\pi$ is an open map and lacks critical points.

We now prove that $\pi$ is surjective, i.e., that $\Z_{>0}=\RR_{>0}^m$. Since $\pi(\I)$ is nonempty and open, and $\RR^m_{>0}$ is connected, it suffices to prove that $\pi(\I)$ is closed. Condition (iii) gives that $\mathcal{P}:=\VV_\RR(Ax-b)\cap\RR^n_{\geq0}$ is compact \cite{benisrael}, which implies that the canonical projection $\RR^m_{>0}\times\mathcal{P}\to\RR^m_{>0}$ is a closed map. Since $\I$ is closed in $\RR^m_{>0}\times\mathcal{P}$, this shows that $\pi(\I)$ is closed.
 
Our next goal is to show that $\#\pi^{-1}(\k)$ is constant for all $\k$. As $\RR^m_{>0}$ is connected, it suffices to show that the cardinality is locally constant.  Given $\k^*\in \RR^m_{>0}$, write 
$\pi^{-1}(\k^*)=\{(\k^*,x_1),\ldots,(\k^*,x_\ell)\}$ for $x_1,\ldots,x_\ell\in\VV_{>0}(F_{\k^*})$. The absence of critical points for $\pi$ allows us to find, for each $i\in[\ell]$,  an open neighborhood $U_i\subseteq\I$ of $(\k^*,x_i)$ and an open neighborhood $V_i\subseteq\RR^m_{>0}$ of $\k^*$ such that $\pi_{\vert U_i}\from U_i\to V_i$ is a homeomorphism. 
These open sets can be chosen such that $U_i\cap U_j=\varnothing$ for $i\neq j$. Furthermore, 
by letting $V=\cap_{i=1}^\ell V_i$ and replacing $U_i$ by $(\pi_{\vert U_i})^{-1}(V)$, it holds that  $\pi(U_i)=V$ for all $i\in[\ell]$. 
Now, form the set $Q=\I\setminus(U_1\cup\cdots\cup U_\ell)$, which is closed in $\RR^m_{>0}\times \mathcal{P}$, which in turn gives that $\pi(Q)$ is closed in $\RR^m_{>0}$. 
It now holds that $W:= V\setminus\pi(Q)\subseteq\RR^m_{>0}$ is an open neighborhood of $\k^*$ such that $\#\pi^{-1}(\k)=\ell$ for all $\k\in W$.
\end{proof}

Condition \ref{it:no_boundary_zeros}  in \Cref{prop:constant_number_of_cosets} can efficiently be checked with SAT-SMT solvers \cite{smt-lib}. Alternatively, the theory of siphons from chemical reaction network theory provides a  sufficient condition \cite{AngeliLeenheerSontag2007,ShiuSturmfels2010}. Condition \ref{it:nondegeneracy} can be checked by finding a parametrization of $\ker(C)\cap\RR^m_{>0}$ via extreme rays of $\ker(C)\cap\RR^m_{\geq0}$ and evaluating a symbolic determinant.
Condition \ref{it:compactness} can be verified by deciding whether the polyhedral cone $\row(A)\cap\RR^n_{\geq 0}$ has nonempty interior. 

\begin{example}
\label{ex:trianglenetwork_revisited}
We use \Cref{prop:constant_number_of_cosets} to prove that $F$ in \eqref{eq:triangle} is toric over $\RR_{>0}$. 
From \Cref{alg:invariance} we find the maximal-rank matrix $A=\begin{bmatrix}2 & 3\end{bmatrix}$ for which $F$ is $\T_A$-invariant. 
It is immediate to see that conditions \ref{it:no_boundary_zeros} and \ref{it:compactness} in \Cref{prop:constant_number_of_cosets} hold. For \ref{it:nondegeneracy}, we find
\[\ker(C) \cap \RR^4_{>0} = \big\{\lambda_1(2, 0, 0, 1)+\lambda_2 (1, 1, 0, 0)+\lambda_3(0, 0, 2, 1)+\lambda_4(0, 1, 1, 0) : \lambda\in \RR^4_{>0}\big\}\,    \]
and hence the determinant  of any of the  matrices in condition \ref{it:nondegeneracy}
can be written as $- (9h_1+ 4h_2) ( 2\lambda_{1}+4 \lambda_{3}+ \lambda_{4})$ 
for some $\lambda\in \RR^4_{>0}$ and $h\in \RR^2_{>0}$. 
In particular, it does not vanish and condition \ref{it:nondegeneracy} holds. 
Next, we solve the coset counting system
\begin{equation}
\label{eq:triangle_coset}
H = \big( (\kappa_1-\kappa_2)x_1^3x_2^2+\kappa_3x_2^4-2\kappa_4x_1^6,\: 2x_1+3x_2 - 5 \big) \,
\end{equation}
numerically for $\k^*=(1,1,1,1)$ using \texttt{HomotopyContinuation.jl} \cite{homotopycontinuation} with certification \cite{breiding2020certifying}. The BKK bound of the system is 6, and we find 6 certifiably distinct zeros in $(\CC^*)^2$, of which precisely one is certifiably real and positive, whereas the other solutions are certifiably nonreal or nonpositive. Hence, we conclude that the system 
has a unique zero in $\RR^2_{>0}$, and \Cref{prop:constant_number_of_cosets} gives that
 $F$ is $\T_A$-toric, with $\Z_{>0}=\RR^4_{>0}$. 
In particular, 
 for each $\k\in\RR_{>0}^4$, the positive zero locus admits the monomial parametrization 
\[\RR_{>0}\to \VV_{>0}(F_\k)\,,\quad t\mapsto (\alpha_1 t^2,\alpha_2 t^3)\,,\]
where $\alpha=(\alpha_1,\alpha_2)$ is the unique positive zero of \eqref{eq:triangle_coset}. 
We point out that $F$ is not toric over $\RR^*$, since  $\VV_{\RR^*}(F_{\k^*})$ has two irreducible components. 
\end{example}

\subsection{Algorithm for $\GG=\RR_{>0}$}

Given matrices $C\in \RR^{s\times m}$ of rank $s$ and $M\in \ZZ^{n\times m}$, we have presented several results to address {whether the associated vertical system $F$ is (generically) toric}.
We gather these in \Cref{alg:summary}.
A Julia implementation of the algorithms is available in the GitHub repository
\begin{center}
    \url{https://github.com/oskarhenriksson/ToricVerticalSystems.jl}\,.
\end{center}
The implementation relies on the packages \texttt{Oscar.jl} \cite{OSCAR} for polyhedral and symbolic computations and \texttt{HomotopyContinuation.jl} \cite{homotopycontinuation} for certified numerical solving of the coset counting system.

\begin{algorithm}
\caption{Summary for $\RR_{>0}$}
\SetAlgoNoLine
\fontsize{10}{13}\selectfont
\KwIn{Matrices $C\in \RR^{s\times m}$ of  rank $s$,  $M\in\mathbb{Z}^{n\times m}$}
\KwOut{Whether $F=C(\k\circ x^M)$ is nondegenerate, (generically) locally $\T_A$-toric, or (generically) $\T_A$-toric over $\RR_{>0}$}

Run \Cref{alg:invariance}. Proceed if a matrix $A\in \ZZ^{d\times n}$ is returned (which implies $\ker(C)\cap \RR^m_{>0}\neq \varnothing$)

\# \textit{Decide nondegeneracy}

Generate a random $w\in \ker(C)$ and compute $r:=\rk(C \diag(w) M^\top)$

\If{$r<s$ and $\rk(C \diag(w) M^\top)<s$ for all $w\in \ker(C)$}{
    \Return{$F$ is degenerate}
}

\If{$s+d<n$}{
    \Return{$F$ is not generically locally $\T_A$-toric over $\RR_{>0}$}
}

\# \textit{At this point we know $F$ is generically locally $\T_A$-toric over $\RR_{>0}$}\\
\# \textit{We proceed to study the number of cosets}

\label{line:injectivity_test}
\If{All coefficients of $\det(Q_{\mu,\lambda})$ have the same sign}{
    \Return{$F$ is $\T_A$-toric over $\RR_{>0}$}
}

Find $\mathit{mv}:=$ mixed volume of the coset counting system

\If{$\rk(C \diag(w) M^\top)=s$ for all $w\in \ker(C)\cap \RR_{>0}^m$}{
    \If{$mv=1$}{
        \Return{$F$ is $\T_A$-toric over $\RR_{>0}$}
    }
    \If{conditions \ref{it:no_boundary_zeros}  and \ref{it:compactness} 
    in \Cref{prop:constant_number_of_cosets} hold}{
        Set $r:=$ number of solutions in $\RR^n_{>0}$ of the coset counting system for a random $\k\in \RR^m_{>0}$
        
        \If{$r=1$}{
            \Return{$F$ is $\T_A$-toric over $\RR_{>0}$}
        }
        \Return{$F$ is locally $\T_A$-toric over $\RR_{>0}$ with $r$ cosets}
    }
    \Return{$F$ is locally $\T_A$-toric over $\RR_{>0}$ with 
    at most $\mathit{mv}$ number of cosets}
}

\If{$\mathit{mv}=1$}{
    \Return{$F$ is generically $\T_A$-toric over $\RR_{>0}$}
}

\Return{$F$ is generically locally $\T_A$-toric over $\RR_{>0}$ with generically {at most} $\mathit{mv}$ cosets}
\label{alg:summary}
\end{algorithm}

\section{Reaction-network-theoretic perspectives {on toric invariance}}
\label{sec:crnt_perspectives}

In this final section, we focus on the motivating scenario, namely that of reaction networks.  We introduce them and their vertical steady state systems in \Cref{subsec:crnt_intro}, and show how (local) toric structure can be used to analyze them in \Cref{subsec:crnt_applications}. We  discuss model reductions via intermediates in \Cref{subsec:intermediates},  and apply our algorithms to a database of biological networks 
in \Cref{subsec:odebase}.
Finally, we compare our criteria to previous  work 
in \Cref{subsec:other_flavors}. 

\subsection{Reaction networks}
\label{subsec:crnt_intro}
A \term{reaction network} on an ordered set $\S=\{X_1,\ldots,X_n\}$ of species is a collection of $m$ reactions between formal nonnegative linear combinations of the species (called complexes):
\begin{equation}\label{eq:reaction_network}\sum_{i=1}^n \alpha_{ij} X_i \longrightarrow \sum_{i=1}^n \beta_{ij} X_i\,,\qquad j\in [m]\,,\end{equation}
where $\alpha_{ij},\beta_{ij}\in\ZZ_{\geq 0}$. 
The net production of the species in each of the reactions is encoded by the \term{stoichiometric matrix} $N=(\beta_{ij}-\alpha_{ij})\in\ZZ^{n\times m}$.

The concentration of the respective species is denoted by a vector $x=(x_1,\ldots,x_n)\in\RR^n_{\geq 0}$. 
Under common assumptions,
these concentrations vary according to an autonomous ordinary differential equation system of the form
\begin{equation}
\label{eq:ODE}
\tfrac{dx}{dt}=N(\k\circ x^M)\,, \qquad x\in \RR^n_{\geq 0}\, ,\end{equation}
where   $M  \in \ZZ^{n\times m}$ is called the kinetic matrix and $\k=(\kappa_1,\ldots,\kappa_m)\in\RR^m_{>0}$ is a vector of rate constants, which typically are viewed as unknown parameters. 

The main example of this construction arises under the mass-action assumption, where $M=(\alpha_{ij})$ is the reactant matrix consisting of the coefficients of the left-hand sides of the reactions. In this case, $\RR^n_{\geq 0}$ is forward-invariant by the ODE system \eqref{eq:ODE}.

Letting $s=\rk(N)$, we  choose a matrix $C\in \RR^{s\times m}$ of rank $s$ such that $\ker(C)=\ker(N)$. Then, the steady states of \eqref{eq:ODE} are the zeros of the vertically parametrized system 
\[F = C(\k\circ x^M)\in\RR[\k,x^\pm]^s \, .\] 
We call any such system \term{the steady state system} (with a choice of $C$   implicitly made). 
One is particularly interested in steady states with \emph{strictly positive} entries.

The trajectories of \eqref{eq:ODE} are confined in parallel translates of $\im(N)$, which can be written as
\begin{equation}\label{eq:classes}
\{x\in\RR^n_{\geq 0}:Lx- b=0\},\qquad b\in \RR^{n-s}
\end{equation}
for a matrix
$L\in \RR^{(n-s)\times n}$ whose rows form a basis for the left-kernel of $N$. 
We call such a matrix $L$ a \term{conservation law matrix}. 
The sets in \eqref{eq:classes} are called \term{stoichiometric compatibility classes}.  The steady states in a given stoichiometric compatibility class are therefore the positive zeros of the augmented vertical  system 
\begin{equation}\label{eq:augmented_crn}
(C(\k\circ x^M), Lx-b)\in\RR[\k,b,x^\pm]^n \, .
\end{equation}

 \begin{figure}[t]
   \centering
\begin{subfigure}[b]{0.28\textwidth}
  {\small   \begin{align*}
X_1 + X_2 & \ce{<=>[\kappa_1][\kappa_2]} X_3 \ce{->[\kappa_3]} X_1 + X_4\\
X_3 +X_4 & \ce{<=>[\kappa_4][\kappa_5]} X_5 \ce{->[\kappa_6]} X_2 + X_3\,
   \end{align*}}
\caption{Network for \Cref{ex:IDH}}
\end{subfigure}
~
\begin{subfigure}[b]{0.28\textwidth}
{\small \[\begin{tikzcd}[column sep=small,ampersand replacement=\&]
    6X_1 \arrow[rd,end anchor={[yshift=-0.3ex,xshift=0.3ex]},"\kappa_{4}"'] \&  \& 3X_1 + 2X_2
    \arrow[ld, rightharpoondown, shift right=0.3ex,end anchor={[yshift=-0.3ex,xshift=-0.3ex]},"\kappa_{2}"',outer sep=-0.3ex] \arrow[ll,"\kappa_{1}"'] \\  \& 4X_2 \arrow[ru, rightharpoondown, shift right=0.3ex,start anchor={[yshift=-0.3ex,xshift=-0.3ex]},"\kappa_{3}"',outer sep=-0.3ex] \& 
    \end{tikzcd}\]}
\caption{Network for \eqref{eq:triangle}}
\end{subfigure}
~
\begin{subfigure}[b]{0.3\textwidth}
{\small  \[\begin{tikzcd}[ampersand replacement=\&]
	9X_1 \& 3X_1 + 4X_2 \\
	6X_1 + 2X_2 \& 6X_2\,.
	\arrow["{\kappa_{1}}", from=1-1, to=1-2]
	\arrow["{\kappa_{2}}", from=1-2, to=2-2]
	\arrow["{\kappa_{3}}"', from=2-2, to=2-1]
	\arrow["{\kappa_{4}}", from=2-1, to=1-1]
\end{tikzcd}\]}
\caption{Network for \Cref{ex:square_network}}
\end{subfigure}
    \caption{(a)  is a model of the IDHKP-IDH system in bacterial cell \cite[Fig. 3]{Shinar2010structural}; (b)  is a variation of the classical triangle network that appears in several places in the literature (e.g., \cite[Eq.~7-2]{Horn1972general}, \cite[Ex.~1]{Craciun2009toric},  \cite[Ex.~2.3]{Millan2012toricsteadystates}); (c) 
     is a variation of a classical network studied in \cite[§7]{Horn1972general}.} 
    \label{fig:networks}
\end{figure}

Many of the examples of vertical systems in the previous sections are steady state systems of reaction networks with mass-action kinetics; see \Cref{fig:networks}.

\subsection{Consequences of toric invariance}
\label{subsec:crnt_applications}
{As alluded to in the introduction, the following {three} algebraic properties play a central role in the 
(algebraic)
study of reaction networks:}
\begin{itemize}
    \item The network is said to have the capacity for \term{multistationarity} if system \eqref{eq:augmented_crn} admits at least two positive zeros for some choice of $\k\in\RR^m_{>0}$ and $b\in \RR^{n-s}$. 
    \item A network is said to have \term{absolute concentration robustness} (or \term{ACR} for short) with respect to {$X_i$} if     
    $\pi_i(\VV_{>0}(F_\k))$ consists of at most a single point for all $\k\in\RR^m_{>0}$, where $\pi_i\from\RR^n\to\RR$ denotes the canonical projection onto the $i$th factor.
    \item A polynomial $f\in \RR(\k)[x^\pm]$ 
   is said to be a  \term{steady state invariant} if for all $\k\in \RR^m_{>0}$, $f$ specializes to $\k$ and  $\VV_{>0}(F_\k) \subseteq \VV_{>0}(f_\k)$. 
\end{itemize}

\smallskip

See \Cref{fig:crnt_applications} for illustrations. These three problems have in common that they become much easier to address under the assumption of toric invariance,  only by knowing the exponent matrix $A$
(which we have seen is easy to determine). We now treat each property separately.

\begin{figure}
\definecolor{varietyblue}{RGB}{0,90,170}
\definecolor{classred}{RGB}{126,12,12}
\definecolor{planegreen}{RGB}{12,100,12}
    \centering
    \begin{subfigure}[b]{0.3\textwidth}
    \centering
    \begin{tikzpicture}
    \node[anchor=south west, inner sep=0] (img) at (0,0)
    {\includegraphics[height=0.7\textwidth]{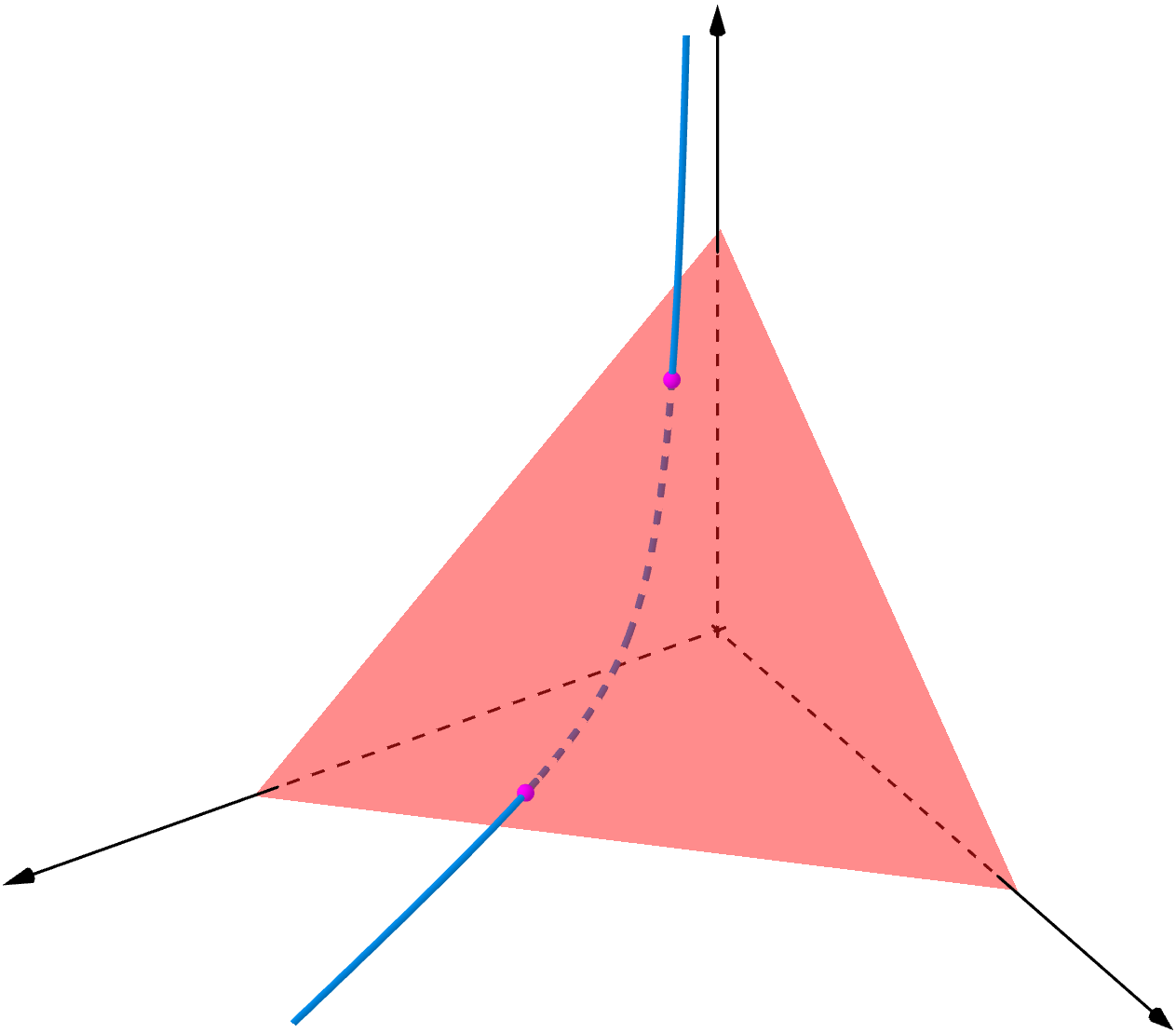}};
    \begin{scope}[x={(img.south east)}, y={(img.north west)}]
    \node[font=\tiny, varietyblue] at (0.4,0.9) {$\VV_{>0}(F_\k)$};
    \node[font=\tiny, anchor=west, classred] at (0.0,0.6) {$\VV_{>0}(Lx-b)$};
    \end{scope}
    \end{tikzpicture}
    \caption{Multistationarity}
    \end{subfigure}
    \begin{subfigure}[b]{0.3\textwidth}
    \centering
    \begin{tikzpicture}
    \node[anchor=south west, inner sep=0] (img) at (0,0)
    {\includegraphics[height=0.7\textwidth]{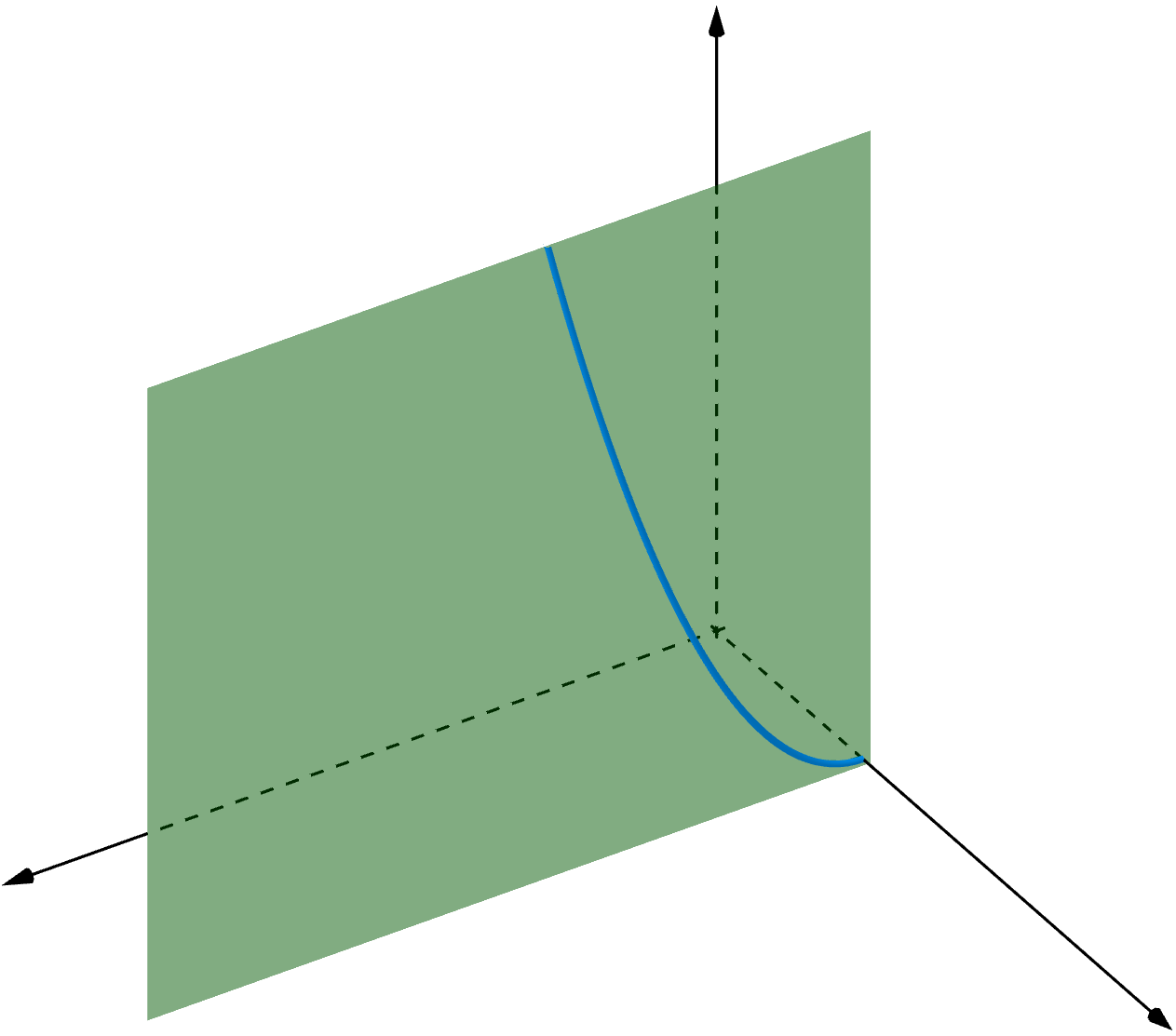}};
    \begin{scope}[x={(img.south east)}, y={(img.north west)}]
    \node[font=\tiny, varietyblue] at (0.3,0.8) {$\VV_{>0}(F_\k)$};
    \node[font=\tiny, anchor=west, planegreen] at (0.4,0.06) {$\VV_{>0}(x_i-c_\k)$};
    \end{scope}
    \end{tikzpicture}
    \caption{ACR}
    \end{subfigure}
    \begin{subfigure}[b]{0.3\textwidth}
    \centering
    \begin{tikzpicture}
    \node[anchor=south west, inner sep=0] (img) at (0,0)
    {\includegraphics[height=0.5\textwidth]{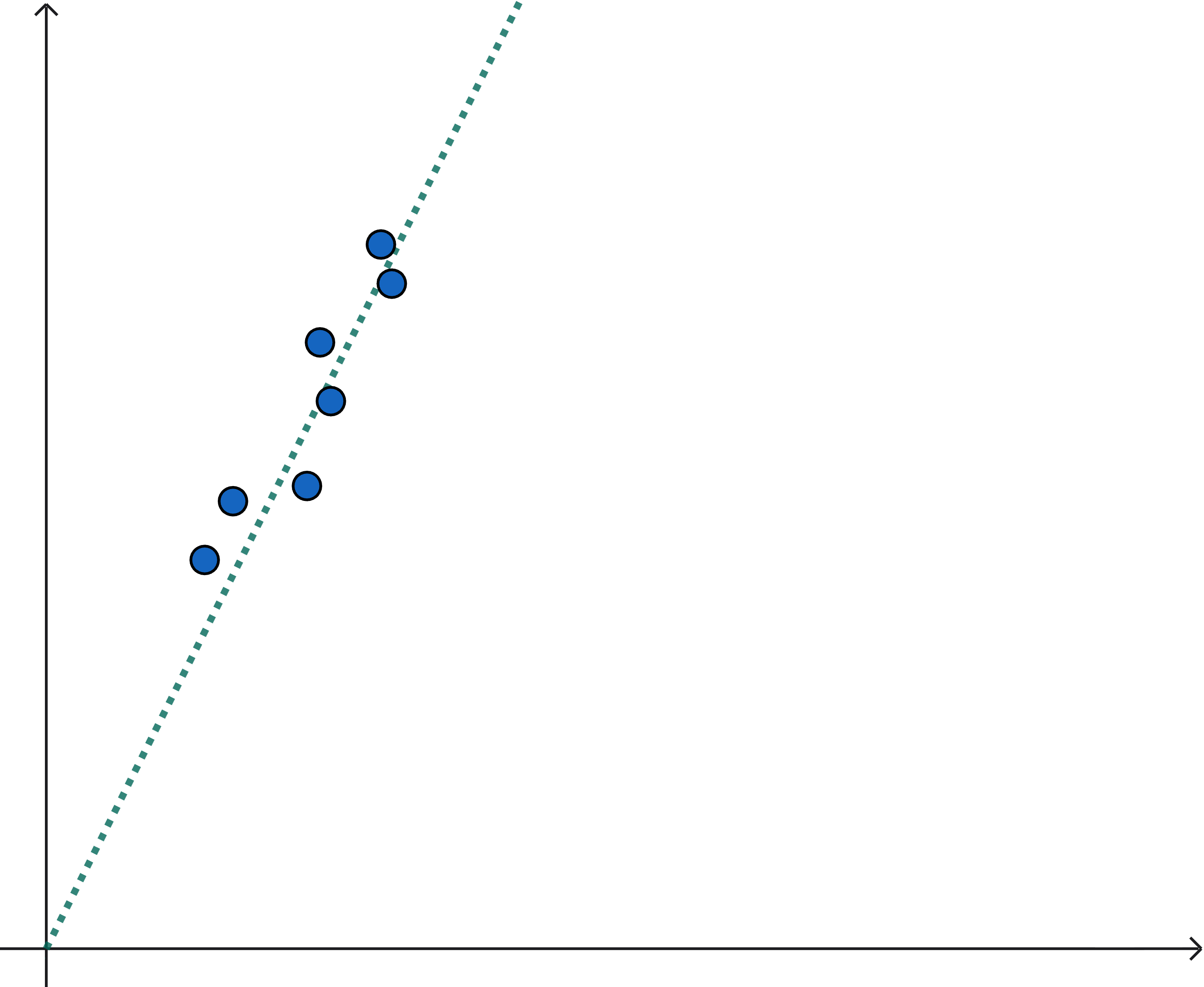}};
    \begin{scope}[x={(img.south east)}, y={(img.north west)}]
    \node[font=\tiny, anchor=south] at (0.02,0.98) {$x^{u_1}$};
    \node[font=\tiny, anchor=west] at (0.98,0.05) {$x^{u_2}$};
    \end{scope}
    \end{tikzpicture}
    \vspace{0.8em}
    \caption{Steady state invariants}
    \end{subfigure}
    \caption{Schematic illustration of three applications of toric structure for reaction networks.}
    \label{fig:crnt_applications}
\end{figure}

\smallskip
\noindent
\textit{Multistationarity.} Multistationarity might imply that trajectories converge to different steady states for the same parameter values and has been associated with robust cellular decision making. 
Deciding upon multistationarity corresponds to the general problem of determining whether an augmented vertical system has multiple positive zeros for some choice of parameter values. This is, in general, a challenging quantifier elimination problem. However, under the assumption of toric invariance, it simplifies to a sign condition.

The following proposition is a consequence and extension of \cite[§5]{Millan2012toricsteadystates} and \cite[§3]{Muller2015injectivity} (see also \cite[§2]{sadeghimanesh:multi}) from toric to torically invariant vertical systems.  
In particular, we provide a sufficient condition for the existence of multiple positive zeros that only relies on $\T_A$-invariance. 
For a set $P\subseteq\RR^n$, we let $\sign(P)$ denote the set of tuples in $\{0,+,-\}^n$ obtained by taking the sign of all elements of $P$ componentwise. 

\begin{proposition}[Multiple zeros]
\label{prop:multistationarity}
Let $F$ be a vertical system with defining matrices $M\in\ZZ^{n\times m}$  and $C\in\RR^{s\times m}$ of rank $s$ with $\ker(C)\cap\RR^m_{>0}\neq\varnothing$. Let  $L\in \RR^{(n-s)\times n}$.
Assume that $F$ is $\T_A$-invariant for a  matrix $A\in\ZZ^{d\times n}$ of rank $d$. Let $B\in \RR^{n\times (n-d)}$  be a full rank matrix whose columns form a basis for $\ker(A)$. 
For variables $\lambda=(\lambda_1,\dots,\lambda_{n})$ form the symbolic matrix
\[\Gamma_{\lambda}:=\begin{bmatrix}  B^\top \diag(\lambda) \\ L \end{bmatrix}\,.\]
Then for the following statements it holds that $\text{(i)}\Leftrightarrow\text{(ii)}\Leftrightarrow\text{(iii)}\Rightarrow\text{(iv)}$: 
\begin{enumerate}[label=(\roman*)]
  \item  $\ker(\Gamma_{\lambda})\neq \{0\}$ for some $\lambda\in\RR^n_{>0}$.
    \item $\sign(\ker(B^\top)) \cap \sign(\im(L)^\perp)\neq \{0\}$.
     \item The map $x\mapsto x^B$ is not injective on some coset $(x^*+\im(L)^\perp)\cap\RR^n_{>0}$ for $x^*\in\RR^n_{>0}$. 
    \item {$\#\VV_{>0}(C(\k\circ x^M) ,  Lx-b )\geq 2$ for some $\k\in \RR^m_{>0}$ and $b\in \RR^{n-s}$.   }
\end{enumerate}
Furthermore, if $F$ is $\T_A$-toric over $\RR_{>0}$, then also $\text{(iv)}\Rightarrow\text{(iii)}$.
\end{proposition}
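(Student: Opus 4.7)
The plan is to establish the equivalences (i)$\Leftrightarrow$(ii)$\Leftrightarrow$(iii) by linear algebra after passing to logarithmic coordinates, then to prove (iii)$\Rightarrow$(iv) by constructing an explicit parameter $\k$ from the assumed $\T_A$-invariance, and finally to obtain (iv)$\Rightarrow$(iii) under toricity essentially from the definition of toricity. The main technical delicacy is the passage between sign patterns of kernel vectors and actual positive points realising them, which underpins both (i)$\Leftrightarrow$(ii) and (ii)$\Leftrightarrow$(iii); the rest should be bookkeeping.

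For (i)$\Leftrightarrow$(ii), I would unpack $\Gamma_\alpha v = 0$ as the pair of conditions $\diag(\alpha) v \in \ker(B^\top)$ and $v \in \ker(L) = \im(L)^\perp$. Setting $w := \diag(\alpha) v$ with $\alpha \in \RR^n_{>0}$, the nonzero vectors $v$ and $w$ share the same sign pattern, which gives (i)$\Rightarrow$(ii). For the converse, given sign-matched $v \in \ker(L)$ and $w \in \ker(B^\top)$, I would define $\alpha \in \RR^n_{>0}$ componentwise by $\alpha_i := w_i/v_i$ when $v_i \neq 0$ and $\alpha_i := 1$ otherwise; then $\diag(\alpha) v = w \in \ker(B^\top)$ and so $v \in \ker(\Gamma_\alpha)$.

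For (ii)$\Leftrightarrow$(iii), I would exploit that $x^B = y^B$ in $\RR^n_{>0}$ is equivalent to $B^\top(\log x - \log y) = 0$, while the componentwise signs of $\log x - \log y$ and $x - y$ agree. Non-injectivity of $x \mapsto x^B$ on $(x^* + \ker(L)) \cap \RR^n_{>0}$ thus produces a nonzero $\mu \in \ker(B^\top)$ and a vector $u \in \ker(L)$ with $\sign(\mu) = \sign(u)$, which is (ii). For the converse, given such $\mu$ and $u$, I would construct $x, y \in \RR^n_{>0}$ by choosing $y_i := \lambda u_i / (e^{\mu_i}-1)$ for a suitable common $\lambda > 0$ at indices with $\mu_i \neq 0$ (positive by sign-matching) and $y_i$ an arbitrary positive value when $\mu_i = 0$; setting $x := y \circ \exp(\mu)$ componentwise then yields $\log x - \log y = \mu$ and $x - y = \lambda u \in \ker(L)$.

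For (iii)$\Rightarrow$(iv), given a witnessing pair $x \neq y$ for (iii), I would pick any $\omega \in \ker(C) \cap \RR^m_{>0}$ (available by hypothesis) and set $\k := \omega \circ x^{-M}$ together with $b := Lx$. Then $F_\k(x) = C\omega = 0$, and since $x^B = y^B$ places $y$ on the same $\T_A^{>0}$-coset as $x$ (cf.~\Cref{rmk:equivalent_descriptions_of_cosets}), the $\T_A$-invariance lifted to $\RR_{>0}$ via \Cref{thm:invariance_lifts_to_closure} forces $F_\k(y) = 0$, yielding two distinct positive zeros of the augmented system. Finally, for (iv)$\Rightarrow$(iii) under the toricity hypothesis, any two distinct positive zeros $x, y$ of the augmented system satisfy $Lx = Ly = b$ and hence $x - y \in \ker(L)$, while toricity forces $\VV_{>0}(F_\k)$ to equal a single $\T_A^{>0}$-coset, so $y \in x \circ \T_A^{>0}$ and hence $x^B = y^B$, giving non-injectivity of $z \mapsto z^B$ on $(x + \ker(L)) \cap \RR^n_{>0}$.
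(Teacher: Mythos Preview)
Your proposal is correct and essentially follows the same approach as the paper. The paper delegates the equivalences (i)$\Leftrightarrow$(ii)$\Leftrightarrow$(iii) entirely to \cite{Muller2015injectivity}, whereas you supply the direct elementary arguments (sign-matching via positive diagonal scaling for (i)$\Leftrightarrow$(ii), and the logarithm/exponential construction for (ii)$\Leftrightarrow$(iii)); these are precisely the arguments underlying the cited results, so this is a difference of exposition rather than method. For (iii)$\Rightarrow$(iv) and the toric converse (iv)$\Rightarrow$(iii), your argument coincides with the paper's: both reduce (iii) to the existence of distinct $x,y\in\RR^n_{>0}$ with $Lx=Ly$ and $y\in x\circ\T_A^{>0}$ via \Cref{rmk:equivalent_descriptions_of_cosets}, then use that any $x\in\RR^n_{>0}$ is a zero of $F_\k$ for some $\k$ (your explicit $\k=\omega\circ x^{-M}$) together with $\T_A$-invariance over $\RR_{>0}$, and finally invoke toricity for the reverse implication.
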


\begin{proof}
The equivalence of (i), (ii) and (iii) follows from (the proof of) Theorem 2.13, Proposition 3.9 and Corollary 3.11 in  \cite{Muller2015injectivity} (where $x^B$ refers to the monomial map with exponents given by the \emph{rows} of $B$). 
Statement (iii) holds if and only if there exist distinct $x,y\in\RR^n_{>0}$ with  $Lx=Ly$ and 
$x^{B}=y^B$, which by \Cref{rmk:equivalent_descriptions_of_cosets} is equivalent to the existence of distinct $x,y\in\RR^n_{>0}$ with   $Lx=Ly$ and $y\in x\circ\T_A$. 
By taking $\k$ such that $x\in \VV_{>0}(F_\k)$, $\T_A$-invariance gives the implication (iii) to (iv).  {If $F$ is $\T_A$-toric}, the reverse implication also holds. 
\end{proof}

\begin{remark}
 When $d=n-s$, the matrix $\Gamma_{\lambda}$ is square,  and condition (i) in \Cref{prop:multistationarity}  holds if and only if $\det(\Gamma_{\lambda})$ vanishes for some $\lambda\in\RR^n_{>0}$. This is true precisely if $\det(\Gamma_{\lambda})$, viewed as a polynomial in $\RR[\lambda]$, is either zero or has two nonzero terms with different signs \cite[Thm.~2.13]{Muller2015injectivity}. Condition (ii) can be checked by deciding the feasibility of a system of linear inequalities, see \cite[§4]{Muller2015injectivity}. 
\end{remark}

\smallskip
\noindent
\textit{Absolute concentration robustness.} ACR means that  the concentration $x_i$ at steady state is independent from initial conditions of the system. Because of this, ACR is believed to be a mechanism that contributes to the remarkable robustness many biological systems display to changes in their environment \cite{Shinar2010structural}.

The problem of ACR has been studied with various algebraic techniques   \cite{Millan2011thesis,KarpPerezDasguptaDickensteinGunawardena2012,Pascualescudero2022local,ACR:manyauthors,feliu:dimension}. 
In general, this requires determining whether the elimination ideal $I(\VV_{>0}(F_\k))\cap\RR[x_i^{\pm}]$   has a linear polynomial {$x_i-c_\k$ for some $c_\k\in\RR_{>0}$} for all $\k$, 
which is a highly challenging computational problem \cite{ACR:manyauthors}.

A weaker notion is that of \term{local ACR}, where one requires $\VV_{>0}(F_\k)$ being contained in finitely many  translates of the $i$th coordinate hyperplane  \cite{Pascualescudero2022local}, corresponding to $I(\VV_{>0}(F_\k))\cap\RR[x_i^\pm]$ 
being nonzero for all $\k\in\RR^m_{>0}$.  A simpler problem is to decide whether local ACR arises generically in $\k$, which can be addressed using the ideal generated by $F$, see \cite{feliu:dimension}.

Under the hypothesis of $\T_A$-invariance,   constant coordinates in $\VV_{>0}(F_\k)$ can be easily read from the matrix $A$, as the following result formalizes. 
 
\begin{proposition}[Constant coordinates]\label{prop:acr}
Let $F$ be a vertical system with defining matrices $M\in\ZZ^{n\times m}$ and $C\in\RR^{s\times m}$ of rank $s$    with $\ker(C)\cap\RR^m_{>0}\neq\varnothing$. Assume that $F$ is $\T_A$-invariant  for a  matrix $A\in\ZZ^{d\times n}$ of rank $d$. 
The following holds: 
\begin{enumerate}[label=(\roman*)]
    \item If  for all $\k\in \RR^m_{>0}$, $\VV_{>0}(F_\k)$ is contained in a finite union of translates  of the coordinate hyperplane $\{x_i=0\}$, then $A_i$ is zero.
    
    \item If  $F$ is locally $\T_A$-toric over $\RR_{>0}$, then  the converse of (i) holds.
       \item Assume $F$ is $\T_A$-toric over $\RR_{>0}$. Then  $\VV_{>0}(F_\k)$ is  contained in  a  translate of the coordinate hyperplane $\{x_i=0\}$ for all $\k\in \RR^m_{>0}$ if and only if $A_i$ is zero.
    
    \end{enumerate}
\end{proposition}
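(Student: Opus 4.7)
My plan for proving this proposition is to use the description of cosets from \Cref{def:postorus} and \Cref{rmk:equivalent_descriptions_of_cosets}, namely that $\alpha\circ\T_A^{>0}=\{\alpha\circ t^A:t\in\RR^d_{>0}\}$, so that the $i$-th coordinate of a point in a coset depends on $t$ only through $t^{A_i}$, where $A_i$ denotes the $i$-th column of $A$.

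For (i), I would begin by invoking \Cref{lem:positive_kernel_nonempty} to select some $\k^*\in\RR^m_{>0}$ with $\VV_{>0}(F_{\k^*})\neq\varnothing$, and pick an $x\in\VV_{>0}(F_{\k^*})$. By $\T_A$-invariance (via \Cref{thm:invariance_lifts_to_closure}) together with the hypothesis, the set $\{x_i\, t^{A_i}:t\in\RR^d_{>0}\}$ must be contained in a finite set of positive reals $\{c_1,\ldots,c_\ell\}$. If $A_i\neq 0$, then the monomial map $t\mapsto t^{A_i}$ has image $\RR_{>0}$, so $\{x_i\, t^{A_i}:t\in\RR^d_{>0}\}=\RR_{>0}$, contradicting finiteness. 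Hence $A_i=0$.

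For (ii), suppose $A_i=0$ and $F$ is locally $\T_A$-toric over $\RR_{>0}$. Fix $\k\in\RR^m_{>0}$; if $\VV_{>0}(F_\k)=\varnothing$ the conclusion is trivial, so assume it is nonempty and write
\[\VV_{>0}(F_\k)=\bigsqcup_{j=1}^\ell\alpha_j\circ\T_A^{>0}\]
for some $\alpha_1,\ldots,\alpha_\ell\in\RR^n_{>0}$ with $\ell<\infty$. Since the $i$-th coordinate of $\alpha_j\circ t^A$ equals $(\alpha_j)_i\, t^{A_i}=(\alpha_j)_i$ when $A_i=0$, each coset $\alpha_j\circ\T_A^{>0}$ is contained in the affine hyperplane $\{x_i=(\alpha_j)_i\}$. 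Thus $\VV_{>0}(F_\k)\subseteq\bigcup_{j=1}^\ell\{x_i=(\alpha_j)_i\}$, a finite union of translates of $\{x_i=0\}$. In the $\T_A$-toric case we have $\ell=1$, so the union consists of a single hyperplane.

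There is no real obstacle here; the argument is essentially a direct unpacking of the parametrization $t\mapsto\alpha\circ t^A$ of each coset together with the observation that a nonzero column of $A$ forces the corresponding coordinate to range over all of $\RR_{>0}$ while a zero column forces it to be constant. The only subtle point is making sure the statement of (i) is nonvacuous, which is handled by the standing assumption $\ker(C)\cap\RR^m_{>0}\neq\varnothing$ and \Cref{lem:positive_kernel_nonempty}.
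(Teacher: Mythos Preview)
Your argument is correct and is precisely the natural one. The paper actually states \Cref{prop:acr} without proof, so there is nothing to compare against; your unpacking of the coset parametrization $t\mapsto\alpha\circ t^A$ and the dichotomy between $A_i=0$ (constant $i$-th coordinate on each coset) and $A_i\neq 0$ (surjective onto $\RR_{>0}$) is exactly what is needed, and your use of \Cref{lem:positive_kernel_nonempty} and \Cref{thm:invariance_lifts_to_closure} to ensure nonvacuity and invariance over $\RR_{>0}$ is appropriate.
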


\smallskip
\noindent
\textit{Steady state invariants.}
Steady state invariants may be used to discriminate between different  reaction networks that may be underlying a mechanism. The ideas behind this approach were introduced in  \cite{gunawardena-invariants,ManraiGunawardena2008} and further developed   for example in \cite{HarringtonHoThorneStumpf2012,KarpPerezDasguptaDickensteinGunawardena2012}.  

For a subset of indices $J=\{j_1,\dots,j_r\}\subseteq[n]$, which correspond to experimentally measurable concentrations, assume we have  
a sample of the observable variables at steady state, obtained from a fixed but unknown $\k$. Each sample point corresponds then to a point of $\VV_{>0}(F_\k)$. 

 Now assume 
there is a steady state invariant $f$ that only depends on 
$x_J=(x_{j_1},\dots,x_{j_r})$ and has support consisting of the monomials $x_J^{u_1},\dots,x_J^{u_p}$ for $u_1,\ldots,u_p\in\ZZ^r$. We can then test the hypothesis that the sample points come from $\VV_{>0}(F_\k)$ by statistically checking whether the image of the sample under the map
$\hat{x} \mapsto (\hat{x}^{u_1},\dots,\hat{x}^{u_p})$ lies on a hyperplane in $\RR^p$ through the origin; see \Cref{fig:crnt_applications}(c). 
This is a parameter-free method, that does not require knowing $\k$ nor the coefficients of the invariant $f$, but instead only its support.
 
The goal is thus to find a tuple 
of monomials $(x_J^{u_1},\dots,x_J^{u_p})$ that constitutes the support of a polynomial in $I(\VV_{>0}(F_\k))\cap\RR[x_J^\pm]$ for all $\k\in \Z_{>0}$. 
A common method is to study the elimination ideal
$\langle F \rangle \cap\RR(\k)[x_J^\pm]$ with Gröbner bases methods. However, this will give polynomials vanishing on the entire complex varieties $\VV_{\CC^*}(F_\k)$, and hence might miss useful invariants that only vanish on the positive parts (cf. \Cref{ex:missed_invariants} below).
 
Under the assumption of $\T_A$-invariance, the analysis simplifies substantially to studying the kernel of the submatrix $A_J$ consisting of the columns of $A$ with index in $J$.
The key observation is that 
$I(\alpha\circ \T_A^{>0})=\langle x^{u}-\alpha^u :u\in\ker_{\ZZ}(A)\rangle$, 
from which we obtain
\begin{equation}\label{eq:elimination}
I(\alpha\circ \T_A^{>0})\cap\RR[x_J^\pm]=I(\alpha_J\circ\T_{A_J}^{>0})=\big\langle x_J^{u}-\alpha_J^u :u\in\ker_{\ZZ}(A_J)\big\rangle\, . 
\end{equation}
Hence, if $\VV_{>0}(F_\k)$ is  a disjoint union of 
$\T_A^{>0}$-cosets, then  $I(\VV_{>0}(F_\k))\cap\RR[x_J^\pm]$ is the intersection of one ideal of the form \eqref{eq:elimination} per coset.
This gives rise to the following result (which, incidentally, recovers \Cref{prop:acr} about ACR when $J$ is a singleton).

\begin{proposition}[Polynomial relations]
\label{prop:ssinvariants}
Let $F$ be a vertical system that is $\T_A$-invariant for a matrix $A\in\ZZ^{d\times n}$ of rank $d$ and let $J\subseteq [n]$. Then the following holds:
\begin{enumerate}[label=(\roman*)]
    \item If $I(\VV_{>0}(F_\k))\cap\RR[x_J^\pm]\neq \{0\}$ for some $\k\in\RR^m_{>0}$, then $\ker(A_J)\neq \{0\}$.
    \item Suppose $\ker(A_J)\neq\{0\}$. Then  $I(\VV_{>0}(F_\k))\cap\RR[x_J^\pm]\neq \{0\}$ for any $\k$ such that $\VV_{>0}(F_\k)$ is locally $\T_A$-toric over $\RR_{>0}$. 
\end{enumerate}
Under the assumption that $F$ is $\T_A$-toric and that $\Z_{>0}\neq\varnothing$, the following additional points hold:
\begin{itemize}
    \item $I(\VV_{>0}(F_\k))\cap\RR[x_J^\pm]\neq \{0\}$ for all $\k\in \Z_{>0}$ if and only if $\ker(A_J)\neq\{0\}$. 
    \item For all nonzero vectors $u\in \ker_{\ZZ}(A_J)$  and $\k\in \Z_{>0}$, $I(\VV_{>0}(F_\k))\cap\RR[x_J^\pm]$ contains a binomial with monomials $x_J^u$ and $1$.  
\end{itemize}
\end{proposition}

Recall that the minimal nonempty subsets $J\subseteq [n]$ for which $\ker(A_J)\neq\{0\}$ are precisely the circuits of the column matroid $\mathcal{M}_A$. 

For steady state systems that are $\T_A$-toric, the model discrimination idea outlined above reduces to checking statistically the hypothesis that the monomial $x_J^u$ is constant at the sample points for appropriate nonzero vectors $u\in\ker_{\ZZ}(A_J)$. If the system is \emph{locally} $\T_A$-toric and $K$ is an upper bound on the number of cosets (e.g., the mixed volume of the coset counting system), the hypothesis to test is instead whether $x_J^u$ attains at most $K$ distinct values at the sample.
For the best numerical and statistical performance for noisy data, one should use vectors in $\ker_{\ZZ}(A_J)$ of small length, found systematically via, e.g., the Lenstra–Lenstra–Lovász algorithm \cite{LenstraLenstraLovasz1982}.

\smallskip

We end the section by studying multistationarity, ACR and invariants for three examples.

\begin{example}\label{ex:IDH_multi}
\Cref{ex:IDH}, which we know is toric, corresponds to the network displayed in \Cref{fig:networks}(a). We apply \Cref{prop:multistationarity} with the following matrix of conservation laws $L$ and matrix $B$ whose columns form a basis of $\ker(A)$ for $A$ as in   \Cref{ex:IDH}:
\[L= {\small\begin{bmatrix} 1 & 0 & 1 & 0 & 1\\ -2 & 1 & -1 & 1 & 0\end{bmatrix}} 
\quad\text{and}\quad 
B={\small\begin{bmatrix} -1 & 0 & -1\\ -1 & 0 & -1\\ 0 &0 & 1\\ 0 &1 & 0\\ 1&0&0\end{bmatrix}} .\]
This gives
\[\det(\Gamma_{\lambda})=\det\left[\begin{smallmatrix}  -\lambda_{{1}}&-\lambda_{{2}}&0&0&\lambda_{{5}}
\\ 0&0&0&\lambda_{{4}}&0\\ -\lambda_
{{1}}&-\lambda_{{2}}&\lambda_{{3}}&0&0 \\
1&0&1&0&1 \\
-2&1&-1&1&0
\end{smallmatrix}\right]=-\lambda_{{1}}\lambda_{{3}}\lambda_{{4}}-\lambda_{{1}}\lambda_{{4}}\lambda_{
{5}}-2\,\lambda_{{2}}\lambda_{{3}}\lambda_{{4}}-\lambda_{{2}}\lambda_{{4}}\lambda_{{5}}-\lambda_{{3}}\lambda_{{4}}\lambda_{{5}}\,.
\]
As this determinant does not vanish for $\lambda\in \RR^5_{>0}$, condition (i) in \Cref{prop:multistationarity} holds, and hence the augmented system in (iv) does not have multiple positive zeros.

As the  fourth column of $A$ is zero, the network displays ACR with respect to $X_4$ by  \Cref{prop:acr}(iii).
Suppose now that the index set of the observable variables is $J=\{1,2,3\}$.  Since $u=(1,1,-1)\in\ker_{\ZZ}(A_J)$, it follows from \Cref{prop:ssinvariants} that we can assess the validity of the model by checking whether $x^u=x_1x_2x_3^{-1}$ 
has a constant value at a sample of points. Since $J$ is a circuit of $\mathcal{M}_A$, there is no invariant that involves a proper subset of these variables. 
\end{example}

\begin{example}\label{ex:calcium_network}
Consider the following network  studied in \cite[Eq. 2]{Straube2013reciprocal}:
\begin{align*}
X_1+X_2 &\ce{<=>} X_3 \ce{->} X_2+X_4 & X_7+X_8 &\ce{<=>} X_2 \\
X_4+X_5 &\ce{<=>} X_6 \ce{->} X_1+X_5 & X_5+X_8 &\ce{<=>} X_9\, .
\end{align*}
We apply \Cref{alg:summary} 
and conclude that the steady state system is $\T_A$-toric with 
\[A={\small\begin{bmatrix} 1 & 0 & 1 & 1 & 0 & 0 & 0 & 1 & 0\\
-1 & 1 & 0 & 0 & 1 & 0 & 0 & 0 & 0\\
-1 & 1 & 0 & 0 & 0 & 1 & 0 & 0 & 1\\
1 & 0 & 1 & 0 & 0 & 0 & 1 & 1 & 1
\end{bmatrix}}.\]
In particular, it passes the injectivity test. (It also satisfies the conditions in \Cref{prop:constant_number_of_cosets}, and a certified numerical computation reveals that the number of cosets is 1.) 
By \Cref{prop:acr}, the network does not have ACR. 
The polynomial $\det(\Gamma_\lambda)$ in \Cref{prop:multistationarity} has both positive and negative coefficients and hence the network has the capacity for multistationarity. 
{For $J=\{3,8\}$, $(1,-1)\in\ker_{\ZZ}(A_J)$ and hence  $x_3x_8^{-1}$ is constant on  $\VV_{>0}(F_\k)$ for every $\k\in\RR^{10}_{>0}$.}
\end{example}

\begin{example}
\label{ex:missed_invariants}
Consider the vertical system $F$ from \eqref{eq:triangle}, which is $\T_A$-toric over $\RR_{>0}$ for $A=[\,2\:\:3\,]$ and  corresponds to the network in \Cref{fig:networks}(b). 
A simple calculation shows that the network does not  have ACR nor the capacity for multistationarity. By \Cref{prop:ssinvariants},  $I(\VV_{>0}(F_\k))$ contains a binomial invariant with supports $x_1^{3}x_2^{-2}$ and $1$ for all $\k\in\RR^4_{>0}$. However,  the ideal $\langle F\rangle$ in $\RR(\k)[x^\pm]$ contains no binomial. This simple example illustrates that  \Cref{prop:ssinvariants} allows us to find steady state invariants that are inaccessible from  the ideal of the steady state system.
\end{example}

\subsection{Network reduction and {toric invariance}}
\label{subsec:intermediates}
For a reaction network with mass-action kinetics, 
the search for toric invariance of the steady state system can be  simplified by removing  \emph{single-input intermediates}, a concept that we review now. 
For a more detailed presentation we refer to \cite{Feliu2013intermediates,sadeghimanesh2019grobner,sadeghimanesh:multi}. 

Given a network   with set of species $\S$,  a \term{choice of intermediates} is a partition $\S=\X\sqcup\Y$ of the set of species into a set of \term{non-intermediates} $\X$ and a set of \term{intermediates} $\Y$, with the following properties:
\begin{enumerate}[label=(\roman*)]
    \item Each species $Y\in\Y$ only appears in  complexes where the coefficients sum to 1.
    \item For each $Y\in\Y$ there exists a sequence of reactions 
    \begin{equation}\label{eq:sec_intermediates}
    c\to Y_1\to  \cdots\to Y_i\to Y \to Y_{i+1} \to \cdots \to Y_r \to c'\end{equation}
 with $Y_1,\ldots,Y_r\in\Y\setminus\{Y\}$ (there might be repetitions) and $c,c'$  are complexes in the non-intermediates $\X$. 
    \end{enumerate}
The non-intermediate complex $c$ in \eqref{eq:sec_intermediates} is called an \term{input complex} of $Y$. A \term{single-input intermediate} has by definition a unique input complex. 
Note that there might be several possible choices of intermediates for a given network. One of the key ideas in the theory of intermediates is that some properties of the network  are preserved in a simpler \term{reduced network} defined as follows: 
one removes the intermediates, and all reactions involving intermediates, and adds a reaction $c\to c'$ for every sequence of reactions as \eqref{eq:sec_intermediates}
\cite{Feliu2013intermediates}.

By letting $x,y$ denote the vectors of concentrations of the species in $\X$ and $\Y$ respectively, the key idea   is that condition (i) ensures that, with mass-action kinetics, the entries of $y$ appear linearly in the ODE system \eqref{eq:ODE}. Then condition (ii) ensures that the steady state system has a unique zero in $y$, which in addition is a polynomial in $x$ with  coefficients being rational functions in $\k$ with all coefficients positive. 
Plugging the expressions of $y$ at steady state  into the remaining ODE equations (for $x$),  one obtains the ODE system for the reduced network for a choice of rate constants given as a vector of rational functions $\varphi(\k)$ in the original rate constants.
When  $Y_i$ is a single-input intermediate, the expression   takes the form $y_i=\psi_i(\k) x^{c}$, where $c$ is the vector of coefficients of the unique input of $Y_i$.  

As a convention, we order the species such that the vector of concentrations is $(x,y)$, that is,  
so that the non-intermediates come before the intermediates. We use a tilde to denote quantities and objects that correspond to the reduced network.

\begin{proposition}
\label{prop:intermediates_with_input_1_and_toricity}
For a reaction network consider a choice of intermediates $\S=\X\sqcup\Y$  with $\X=\{X_1,\ldots,X_n\}$ and $\Y=\{Y_1,\ldots,Y_\ell\}$ consisting of single-input intermediates. Let $F$ be the steady state system. 
Let $B\in\ZZ_{\geq 0}^{n\times \ell}$ be the matrix where the $i$th column is the coefficient vector in $\X$ of the unique input complex of the $i$th intermediate. Then the following holds:
\begin{enumerate}[label=(\roman*)]
\item There are rational maps $\psi\from\RR_{>0}^m\to\RR^\ell_{>0}$ and $\varphi\from\RR^m_{>0}\to\RR^{\tilde{m}}_{>0}$   such that we have a bijection
$$\Phi_{\k}\from
\VV_{>0}(\tilde{F}_{\varphi(\k)}) \to \VV_{>0}(F_\k),\quad x\mapsto (x,\psi(\k)\circ x^B).$$ 

\item
If $\VV_{>0}(F_\k)$ is $\T_A$-invariant over $\RR_{>0}$ for $A\in \ZZ^{d\times (n+\ell)}$, then $A=[ \tilde{A}\, | \, \tilde{A} B]$ with $\tilde{A}\in\ZZ^{d\times n}$.

\item With the notation in (ii), $\VV_{>0}(\tilde{F}_{\varphi(\k)})$ 
is $\T_{\tilde{A}}$-invariant over $\RR_{>0}$  if and only if 
$\VV_{>0}(F_\k)$ is $\T_A$-invariant over $\RR_{>0}$. Furthermore,  $\Phi_{\k}$ descends to a bijection $$\VV_{>0}(\tilde{F}_{\varphi(\k)})/\T_{\tilde{A}}^{>0}\to \VV_{>0}(F_\k)/\T_{A}^{>0}.$$

\item If $\tilde{F}$ is (generically/locally) $\T_{\tilde{A}}$-toric over $\RR_{>0}$,  then $F$ is (generically/locally) $\T_A$-toric over $\RR_{>0}$. The reverse implication holds also if  $\varphi$ is surjective. 
\end{enumerate}
\end{proposition}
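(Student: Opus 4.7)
The plan is to proceed part-by-part, with the algebraic heart of the argument sitting in (i) and (ii); parts (iii) and (iv) will then follow as essentially formal corollaries. For (i), I will invoke the standard reduction for single-input intermediates: by property (i) of an intermediate choice, the intermediates enter the ODE system linearly in $y$, and the single-input hypothesis combined with property (ii) forces each intermediate steady-state equation to admit the unique positive solution $y_i = \psi_i(\k)\,x^{B_i}$ with $\psi_i$ a positive rational function in $\k$. Substituting these expressions into the remaining $n$ equations produces $\tilde F$ with rate constants given by a positive rational map $\varphi(\k)$, and the bijection $\Phi_\k$ is built directly from these substitutions.

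For (ii), I will decompose $A = [\,A_1\mid A_2\,]$ with $A_1\in\ZZ^{d\times n}$ and $A_2\in\ZZ^{d\times\ell}$. Fixing $\k$ with $\VV_{>0}(F_\k)\neq\varnothing$ and taking $(x,y)$ in this set, $\T_A$-invariance yields $(x\circ t^{A_1},\,y\circ t^{A_2})\in\VV_{>0}(F_\k)$ for every $t\in\RR^d_{>0}$. Applying the intermediate relation $y=\psi(\k)\circ x^B$ from (i) to both points gives
\[y\circ t^{A_2} \;=\; \psi(\k)\circ(x\circ t^{A_1})^B \;=\; y\circ t^{A_1 B},\]
so $t^{A_2-A_1 B}=\1$ for all $t\in\RR^d_{>0}$, forcing $A_2=A_1 B$; setting $\tilde A=A_1$ gives the claim.

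Part (iii) then drops out of (ii) via the equivariance identity
\[\Phi_\k(x\circ t^{\tilde A}) = \bigl(x\circ t^{\tilde A},\,\psi(\k)\circ x^B\circ t^{\tilde A B}\bigr) = \Phi_\k(x)\circ t^A,\]
which, since $\Phi_\k$ is a bijection, both descends to a bijection of cosets and transports invariance in either direction. For (iv), part (i) already yields $\Z_{>0}=\varphi^{-1}(\tilde\Z_{>0})$, and combined with the coset bijection from (iii), (local) toricity of $\VV_{>0}(\tilde F_{\varphi(\k)})$ transfers directly to $\VV_{>0}(F_\k)$. Nondegeneracy of $F$ over $\RR_{>0}$ will be deduced from that of $\tilde F$ via the rank condition \eqref{eq:nondeg_condition}, using the block relationship between the defining matrices induced by intermediate elimination. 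The ``generically'' variant is handled by noting that $\varphi$, being rational and defined on all of $\RR^m_{>0}$, pulls back Zariski open subsets of $\tilde\Z_{>0}$ to Zariski open subsets of $\Z_{>0}$; surjectivity of $\varphi$ guarantees these pullbacks are nonempty and, in the reverse direction, that every $\tilde\k\in\tilde\Z_{>0}$ admits a lift to some $\k\in\Z_{>0}$ to which the forward argument can be inverted.

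The main obstacle I anticipate is in part (iv): propagating the ``generically'' qualifier faithfully through the rational map $\varphi$ requires a careful Zariski-topological argument, and the direct verification of nondegeneracy of $F$ from that of $\tilde F$ will hinge on writing out the structural block form of the coefficient and exponent matrices explicitly.
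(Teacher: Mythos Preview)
Your approach to (i)--(iii) is essentially identical to the paper's: part~(i) is cited from the intermediates literature, part~(ii) is obtained by applying the relation $y=\psi(\k)\circ x^B$ to both a point and its $\T_A$-translate to force $A_2=A_1B$, and part~(iii) follows from the equivariance identity $\Phi_\k(x\circ t^{\tilde A})=\Phi_\k(x)\circ t^A$.

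For part~(iv), you are considerably more careful than the paper. The paper's entire proof of (iv) is the single line ``follows from (iii) as $\#(\VV_{>0}(\tilde F_{\varphi(\k)})/\T_{\tilde A}^{>0}) = \#(\VV_{>0}(F_\k)/\T_A^{>0})$,'' leaving the nondegeneracy requirement in the definition of (generic/local) toricity and the Zariski-topological handling of ``generically'' implicit. Your plan to verify nondegeneracy via the block structure of the defining matrices and to track Zariski-open sets through $\varphi$ is a reasonable way to make this rigorous, but you may be overestimating the difficulty: the coset-count equality from (iii) already does most of the work, and the nondegeneracy transfer can alternatively be read off from the equivalence (ii)$\Leftrightarrow$(iii) in \Cref{thm:characterization_local_toricity} once you know the coset counts agree and the rank condition $n=s+d$ is preserved under the reduction.
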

\begin{proof}
Statement (i) is shown in \cite{Feliu2013intermediates}, see also \cite{sadeghimanesh2019grobner,sadeghimanesh:multi}. 
For statement (ii), 
let $y\in \VV_{>0}(F_\k)$ and write it  as $y=\Phi_\k(x)$ for the unique $x\in \VV_{>0}(\tilde{F}_{\varphi(\k)})$.
By writing $A=[\tilde{A}\, |\, \tilde{A}' ]$ with $\tilde{A}\in\ZZ^{d\times n}$, we have
\[\Phi_{\k}(x)\circ t^A = (x,\psi(\k)\circ x^B)\circ ( t^{\tilde{A}}, t^{\tilde{A}'})
=  (x\circ t^{\tilde{A}} ,\psi(\k)\circ x^B \circ t^{\tilde{A}'}).\]
By hypothesis, $\Phi_\k(x)\circ t^A \in \VV_{>0}(F_\k)$, 
hence it belongs to the image of $\Phi_\k$. Therefore
\[ \psi(\k)\circ x^B \circ t^{\tilde{A}'} = \psi(\k) \circ (x\circ t^{\tilde{A}})^B = 
\psi(\k) \circ x^B \circ t^{\tilde{A}B}, \]
and as this holds for all $t\in \RR^d_{>0}$, we must have that $\tilde{A}' = \tilde{A}B$, giving (ii).

Statement (iii) is now a   consequence of the equality $
\Phi_{\k}(x\circ t^{\tilde{A}})=
\Phi_{\k}(x)\circ t^A\,.$
Finally, (iv) follows from  (iii) as $\#(\VV_{>0}(\tilde{F}_{\varphi(\k)})/\T_{\tilde{A}}^{>0}) = \#(\VV_{>0}(F_\k)/\T_{A}^{>0})$. 
\end{proof}

\begin{remark}\label{rmk:input1}
    Surjectivity of $\varphi$ in \Cref{prop:intermediates_with_input_1_and_toricity}(i) corresponds to the realization condition  being satisfied for single-input intermediates by \cite[Prop.~5.3]{sadeghimanesh:multi}. 
In loc. cit. several scenarios where this holds are given. In particular, it holds in the common scenario where intermediates appear in isolated motifs of the form $c\ce{<->} Y_1 \ce{<->} \dots \ce{<->} Y_\ell \ce{->} c'$, with $\ce{<->}$ being either $\ce{->}$ or $\ce{<=>}$. 
We conjecture that   $\varphi$ is surjective whenever all intermediates are single-input.  
\end{remark}
\Cref{prop:intermediates_with_input_1_and_toricity} has important practical consequences: the reduced network is smaller as it has both fewer variables and reactions. Hence the computational cost for checking {whether the system is toric} is lower, sometimes dramatically lower. 
Additionally, it might be the case that some criteria are inconclusive for the original network, but succeed for the reduced network. An example of this is given below in \Cref{ex:shinar-feinberg}, where the injectivity test from \Cref{thm:injectivity} fails for the original network but is passed for the reduced network. 

\begin{example}\label{ex:idh_reduced}
For \Cref{fig:networks}(a), one possible choice of intermediates is   $\X=\{X_1,X_2,X_3,X_4\}$ and $\Y=\{X_5\}$, and the only input complex of $X_5$ is $X_3 + X_4$ (so the matrix $B$ in \Cref{prop:intermediates_with_input_1_and_toricity} is $(0 \ 0 \ 1\ 1)^{\top}$ and $\varphi$ is surjective, see \Cref{rmk:input1}). The  reduced network is
\[
X_1 + X_2 \ce{<=>} X_3 \ce{->} X_1 + X_4 \qquad 
    X_3 + X_4 \ce{->} X_2 + X_3\,.\]
The maps $\varphi$, $\psi$ and $\Phi_{\tilde{\k}}$ from \Cref{prop:intermediates_with_input_1_and_toricity}
are
\[\varphi(\k) =\left(\kappa_1,\kappa_2,\kappa_3,\tfrac{\kappa_4\kappa_6}{\kappa_5+\kappa_6}\right), \qquad \psi(\k)= \tfrac{\kappa_4}{\kappa_5+\kappa_6}, \qquad  
\Phi_{\k}(x_1,\dots,x_4) = \Big(x_1,\ldots,x_4,\tfrac{\kappa_4}{\kappa_5+\kappa_6}x_3x_4\Big). \]
\Cref{alg:summary} tells us that the  steady state system $\tilde{F}$ of the reduced network is $\T_{\tilde{A}}$-toric, hence 
the original steady state system in \Cref{ex:IDH} is $\T_A$-toric, with
\[\tilde{A}=\begin{bmatrix}1 & 0 & 1 & 0\\0 & 1 & 1 & 0\end{bmatrix}\qquad\text{and}\qquad A=[\ \tilde{A}\ |\ \tilde{A}B\ ] = \begin{bmatrix}1 & 0 & 1 & 0 & 1\\0 & 1 & 1 & 0 & 1\end{bmatrix}\,.\]
This is in accordance with what we saw in \Cref{ex:IDH}.
\end{example}

\begin{example}\label{ex:shinar-feinberg}
The classical network from Shinar and Feinberg's work on ACR \cite{Shinar2010structural} contains three single-input intermediates. The original and reduced networks are respectively:
\[
\begin{array}{ccc}
\begin{array}{c}
X_1 \ce{<=>} X_2 \ce{<=>} X_3 \ce{->} X_4\\
X_4 + X_5 \ce{<=>} X_6 \ce{->} X_2 + X_7\\
X_3 + X_7 \ce{<=>} X_8 \ce{->} X_3 + X_5\\
X_1 + X_7 \ce{<=>} X_9 \ce{->} X_1 + X_5
\end{array}
& \qquad
&
\begin{array}{c}
X_1 \ce{<=>} X_2 \ce{<=>} X_3 \ce{->} X_4\\
X_4 + X_5 \ce{->} X_2 + X_7\\
X_3 + X_7 \ce{->} X_3 + X_5\\
X_1 + X_7 \ce{->} X_1 + X_5\rlap{\,.}
\end{array}
\end{array}
\]
Applying \Cref{alg:invariance} to the reduced network, we conclude that 
the steady state systems of these two networks are torically invariant with respect to the following   matrices, respectively:
\[A=\left[\begin{matrix}1&1&1&0&1&1&0&1&1\\0&0&0&1&-1&0&0&0&0\end{matrix}\right]\qquad\text{and}\qquad \tilde{A}=\left[ \begin{matrix} 1&1&1&0&1&0\\ 0&0&0&1
&-1&0\end{matrix}\right].\]
\Cref{alg:summary} tells us that $\tilde{F}$ is $\T_{\tilde{A}}$-toric, as it passes the injectivity test, and \Cref{prop:intermediates_with_input_1_and_toricity} allows us to conclude that $F$ is $\T_A$-toric. However, the original system $F$ does not pass the injectivity test for {being $\T_A$-toric}. 
\end{example}

\subsection{Case study: Networks from ODEbase}
\label{subsec:odebase}

To illustrate the applicability of our results for realistic networks, we have applied our algorithms to the networks from the repository of biological and biomedical models {BioModels} \cite{biomodels}, using the stoichiometric matrices and reactant matrices collected in the database {ODEbase} \cite{odebase2022}. 

In our analysis, we work under the assumption of mass-action kinetics for all models (regardless of the exact kinetic model registered in BioModels).
We have considered all 69 nonlinear networks in ODEbase that satisfy
\[m\leq 100\,, \qquad  n-\rk(N)>0\,, \qquad  \text{and}\qquad \ker(N)\cap\mathbb{R}^m_{>0}\neq\varnothing\,.\]
For each such network we have applied \Cref{alg:summary} to the steady state system. If the network had single-input intermediates, we computed a matrix $\tilde{A}$ with invariance for the reduced network, and attempted to prove {that the reduced network was toric}, before proceeding with Step~\ref{line:injectivity_test}.

{\samepage The Github repository of this paper contains the output of the computations for each of the analyzed networks. We here report some summarized data:
\begin{itemize}
    \item For 38 networks, we rule out  {that the system is (locally) toric}. 
    \item For 31 networks, we verify that {the system is locally toric}, and for 30 of them, we verify {that it is toric}.
    \item For the remaining network {that is locally toric} (\texttt{835}), none of our conditions for {being toric} are satisfied, and the mixed volume bound is 46 (but the steady state ideal is binomial for all positive rate constants and hence {it is toric}). 
    \item Out of the toric networks, seven are not covered by the Deficiency One Theorem, and two are verified to be non-binomial.
    \item We verify capacity for multistationarity for 2 networks, and preclude multistationarity for 27 of them. We verify local ACR for 15 networks, and ACR for 14 of them.
    \item In total, 56 of the investigated networks are quasihomogeneous with respect to all weights in the toric invariance lattice. Out of these, 14 have a trivial matroid partition.
\end{itemize}
}

\subsection{Other flavors of {toric structure}}
\label{subsec:other_flavors}
In this final section, we view our results in the context of some previous approaches to determine {toric structures} in reaction networks. In this subsection, the reaction networks are taken with mass-action kinetics, and hence $M$ is the reactant matrix. 

\smallskip
\noindent
\textit{Quasithermostatic networks and deficiency theory.}
A special case {where the toric structure} plays an important role in classical monostationarity results is when the vectors in the toric invariance lattice span the left kernel of the stoichiometric matrix. 
A network with some choice of rate constants is said to be \term{quasithermostatic} if the set of positive steady states is of the form $x^*\circ\T_L^{>0}$ for some $x^*\in\RR^n_{>0}$ and a conservation law matrix $L$ as in \eqref{eq:classes} \cite[§4]{Horn1972general}. In this case, monostationarity follows directly from \Cref{prop:coset_counting}.

An important sufficient condition for quasithermostaticity is that the network is \emph{complex-balanced} (or a \emph{toric dynamical system} in the language of \cite{Craciun2009toric}). A characterization for when this happens for all choices of positive rate constants is given by the Deficiency Zero Theorem \cite{Horn1972}. Another sufficient condition for quasithermostaticity for all rate constants is given by the Deficiency One Theorem \cite{Feinberg1995class}.

To connect our results to this body of work, we review and reprove some basic facts about quasithermostatic networks from the point of view of partitions and toric invariance. Similar statements appear in \cite{Horn1972, Horn1972general, Horn1974aspects, Feinberg1995class} and more recently in \cite[§3.1]{Boros2013thesis}.
 
In what follows, we let $r$ be the number of complexes of a given reaction network and $m$ the number of reactions. The \term{linkage classes} of a network are the connected components of the network digraph which has the complexes as vertices and the reactions as edges. This results in a \term{linkage class partition} of $[m] = \gamma_1 \sqcup \dots \sqcup \gamma_\ell$,  where two indices are in the same subset if the corresponding reactions belong to the same linkage class. 
A network is \term{weakly reversible} if all linkage classes are strongly connected, and the \term{deficiency} of the network is 
$\delta :=  r  - s - \ell \geq 0$.

\begin{proposition}\label{prop:complex}
    Consider a network with $\ell$ linkage classes, its steady state system $F$, and $L\in \RR^{(n-s)\times n}$ a matrix defining the stoichiometric compatibility classes. 
    \begin{enumerate}[label=(\roman*)]
        \item If the matroid partition  of the steady state system   is finer than the linkage class partition, then $F$ is $\T_L$-invariant. 
        \item Statement (i) holds if the network is connected.
        \item Statement (i) holds if there is a direct sum decomposition $\im(N)=\im(N_1)\oplus\cdots\oplus \im(N_\ell)$, where $N_i$ is the stoichiometric matrix of the $i$th linkage class. 
        
        \item If the network is weakly reversible with deficiency zero or satisfies the conditions of the deficiency one theorem from \cite{Feinberg1995class}, then $F$ is locally $\T_L$-toric.
        
        \item If $F$ is $\T_L$-invariant, then for fixed $\k\in \RR^m_{>0}$, the cardinality of $\VV_{>0}(F_{\k},Lx-b)$ does not depend on $b$.  
        In particular $F$ is $\T_L$-toric over $\RR_{>0}$ if and only if $F$ is locally $\T_L$-toric and the network does not have the capacity for multistationarity. 
        
    \end{enumerate}
    \end{proposition}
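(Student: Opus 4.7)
The plan is to reduce (i)--(iii) to \Cref{thm:characterization_of_invariance} and \Cref{lem:partition_computable_from_generating_set}, and then to obtain (iv) from these together with the Horn--Jackson theorem and Feinberg's Deficiency One Theorem, bridged to the coset language via \Cref{prop:coset_counting}.

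For (i), each row $a$ of $L$ annihilates $\im(N)$ by construction. When $j,j_0$ lie in a common linkage class, the reactants $M_j, M_{j_0}$ are connected in the undirected complex graph, so $M_j - M_{j_0}$ is a signed sum of columns of $N$ and lies in $\im(N)$. Hence $aM_j = aM_{j_0}$ whenever $j,j_0$ share a linkage class, and a fortiori whenever they share a matroid block, so \Cref{thm:characterization_of_invariance} places each row of $L$ in $\I$. Part (ii) is then immediate: a connected network has linkage class partition $\{[m]\}$, which every partition refines. For (iii), a dimension count shows that $\im(N)=\bigoplus_i\im(N_i)$ is equivalent to $\ker(N)=\bigoplus_i\ker(N_i)$, each summand viewed as supported on $\gamma_1,\ldots,\gamma_\ell$. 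Any circuit vector $v\in\ker(C)=\ker(N)$ then splits as $v=\sum_i v_i$ with $v_i\in\ker(C)$ and $\supp(v_i)\subseteq\gamma_i$, and minimality of support forces all but one $v_i$ to vanish. \Cref{lem:partition_computable_from_generating_set} then yields that the matroid partition refines $\{\gamma_1,\ldots,\gamma_\ell\}$, and (i) applies.

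For (iv), the key preliminary step is to verify the direct-sum hypothesis of (iii) in both scenarios. In the Deficiency One case this is assumed, since $\sum_i\delta_i=\delta$ is precisely $\sum_i s_i=s$. For weakly reversible deficiency-zero networks, the standard inequalities $0\leq\delta_i$ and $\sum_i\delta_i\leq\delta=0$ force each $\delta_i=0$ and hence the decomposition. Thus (iii) yields $\T_L$-invariance. To upgrade this to local toricity, I would invoke \Cref{prop:coset_counting}, which identifies $\#(\VV_{>0}(F_\k)/\T_L^{>0})$ with the number of positive zeros of $(F_\k,Lx-b)$ in a single stoichiometric compatibility class; this number is at most one by Horn--Jackson in the first case and by Feinberg's theorem in the second. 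Nondegeneracy of $F$ is automatic when $\Z_{>0}=\RR^m_{>0}$ (Horn--Jackson case), while in the deficiency one case one can extract it from (v)$\Rightarrow$(ii) of \Cref{thm:characterization_local_toricity}, using that nondegenerate positive steady states persist under small perturbations of $\k$. The main subtlety anticipated is precisely this final step: the classical theorems speak of \emph{linear} compatibility classes, whereas $\T_L$-cosets are \emph{multiplicative} translates, so \Cref{prop:coset_counting} (Birch's theorem) is what makes the translation between the two languages work.
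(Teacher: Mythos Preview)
Your arguments for (i)--(iii) are correct and essentially parallel the paper's, with one pleasant variation: for (i) you argue directly that $M_j-M_{j_0}\in\im(N)$ via a path in the complex graph, whereas the paper uses the factorization $N=YC_G$ and the fact that the left kernel of the incidence matrix $C_G$ is spanned by indicator vectors of linkage classes. Your path argument is more elementary and avoids introducing $C_G$; the paper's version is slightly more structural. For (iii) both routes coincide; your verification that $\im(N)=\bigoplus_i\im(N_i)\Leftrightarrow\ker(N)=\bigoplus_i\ker(N_i)$ via the dimension identity $\sum_i s_i=s$ is exactly what is needed.

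For (iv) you take a genuinely different route from the paper. The paper verifies the hypothesis of \Cref{prop:nondeg}, namely that $\rk(C\diag(w)M^\top)=s$ for \emph{all} $w\in\ker(C)\cap\RR^m_{>0}$, by citing the Jacobian analysis in Feinberg's book; \Cref{prop:nondeg} then gives local $\T_L$-toricity directly. You instead use the \emph{uniqueness} conclusions of the classical theorems: via \Cref{prop:coset_counting}, at most one steady state per compatibility class translates to at most one $\T_L^{>0}$-coset, so for $\k\in\Z_{>0}$ there is exactly one. This is a nice reinterpretation, and in fact yields $\T_L$-toricity rather than merely local toricity.

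The gap is in your nondegeneracy argument for the deficiency one case. Invoking (v)$\Rightarrow$(ii) of \Cref{thm:characterization_local_toricity} requires exhibiting a nonempty Euclidean open set of $\k$ for which $\VV_{>0}(F_\k)$ is locally toric; your parenthetical about ``nondegenerate positive steady states persisting'' is circular, since nondegeneracy is precisely what you are trying to establish. The clean fix is to observe that both theorems, in the weakly reversible form treated in \cite{Feinberg1995class}, guarantee \emph{existence} of positive steady states for all $\k\in\RR^m_{>0}$, so $\Z_{>0}=\RR^m_{>0}$; then nondegeneracy follows from \Cref{prop:vertical}(ii) by contraposition, exactly as you argue in the deficiency-zero case. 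With that correction your approach is complete and in fact delivers a sharper conclusion than stated.
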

\begin{proof}
Let $Y\in \RR^{n\times r}$ be the matrix whose columns are the coefficients of all complexes that appear in the network in some chosen order. The columns of $M$ are among the columns of $Y$. 
Let $C_G\in \ZZ^{r\times m}$ be the incidence matrix of the network seen as a directed graph: the entry $(i,j)$ is $1$, $-1$ or $0$, if the $i$th complex is on the right, left, or does not occur in the $j$th reaction, respectively.
Let $[m] = \gamma_1 \sqcup \dots \gamma_\ell$ be the  linkage class partition.
For each $k\in [\ell]$,  construct the vector $u_{k}\in \ZZ^{m}$ 
with 
$1$ for the indices   in $\gamma_k$   and  zero otherwise. 
 These vectors  generate $\ker(C_G^\top)$.

It is easy to see that $N=Y C_G$, hence $0 =L\,Y C_G$ by hypothesis, and the rows of $LY$ belong to the left-kernel of $C_G$. In particular columns of $L\,Y$ corresponding to   complexes
in the same linkage class are all equal.  It follows that a row $a$ of $L$
satisfies $a Y_i = a Y_j$ if $i,j\in \gamma_k$ for some $k$. Statement (i) now follows from \Cref{thm:characterization_of_invariance}, using that the matroid partition is finer than the linkage class partition. 

For (ii),  if the network is connected, then $\ell=1$, hence (i) holds. 
For (iii), the condition is equivalent to $\ker(N)=\ker(N_1)\oplus\cdots\oplus \ker(N_\ell)$. Hence, {a circuit of the column matroid of $N$
must be contained in a block} of the linkage class partition and (i) holds. 
For (iv), if the deficiency is zero, then $\ker(N)=\ker(C_G)$, see, e.g., \cite[Lem.~6.1.4]{Feinberg1995class}. After a suitable reordering of the complexes and reactions, $C_G$ is a block diagonal matrix, which gives that (iii) applies. 
Condition (iii) holds under the setting of the deficiency one theorem by hypothesis. 
We note also that the property in \Cref{prop:nondeg} holds for networks of deficiency zero and in the setting of the deficiency one theorem \cite[§15.2 and §17.1]{Feinberg2019foundations}, from where we conclude that $F$ is locally $\T_L$-toric.
For (v), we simply note that $(F_\k,Lx-b)$ is the coset counting system. 
\end{proof}

A natural generalization of complex-balancing is that the network gives rise to the same ODEs as a complex balanced network (which is called being \emph{disguised toric} in \cite{Brustenga2022disguised}). The set of rate constants for which the network is disguised toric is called the \emph{disguised toric locus}, and has rich dynamical and geometric properties (see, e.g., {\cite{BorosCraciunHenrikssonJinRojas2025} for a recent overview}). The methods developed in this paper give an easy-to-check \emph{necessary} condition for the disguised toric locus to have nonempty Euclidean interior, namely that \Cref{alg:invariance} returns a conservation law matrix $L$ and $n=s+d$. 

\begin{proposition}
If a network is disguised toric for rate constants in a nonempty Euclidean open set $U\subseteq\RR^m_{>0}$, then it is generically locally $\T_L$-toric,  where $L$ is a conservation law matrix. 
\end{proposition}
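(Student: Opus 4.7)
The plan is to extract toric invariance from the disguised toric hypothesis, lift it from the Euclidean open set $U$ to all of $\CC^*$ via \Cref{thm:invariance_lifts_to_closure}, and then apply \Cref{thm:characterization_local_toricity} to conclude generic local toricity. By the definition of disguised toricity used in \cite{Brustenga2022disguised,haque2022disguised}, for each $\k\in U$ there exists a complex balanced network producing the same mass-action ODE system as the given network at rate constants $\k$. Equality of the ODE systems implies equality of the positive steady state sets and equality of the stoichiometric subspaces, so the conservation law matrix $L$ is shared. Since complex balanced networks are quasithermostatic (cf. the discussion preceding \Cref{prop:complex}), $\VV_{>0}(F_\k)$ equals a single coset of $\T_L^{>0}$ for every $\k\in U$; in particular, each such set is nonempty and $\T_L$-invariant over $\RR_{>0}$.

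With an integer conservation law matrix $L\in\ZZ^{(n-s)\times n}$ of rank $n-s$ in hand (which exists because the left-kernel of the integer stoichiometric matrix $N$ admits an integer basis), I would next apply \Cref{thm:invariance_lifts_to_closure} to the Euclidean open parameter set $U$ to deduce that $F$ itself is $\T_L$-invariant over $\CC^*$. Moreover, $U\subseteq\Z_{>0}$ is Euclidean open, so $\Z_{>0}$ has nonempty Euclidean interior; by \Cref{prop:vertical} this forces $F$ to be nondegenerate over $\RR_{>0}$ (equivalently over $\CC^*$, since $\ker(C)\cap\RR^m_{>0}\neq\varnothing$). Setting $d:=\rk(L)=n-s$, the dimensional equality $s+d=n$ is automatic, so condition (ii) of \Cref{thm:characterization_local_toricity} is met. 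The implication (ii) $\Rightarrow$ (iii) of that theorem then gives precisely that $F$ is generically locally $\T_L$-toric over $\RR_{>0}$, as required.

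The step I expect to require the most care is the very first one: confirming that the precise formulation of ``disguised toric'' from \cite{Brustenga2022disguised,haque2022disguised} does indeed force coincidence of the positive zero loci of the two steady state systems and of the underlying stoichiometric subspaces (and hence of the conservation law matrix $L$). Both facts stem from the ODEs coinciding pointwise in $x$, but they must be unfolded carefully to bridge disguised toricity with the quasithermostatic property needed as input. Once that bridge is in place, the remainder of the argument is a direct assembly of tools already established in the paper.
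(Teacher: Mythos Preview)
Your overall strategy coincides with the paper's: extract $\T_L$-invariance on the open set $U$ from quasithermostaticity of the disguising network, lift via \Cref{thm:invariance_lifts_to_closure}, and conclude with \Cref{thm:characterization_local_toricity}. The gap is precisely where you flagged it, but the mechanism you propose for filling it is wrong. Equality of the ODE vector fields does \emph{not} imply equality of the stoichiometric subspaces: the span of $\{N(\k\circ x^M):x\in\RR^n_{>0}\}$ (the ``kinetic subspace'') can be a proper subspace of $\im(N)$, and a dynamically equivalent network may well have a smaller stoichiometric subspace sitting between the two. So from ODE equality alone you only get that the disguising network's conservation law matrix $L'$ annihilates $N(\k\circ x^M)$, not that $\row(L')=\row(L)$; consequently the coset you obtain is a $\T_{L'}^{>0}$-coset, and you cannot yet feed $A=L$ into \Cref{thm:invariance_lifts_to_closure}.

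The paper closes this gap with the nondegeneracy you establish in your second paragraph, but uses it \emph{before} rather than after the invariance step. Once $F$ is known to be nondegenerate, one passes to a nonempty open $V\subseteq U$ where all zeros of $F_\k$ are nondegenerate; there $\dim\VV_{>0}(F_\k)=n-s$, and the space of linear first integrals of the ODE equals $\row(L)$ (this is the content of the cited \cite[\S3.4]{feliu:dimension}). Since $\row(L')$ lies in that annihilator and $\rk(L')=\dim\T_{L'}^{>0}=\dim\VV_{>0}(F_\k)=n-s=\rk(L)$, one obtains $\row(L')=\row(L)$, i.e., $A_\k=L$. With that in hand, your remaining steps go through verbatim. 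In short: reorder your argument so that nondegeneracy is established first and then used to identify $L'$ with $L$; the claim that stoichiometric subspaces agree directly from ODE equality should be dropped.
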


\begin{proof}
For each $\k\in U$, the network being disguised toric means that $\VV_{>0}(N(\k\circ x^M))$ is  $\T_{A_\k}$-toric for $A_\k$ a full rank matrix such that $A_\k N(\k\circ x^M)=0$ for all $x\in \RR^n_{>0}$. 
Furthermore, since  complex balanced networks have  positive steady states,  $U\subseteq\Z_{>0}$,  and it follows from \Cref{prop:vertical} that the steady state system is nondegenerate. Hence, there is a nonempty open subset $V\subseteq U$ of rate constants for which the network is both nondegenerate and disguised toric. 
Nondegeneracy implies now that $A_\k=L$ for all $\k\in V$  (see \cite[§3.4]{feliu:dimension}) and we have quasithermostaticity. By \Cref{thm:invariance_lifts_to_closure}, it follows that we have $\T_L$-invariance for all $\k\in\RR^m_{>0}$, and \Cref{thm:characterization_generic_local_toricity} then gives that {the system is generically locally $\T_L$-toric}.
\end{proof}

\smallskip
\noindent
\textit{Networks with binomial steady state ideals.}
By \Cref{rmk:binomial_implies_toric}, a sufficient condition for  a network {being generically  toric}  is that the polynomial ideal $\langle F_\kappa\rangle$ is binomial over the field $\QQ(\kappa)$ of rational functions in the rate constants. 
In \cite{Millan2012toricsteadystates},  networks with binomial steady state ideals are said to have \emph{toric steady states}, and the authors give a sufficient condition for this to hold for all rate constants. 
{In what follows, we will revisit this condition from the point of view of the techniques developed in the present paper. 
In particular, we will see that both works involve partitions, and that the one used in \cite{Millan2012toricsteadystates} can be seen as a coarsening of the matroid partition, which is consistent with the fact that binomiality of $\langle F_\k\rangle$ is a stronger property than $\VV_{>0}(F_\k)$ admitting a monomial parametrization (cf. \Cref{rmk:binomial_implies_toric}).}

We begin by writing the steady state system in the form
\[F_\k = \Sigma_\k x^Y,\]
where $\Sigma_\k \in \QQ(\k)^{n \times p}$ is the coefficient matrix and $Y\in \ZZ^{n\times p}$ has exactly one column per reactant complex of the network (the matrix $\Sigma_{\k}$ in   \cite{Millan2012toricsteadystates} might have some additional zero columns, but these are irrelevant for the results under discussion). Note that $m\geq p$.

Condition 3.1 in  \cite{Millan2012toricsteadystates} asks for the existence of a basis $b^1,\dots,b^d \in \RR^{p}_{\geq 0}$ for $\ker(\Sigma_{\k})$ such that their supports $I_1,\dots,I_d$ form a partition of $[p]$. 
When this is the case, the system {is toric} with respect to the maximal-rank matrix $A$ such that $AY_i=AY_j$ whenever $i,j$ belong to the same subset $I_k$ (see  \cite[Thm.~3.11]{Millan2012toricsteadystates}). This construction resembles \Cref{thm:characterization_of_invariance}. 
To understand the connection, we need first to assume that the partition is independent of   $\k\in \RR^m_{>0}$. 
Then,  the vectors $b^1,\dots,b^d$ are rational functions in $\k$, and by multiplying by the denominators if necessary, we can assume they are polynomial and hence continuous functions in $ \RR^m_{\geq 0}$.

Let $\iota\colon [m] \rightarrow [p]$ where $\iota(i)$ is the index of the column of $Y$ that has $M_i$ as column. 
The hypothesis of \cite{Millan2012toricsteadystates} gives then that
\begin{equation} \label{eq:AB}
AM_i=AM_j \quad \text{if $\iota(i),\iota(j)\in I_k$ for some $k\in [d]$}.
\end{equation} 
Note that $\iota$ is surjective and $\iota^{-1}$ induces a partition of $[m]$.
The connection between \cite{Millan2012toricsteadystates} and this work stems from the fact that the matroid partition  is finer than that induced by  $\iota^{-1}$, which we show next.

Let $K_{\k}\in \RR^{m\times p}$ be the matrix such that $\k\circ x^M = K_{\k}x^Y$, and more explicitly, $(K_{\k})_{i, \iota(i)}=\kappa_i$ with all other entries equal to zero. It follows that 
 $\Sigma_{\k} = N K_{\k}\,$, and that $K_\k$ has rank $p$ for all $\k\in \RR^m_{>0}$. 
By construction, it holds that $v_{\k,j}:=K_{\k}\, b^j \in \ker(N)$ for all $\k\in \RR^m_{>0}$, and    $\supp(v_{\k,j})=\iota^{-1}(I_j)$.
By continuity, if some entries of $\k$ are set to zero, the  vector  $v_{\k,j}$ still belongs to $\ker(N)$. 

As $K_{\k} (1,\dots,1)^\top = \k$, any vector in $\ker(N)$ belongs to $\im(K_{\k})$ for some $\k\in \RR^m$. 
Hence
\[\ker(N)  = \bigcup_{\k\in \RR^m}
\im(K_{\k}) \cap \ker(N)  =\bigcup_{\k\in \RR^m} K_{\k} ( \ker(\Sigma_{\k}))  \,, \] 
where in the last equality we use that $K_{\k}$ has maximal column rank. 
Using that $b^1,\dots,b^d$ form a basis for $\ker(\Sigma_{\k})$, and that the vectors 
$v_{\k,j}$, $v_{\k,i}$ have disjoint support if $i\neq j$,
we obtain that the support of any minimal-support vector in $\ker(N)$ is contained in one of $\supp(v_{\k,j})=\iota^{-1} (I_j)$. 
This implies that the matroid partition  is finer than that induced by  $\iota^{-1}$  as desired.

\begin{example}
\label{ex:trianglenetwork_revisited2}
Consider the triangle network from \Cref{fig:networks}(b), with steady state system \eqref{eq:triangle}. It relates to the various notions  discussed in this section in the following way:
\begin{itemize}
    \item This network has a binomial steady state ideal (or \emph{toric steady states}) if and only if $\kappa_1=\kappa_2$. {In particular, Condition 3.1 of \cite{Millan2012toricsteadystates} is not satisfied.}
    \item  It follows from the matrix--tree theorem \cite[§2]{Craciun2009toric} that it has complex-balanced steady states (gives rise to a \emph{toric dynamical system}) if and only if  $\kappa_1\kappa_3(\kappa_1\kappa_4+\kappa_2\kappa_4)=(\kappa_3\kappa_4)^2$.
    \item The network satisfies the conditions of the deficiency one theorem. It is also dynamically equivalent to a complex-balanced network (is \emph{disguised toric}) for all rate constants (cf. \cite[Thm.~3.1]{Brustenga2022disguised}). Each of these observations gives that the network is quasithermostatic for all $\k\in\RR^m_{>0}$.
\end{itemize}
These observations fit with what we have already seen in \Cref{ex:trianglenetwork_revisited}, namely that the network is toric with respect to $A=L$.
\end{example}


\newlength{\bibitemsep}\setlength{\bibitemsep}{.08\baselineskip plus .05\baselineskip minus .08\baselineskip}
\newlength{\bibparskip}\setlength{\bibparskip}{0pt}
\let\oldthebibliography\thebibliography
\renewcommand\thebibliography[1]{
  \oldthebibliography{#1}
  \setlength{\parskip}{2\bibitemsep}
  \setlength{\itemsep}{\bibparskip}
}

\def\bibfont{\footnotesize}

\bibliographystyle{alpha} 
\newcommand{\etalchar}[1]{$^{#1}$}

\vspace{1em}

\filbreak 

{\small
\noindent \textsc{Elisenda Feliu}\\
\textsc{University of Copenhagen}\\
\url{efeliu@math.ku.dk}\\[0.5em]
\textsc{Oskar Henriksson}\\
\textsc{University of Copenhagen}\\
\textsc{Current address: Center for Systems Biology Dresden}\\
\textsc{Max Planck Institute of Molecular Cell Biology and Genetics}\\
\url{oskar.henriksson@mpi-cbg.de}
}

\end{document}